\providecommand{\U}[1]{\protect\rule{.1in}{.1in}}
\newtheorem{theorem}{Theorem}
\theoremstyle{plain}
\newtheorem{acknowledgement}{Acknowledgement}
\newtheorem{claim}{Claim}
\newtheorem{corollary}{Corollary}
\newtheorem{definition}{Definition}
\newtheorem{lemma}{Lemma}
\newtheorem{proposition}{Proposition}
\newtheorem{remark}{Remark}
\DeclareMathOperator{\Div}{div}
\numberwithin{equation}{section}
\numberwithin{theorem}{section}
\numberwithin{proposition}{section}
\numberwithin{remark}{section}
\numberwithin{definition}{section}
\numberwithin{lemma}{section}
\numberwithin{corollary}{section}
\numberwithin{example}{section}
\numberwithin{claim}{section}
\begin{document}
\title[Deterministic homogenization of elliptic equations]{Deterministic homogenization of elliptic equations with lower order terms}
\author{Renata Bunoiu}
\address{R. Bunoiu, IECL, CNRS UMR 7502, Universit\'{e} de Lorraine, 3, rue Augustin
Fresnel, 57073, Metz, France}
\email{renata.bunoiu@univ-lorraine.fr}
\urladdr{http://www.iecl.univ-lorraine.fr/\symbol{126}Renata.Bunoiu/}
\author{Giuseppe Cardone}
\address{G. Cardone, University of Sannio, Department of Engineering, Corso Garibaldi,
107, 84100 Benevento, Italy}
\email{giuseppe.cardone@unisannio.it}
\urladdr{http://www.ing.unisannio.it/cardone}
\author{Willi J\"{a}ger}
\address{W. J\"{a}ger, Interdisciplinary Center for Scientific Computing (IWR),
University of Heidelberg, Im Neuenheimer Feld 205, 69120 Heidelberg, Germany.}
\email{wjaeger@iwr.uni-heidelberg.de}
\author{Jean Louis Woukeng}
\address{J. L. Woukeng, Department of Mathematics and Computer Science, University of
Dschang, P.O. Box 67, Dschang, Cameroon}
\curraddr{J. L. Woukeng, Interdisciplinary Center for Scientific Computing (IWR),
University of Heidelberg, Im Neuenheimer Feld 205, Mathematikon 205, 69120
Heidelberg, Germany}
\email{jwoukeng@gmail.com}
\thanks{}
\date{October 30, 2019}
\subjclass[2000]{ 35B40, 46J10}
\keywords{Convergence rates, deterministic homogenization, sigma-convergence}

\begin{abstract}
For a class of linear elliptic equations of general type with rapidly
oscillating coefficients, we use the sigma-convergence method to prove the
homogenization result and a corrector-type result. In the case of asymptotic
periodic coefficients we derive the optimal convergence rates for the zero
order approximation of the solution with no smoothness on the coefficients, in
contrast to what has been done up to now in the literature. This follows as a
result of the existence of asymptotic periodic correctors for general
nonsmooth coefficients. The homogenization process is achieved through a
compactness result obtained by proving a Helmholtz-type decomposition theorem
in case of Besicovitch spaces.

\end{abstract}
\maketitle

\section{Introduction and the main results\label{sec1}}

In this work we are interested in the homogenization from both qualitative and
quantitative standpoints, of a family of general elliptic operators of the
type
\begin{equation}
\mathcal{P}_{\varepsilon}=-\operatorname{div}(A^{\varepsilon}\nabla
+V^{\varepsilon})+B^{\varepsilon}\nabla+a_{0}^{\varepsilon}+\mu\label{1.1}%
\end{equation}
where $\varepsilon>0,\mu\geq0$, $A^{\varepsilon}(x)=A(x/\varepsilon)$, (and a
similar definition for $V^{\varepsilon}$, $B^{\varepsilon}$ and $a_{0}%
^{\varepsilon}$). The coefficients $A$, $V$, $B\ $and $a_{0}$ are constrained
as follows:

\begin{itemize}
\item[(H1)] The symmetric matrix $A$ has entries in $L^{\infty}(\mathbb{R}%
^{d})$ ($d\geq2$) and there are $\alpha,\beta>0$ such that
\begin{equation}
\alpha\left\vert \xi\right\vert ^{2}\leq A(y)\xi\cdot\xi\leq\beta\left\vert
\xi\right\vert ^{2},\text{ a.e. }y\in\mathbb{R}^{d}\text{ and all }\xi
\in\mathbb{R}^{d}; \label{1.2}%
\end{equation}

\item[(H2)] $B,V\in L^{\infty}(\mathbb{R}^{d})^{d}$ and $a_{0}\in L^{\infty
}(\mathbb{R}^{d})$ with
\begin{equation}
\max\{\left\Vert B\right\Vert _{L^{\infty}(\mathbb{R}^{d})},\left\Vert
V\right\Vert _{L^{\infty}(\mathbb{R}^{d})},\left\Vert a_{0}\right\Vert
_{L^{\infty}(\mathbb{R}^{d})}\}\leq\alpha_{0} \label{1.3}%
\end{equation}
where $\alpha_{0}>0$ is given.
\end{itemize}

We further assume that all the above coefficients belong to a Besicovitch
space associated to an algebra with mean value in the sense given below in (H3):

\begin{itemize}
\item[(H3)] The functions $A=(a_{ij})_{1\leq i,j\leq d}$, $B=(b_{i})_{1\leq
i\leq d}$, $V=(v_{i})_{1\leq i\leq d}$ and $a_{0}$ are structured as follows:
$A\in B_{\mathcal{A}}^{2}(\mathbb{R}^{d})^{d\times d}$, $B,V\in B_{\mathcal{A}%
}^{2}(\mathbb{R}^{d})^{d}$ and $a_{0}\in B_{\mathcal{A}}^{2}(\mathbb{R}^{d})$.
\end{itemize}

In (H3), $\mathcal{A}$ is a given algebra with mean value on $\mathbb{R}^{d}$
(that is, a closed subalgebra of the $\mathcal{C}^{\ast}$-Banach algebra of
bounded uniformly continuous real-valued functions on $\mathbb{R}^{d}$ that
contains the constants, is translation invariant and is such that each of its
elements $u$ possesses a mean value
$M(u)=\mathchoice {{\setbox0=\hbox{$\displaystyle{\textstyle
-}{\int}$ } \vcenter{\hbox{$\textstyle -$
}}\kern-.6\wd0}}{{\setbox0=\hbox{$\textstyle{\scriptstyle -}{\int}$ }
\vcenter{\hbox{$\scriptstyle -$
}}\kern-.6\wd0}}{{\setbox0=\hbox{$\scriptstyle{\scriptscriptstyle -}{\int}$
} \vcenter{\hbox{$\scriptscriptstyle -$
}}\kern-.6\wd0}}{{\setbox0=\hbox{$\scriptscriptstyle{\scriptscriptstyle
-}{\int}$ } \vcenter{\hbox{$\scriptscriptstyle -$ }}\kern-.6\wd0}}\!\int
_{B_{R}}u(y)dy$ where we set once and for all
$\mathchoice {{\setbox0=\hbox{$\displaystyle{\textstyle
-}{\int}$ } \vcenter{\hbox{$\textstyle -$
}}\kern-.6\wd0}}{{\setbox0=\hbox{$\textstyle{\scriptstyle -}{\int}$ }
\vcenter{\hbox{$\scriptstyle -$
}}\kern-.6\wd0}}{{\setbox0=\hbox{$\scriptstyle{\scriptscriptstyle -}{\int}$
} \vcenter{\hbox{$\scriptscriptstyle -$
}}\kern-.6\wd0}}{{\setbox0=\hbox{$\scriptscriptstyle{\scriptscriptstyle
-}{\int}$ } \vcenter{\hbox{$\scriptscriptstyle -$ }}\kern-.6\wd0}}\!\int
_{K}=\frac{1}{\left\vert K\right\vert }\int_{K}$ for any measurable set
$K\subset\mathbb{R}^{d}$) and $B_{\mathcal{A}}^{2}(\mathbb{R}^{d})$ is the
generalized Besicovitch space defined as the closure of the algebra
$\mathcal{A}$ with respect to the seminorm $\left\Vert u\right\Vert
_{2}=(M(\left\vert u\right\vert ^{2}))^{\frac{1}{2}}$.

Assumption (H3) is crucial in the process of homogenization. Without such an
assumption, we can not take full advantage of the inner structure (such as the
mean value property which is fundamental) of the coefficients and proceed with
the homogenization process. In the remainder of the work, unless otherwise
stated, we will always assume implicitly that (H1), (H2) and (H3) are satisfied.

To the operator $\mathcal{P}_{\varepsilon}$ is associated the bilinear form
\begin{equation}
\mathcal{B}_{\varepsilon,\Omega}(u,v)=\int_{\Omega}(A^{\varepsilon}\nabla
u+V^{\varepsilon}u)\cdot\nabla vdx+\int_{\Omega}(B^{\varepsilon}\nabla
u+a_{0}^{\varepsilon}u+\mu u)vdx,\ \ u,v\in H^{1}(\Omega) \label{*0}%
\end{equation}
where $\Omega$ is a bounded open set in $\mathbb{R}^{d}$. Then in view of (H1)
and (H2), it holds that there exist constants $C_{i}=C_{i}(\alpha,\beta
,\alpha_{0},d)>0$ ($i=1,2$) and $\mu_{0}=\mu_{0}(\alpha,\beta,\alpha
_{0},d)\geq0$ such that for all $u,v\in H_{0}^{1}(\Omega)$,
\[
\left\vert \mathcal{B}_{\varepsilon,\Omega}(u,v)\right\vert \leq
C_{1}\left\Vert u\right\Vert _{H^{1}(\Omega)}\left\Vert v\right\Vert
_{H^{1}(\Omega)}\text{ and }C_{2}\left\Vert u\right\Vert _{H_{0}^{1}(\Omega
)}^{2}\leq\mathcal{B}_{\varepsilon,\Omega}(u,u)+\mu_{0}\left\Vert u\right\Vert
_{L^{2}(\Omega)}^{2}.
\]
It therefore follows that, under assumptions (H1) and (H2), and for each $f\in
L^{2}(\Omega)$ and $F\in L^{2}(\Omega)^{d}$, there exists (see \cite[Theorems
2 and 3, Chap. 6, Section 6.2]{Evans}) a unique weak solution $u_{\varepsilon
}\in H_{0}^{1}(\Omega)$ to the equation
\begin{equation}
\mathcal{P}_{\varepsilon}u_{\varepsilon}=f+\operatorname{div}F\text{ in
}\Omega\label{*10}%
\end{equation}
whenever $\mu\geq\mu_{0}$. The solution $u_{\varepsilon}$ satisfies the
estimate
\begin{equation}
\sup_{\varepsilon>0}\left\Vert u_{\varepsilon}\right\Vert _{H_{0}^{1}(\Omega
)}\leq C(\left\Vert f\right\Vert _{L^{2}(\Omega)}+\left\Vert F\right\Vert
_{L^{2}(\Omega)}) \label{*11}%
\end{equation}
where $C=C(\alpha,\beta,\alpha_{0},d)>0$. We will assume for the rest of the
work that $\mu\geq\mu_{0}$. The following theorem is one of the main results
of the work. It is concerned with the qualitative theory for (\ref{1.1}).

\begin{theorem}
\label{t1.1}Let $\Omega$ be a $\mathcal{C}^{0,1}$ bounded domain in
$\mathbb{R}^{d}$. Let the assumptions \emph{(H1)-(H3)} be satisfied. Let
$(u_{\varepsilon})_{\varepsilon>0}$ be the sequence of solutions to
\emph{(\ref{*10})}. There exists $u_{0}\in H_{0}^{1}(\Omega)$ such that as
$\varepsilon\rightarrow0$, one has $u_{\varepsilon}\rightarrow u_{0}$ weakly
in $H_{0}^{1}(\Omega)$ and strongly in $L^{2}(\Omega)$, and $u_{0}$ is the
unique solution to the problem
\begin{equation}
-\operatorname{div}(\widehat{A}\nabla u_{0}+\widehat{V}u_{0})+\widehat{B}%
\cdot\nabla u_{0}+\widehat{a}_{0}u_{0}+\mu u_{0}=f+\operatorname{div}F\text{
\ in }\Omega\label{1.4'}%
\end{equation}
where $\widehat{A}$, $\widehat{B}$, $\widehat{V}$ and $\widehat{a}_{0}$ are
the homogenized coefficients defined by
\begin{equation}
\widehat{A}=M\left(  A(I_{d}+\nabla_{y}\chi)\right)  \text{, }\widehat
{B}=M\left(  B(I_{d}+\nabla_{y}\chi)\right)  \text{, }\widehat{V}%
=M(A\nabla_{y}\chi_{0}+V\mathbf{)}\text{ and }\widehat{a}_{0}=M(B\cdot
\nabla_{y}\chi_{0}+a_{0}). \label{1.5'}%
\end{equation}
Here $I_{d}$ is the identity matrix of order $d$, $\chi=(\chi_{j})_{1\leq
j\leq d}\in B_{\#\mathcal{A}}^{1,2}(\mathbb{R}^{d})^{d}$ and $\chi_{0}\in
B_{\#\mathcal{A}}^{1,2}(\mathbb{R}^{d})$ are the unique solutions (up to an
additive constant) of the corrector problems
\begin{align}
-\operatorname{div}_{y}\left(  A(e_{j}+\nabla_{y}\chi_{j})\right)   &
=0\text{ in }\mathbb{R}^{d},\label{1.6'}\\
-\operatorname{div}_{y}(A\nabla_{y}\chi_{0}+V)  &  =0\text{ in }\mathbb{R}^{d}
\label{1.66'}%
\end{align}
respectively. If we let
\[
u_{1}(x,y)=\chi(y)\nabla u_{0}(x)+\chi_{0}(y)u_{0}(x)=\sum_{j=1}^{d}%
\frac{\partial u_{0}}{\partial x_{j}}(x)\chi_{j}(y)+\chi_{0}(y)u_{0}(x)
\]
and further assume that $u_{1}\in H^{1}(\Omega;\mathcal{A}^{1})$
($\mathcal{A}^{1}=\{v\in\mathcal{A}:\nabla_{y}v\in(\mathcal{A})^{d}\}$), then
as $\varepsilon\rightarrow0$,
\begin{equation}
u_{\varepsilon}-u_{0}-\varepsilon u_{1}^{\varepsilon}\rightarrow0\text{ in
}H^{1}(\Omega)\text{-strongly} \label{1.7'}%
\end{equation}
where $u_{1}^{\varepsilon}(x)=u_{1}(x,x/\varepsilon)$ for a.e. $x\in\Omega$.
\end{theorem}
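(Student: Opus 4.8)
The plan is to run the $\Sigma$-convergence (two-scale) method adapted to the algebra with mean value $\mathcal{A}$. First I would record the a priori bound \eqref{*11}: $\sup_\varepsilon\|u_\varepsilon\|_{H_0^1(\Omega)}<\infty$, so that, along a subsequence, $u_\varepsilon\rightharpoonup u_0$ in $H_0^1(\Omega)$ and, by the Rellich--Kondrachov theorem ($\Omega$ being bounded and Lipschitz), $u_\varepsilon\to u_0$ in $L^2(\Omega)$. The decisive compactness input is the Besicovitch version of the two-scale compactness theorem, which rests on the Helmholtz-type decomposition announced in the abstract: passing to a further subsequence there is $u_1\in L^2(\Omega;B_{\#\mathcal{A}}^{1,2}(\mathbb{R}^d))$ with $u_\varepsilon\to u_0$ and $\nabla u_\varepsilon\to\nabla u_0+\nabla_y u_1$ weakly in the $\Sigma$ sense. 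Write $M_y$ for the mean value acting on the fast variable.

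Next I would pass to the limit in the weak form $\mathcal{B}_{\varepsilon,\Omega}(u_\varepsilon,v_\varepsilon)=\int_\Omega fv_\varepsilon\,dx-\int_\Omega F\cdot\nabla v_\varepsilon\,dx$ against oscillating test functions $v_\varepsilon=\psi+\varepsilon\psi_1\,\phi(\cdot/\varepsilon)$, $\psi,\psi_1\in\mathcal{C}_0^\infty(\Omega)$, $\phi\in\mathcal{A}^1$, for which $\nabla v_\varepsilon=\nabla\psi+\varepsilon(\nabla\psi_1)\phi^\varepsilon+\psi_1(\nabla_y\phi)^\varepsilon$. Using the $\Sigma$-convergence of $\nabla u_\varepsilon$, the strong $L^2$ convergence of $u_\varepsilon$ (which disposes of the $V^\varepsilon u_\varepsilon$, $B^\varepsilon\!\cdot\!\nabla u_\varepsilon$, $a_0^\varepsilon u_\varepsilon$ and $\mu u_\varepsilon$ contributions), and (H3), I obtain the two-scale limit problem
\begin{multline*}
\int_\Omega M_y\big[(A(\nabla u_0+\nabla_y u_1)+Vu_0)\cdot(\nabla\psi+\psi_1\nabla_y\phi)\big]\,dx\\
+\int_\Omega M_y\big[B\!\cdot\!(\nabla u_0+\nabla_y u_1)+a_0u_0+\mu u_0\big]\psi\,dx=\int_\Omega f\psi\,dx-\int_\Omega F\cdot\nabla\psi\,dx ,
\end{multline*}
valid by density for all $(\psi,v_1)\in H_0^1(\Omega)\times L^2(\Omega;B_{\#\mathcal{A}}^{1,2}(\mathbb{R}^d))$. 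Taking $\psi=0$ gives the cell relation $-\operatorname{div}_y(A(\nabla u_0(x)+\nabla_y u_1(x,\cdot))+Vu_0(x))=0$ in $\mathbb{R}^d$; by linearity and the unique solvability (modulo constants) of \eqref{1.6'}--\eqref{1.66'} this forces $u_1(x,\cdot)=\sum_j\partial_{x_j}u_0(x)\chi_j+u_0(x)\chi_0$, so $\nabla u_0+\nabla_y u_1=\sum_j\partial_{x_j}u_0\,(e_j+\nabla_y\chi_j)+u_0\nabla_y\chi_0$. Inserting this into the $v_1=0$ part of the limit problem and taking mean values yields $M_y[A(\nabla u_0+\nabla_y u_1)+Vu_0]=\widehat A\nabla u_0+\widehat V u_0$ and $M_y[B\!\cdot\!(\nabla u_0+\nabla_y u_1)+a_0u_0]=\widehat B\!\cdot\!\nabla u_0+\widehat{a}_0 u_0$ with the coefficients \eqref{1.5'}; hence $u_0$ solves \eqref{1.4'} weakly. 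Since $\widehat A$ is uniformly elliptic (from the cell equation, $M(\nabla_y(\xi\cdot\chi))=0$ and Jensen's inequality) and $\widehat B,\widehat V,\widehat{a}_0$ are bounded by constants depending only on $\alpha,\beta,\alpha_0,d$, problem \eqref{1.4'} has at most one solution, so $u_0$ is independent of the subsequence and the whole family converges.

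For the corrector \eqref{1.7'} I would use the energy method. Set $\Phi_\varepsilon=\nabla u_0+(\nabla_y u_1)^\varepsilon$; the standing assumption $u_1\in H^1(\Omega;\mathcal{A}^1)$ makes $\Phi_\varepsilon$ admissible, so $A^\varepsilon\Phi_\varepsilon$ $\Sigma$-converges \emph{strongly} to $A(\nabla u_0+\nabla_y u_1)$. From the equation, $\mathcal{B}_{\varepsilon,\Omega}(u_\varepsilon,u_\varepsilon)=\int_\Omega fu_\varepsilon\,dx-\int_\Omega F\cdot\nabla u_\varepsilon\,dx\to\int_\Omega fu_0\,dx-\int_\Omega F\cdot\nabla u_0\,dx$; expanding the left side, passing to the limit in the lower and zero order terms (strong $L^2$ convergence of $u_\varepsilon$, weak $\Sigma$-convergence of $\nabla u_\varepsilon$), and then testing the limit problem against the admissible pair $(u_0,u_1)$, I get
\begin{equation*}
\lim_{\varepsilon\to0}\int_\Omega A^\varepsilon\nabla u_\varepsilon\cdot\nabla u_\varepsilon\,dx=E:=\int_\Omega M_y\big[A(\nabla u_0+\nabla_y u_1)\cdot(\nabla u_0+\nabla_y u_1)\big]\,dx .
\end{equation*}
Together with $\int_\Omega A^\varepsilon\nabla u_\varepsilon\cdot\Phi_\varepsilon\,dx\to E$ and $\int_\Omega A^\varepsilon\Phi_\varepsilon\cdot\Phi_\varepsilon\,dx\to E$ this gives $\int_\Omega A^\varepsilon(\nabla u_\varepsilon-\Phi_\varepsilon)\cdot(\nabla u_\varepsilon-\Phi_\varepsilon)\,dx\to E-2E+E=0$, whence by \eqref{1.2}, $\nabla u_\varepsilon-\nabla u_0-(\nabla_y u_1)^\varepsilon\to0$ in $L^2(\Omega)^d$. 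Since $\nabla(\varepsilon u_1^\varepsilon)=\varepsilon(\nabla_x u_1)^\varepsilon+(\nabla_y u_1)^\varepsilon$ and $\|\varepsilon(\nabla_x u_1)^\varepsilon\|_{L^2(\Omega)}+\|\varepsilon u_1^\varepsilon\|_{L^2(\Omega)}\le 2\varepsilon\|u_1\|_{H^1(\Omega;\mathcal{A}^1)}\to0$, combining with $u_\varepsilon\to u_0$ in $L^2(\Omega)$ gives $u_\varepsilon-u_0-\varepsilon u_1^\varepsilon\to0$ strongly in $H^1(\Omega)$.

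The main obstacle is the gradient compactness, i.e.\ identifying the weak $\Sigma$-limit of $\nabla u_\varepsilon-\nabla u_0$ as a generalized gradient $\nabla_y u_1$ with $u_1\in B_{\#\mathcal{A}}^{1,2}(\mathbb{R}^d)$. Unlike the periodic case, where the decomposition of periodic $L^2$-vector fields into potential and solenoidal parts is classical, in Besicovitch spaces the closedness of the space of generalized gradients and the characterization of its orthogonal complement require a dedicated Helmholtz-type decomposition theorem, proven beforehand. A secondary delicate point is the corrector argument, which genuinely needs $u_1\in H^1(\Omega;\mathcal{A}^1)$ for $A^\varepsilon\Phi_\varepsilon$ to $\Sigma$-converge \emph{strongly}, together with the exact matching of $\lim_\varepsilon\int_\Omega A^\varepsilon\nabla u_\varepsilon\cdot\nabla u_\varepsilon\,dx$ with the energy of the limit problem evaluated on the corrector pair.
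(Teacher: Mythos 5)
Your proposal is correct and follows essentially the same route as the paper: the a priori bound plus the Besicovitch two-scale compactness (resting on the Helmholtz-type decomposition) to extract $(u_0,u_1)$, oscillating test functions to derive the two-scale limit problem, identification of $u_1$ through the cell problems \eqref{1.6'}--\eqref{1.66'}, uniqueness of the homogenized solution for whole-sequence convergence, and the energy method with the strongly $\Sigma$-converging field $\nabla u_0+(\nabla_y u_1)^\varepsilon$ for the corrector estimate. The only cosmetic difference is that you separate the harmless $\varepsilon(\nabla_x u_1)^\varepsilon$ term from the test field at the end, whereas the paper keeps it inside $\nabla r_\varepsilon$ throughout; both reduce to the same cancellation via the global homogenized identity and density.
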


In (\ref{1.4'}), it is easy to see that the coefficients $\widehat{A}$,
$\widehat{V}$, $\widehat{B}$ and $\widehat{a}_{0}$ (which are constant)
satisfy properties similar to (H1) and (H2), so that the uniqueness of the
solution to (\ref{1.4'}) is ensured by \cite[Theorem 3, Chap. 6, Section
6.2]{Evans} as well.

Although the qualitative homogenization result (\ref{1.4'}) is classically
known, it seems however useful to recall it here and provide a self-contained
proof. This is important because of the fact that in the definition of the
homogenized coefficients (\ref{1.5'}), we make use of the correctors $\chi
_{j}$ ($0\leq j\leq d$) that are here obtained as the solutions in the usual
sense of distributions, of problems (\ref{1.6'}) and (\ref{1.66'}). The
resolution of these equations in the distributional sense allows the
computation of the homogenized coefficients, and so of the solution $u_{0}$. A
numerical scheme is provided in \cite{JTW} in order to approximate the
homogenized coefficients, and numerical simulations are also provided therein.

Theorem \ref{t1.1} holds for any algebra with mean value, and is the main
qualitative result of this work. Concerning the quantitative aspect, there is
no general theory since the rates of convergence rely heavily on the intrinsic
properties of the coefficients of the operator. Depending on the existence or
not of true correctors, the convergence rates can be optimal or not. We
consider in this study a special framework that fits the setting of defect
periodic media. To be more precise, we assume that (H1) and (H2) still hold
true and we replace (H3) by (\ref{2.1})-(\ref{2.2}) below%

\begin{equation}
A=A_{0}+A_{per}\in L_{\infty,per}^{2}(\mathbb{R}^{d})^{d\times d}=(L_{0}%
^{2}(\mathbb{R}^{d})+L_{per}^{2}(Y))^{d\times d}\text{, }V\in L_{\infty
,per}^{2}(\mathbb{R}^{d})^{d} \label{2.1}%
\end{equation}%
\begin{equation}
B\in L_{\infty,per}^{2}(\mathbb{R}^{d})^{d}\text{ and }a_{0}\in L_{\infty
,per}^{2}(\mathbb{R}^{d}%
)\ \ \ \ \ \ \ \ \ \ \ \ \ \ \ \ \ \ \ \ \ \ \ \ \ \ \ \ \ \ \ \ \ \ \ \ \ \ \ \ \ \ \ \ \ \ \ \ \label{2.2'}%
\end{equation}
where the matrix $A_{per}$ is symmetric and further satisfies
\begin{equation}
\alpha\left\vert \xi\right\vert ^{2}\leq A_{per}(y)\xi\cdot\xi\leq
\beta\left\vert \xi\right\vert ^{2}\text{ for all }\xi\in\mathbb{R}^{d}\text{
and a.e. }y\in\mathbb{R}^{d}. \label{2.2}%
\end{equation}
We denote by $L_{\infty,per}^{2}(\mathbb{R}^{d})$ the Besicovitch space
associated to the algebra with mean value $\mathcal{A}=\mathcal{B}%
_{\infty,per}(\mathbb{R}^{d}):=\mathcal{C}_{0}(\mathbb{R}^{d})\oplus
\mathcal{C}_{per}(Y)$, and by $H_{\infty,per}^{1}(\mathbb{R}^{d})$ the
corresponding Besicovitch-Sobolev type space. The next theorem provides us
with the existence of true correctors.

\begin{theorem}
\label{t1.2}Assume that $A$ and $V$ satisfy \emph{(\ref{1.2})},
\emph{(\ref{2.1})} and \emph{(\ref{2.2})}. Then the corrector problem
\begin{equation}
-\operatorname{div}(A\nabla\chi_{0}+V)=0\text{ in }\mathbb{R}^{d}\text{,
}M(\chi_{0})=0 \label{0*}%
\end{equation}
has a weak solution $\chi_{0}\in H_{\infty,per}^{1}(\mathbb{R}^{d})$. In
addition $\chi_{0}$ has the form $\chi_{0}=\chi_{0}^{0}+\chi_{per}$ where
$\chi_{per}$ is the unique solution of the periodic corrector problem
\[
-\operatorname{div}(A_{per}\nabla\chi_{per}+V_{per})=0\text{ in }%
\mathbb{R}^{d}\text{, }\int_{Y}\chi_{per}=0
\]
and $\chi_{0}^{0}\in L_{0}^{2}(\mathbb{R}^{d})$.
\end{theorem}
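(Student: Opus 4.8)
The plan is to build $\chi_0$ by superposition: first solve the purely periodic cell problem, then add a decaying correction that absorbs the defect parts $A_0$ and $V_0$ of the coefficients. So I would begin with the periodic corrector problem $-\operatorname{div}(A_{per}\nabla\chi_{per}+V_{per})=0$ in $\mathbb{R}^d$ with $\int_Y\chi_{per}=0$. On the Hilbert space of $Y$-periodic $H^1$ functions with zero mean over $Y$, the form $(u,v)\mapsto\int_Y A_{per}\nabla u\cdot\nabla v\,dy$ is bounded (since $A_{per}\in L^\infty(\mathbb{R}^d)$ by \eqref{2.2}) and coercive (by the lower bound in \eqref{2.2} together with the Poincar\'e--Wirtinger inequality on $Y$), while $v\mapsto-\int_Y V_{per}\cdot\nabla v\,dy$ is a bounded linear functional. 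Lax--Milgram then yields a unique such $\chi_{per}$, which is the periodic corrector of the statement; in particular $M(\chi_{per})=\int_Y\chi_{per}=0$.

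Next I look for $\chi_0$ of the form $\chi_0=\chi_0^0+\chi_{per}$. Inserting this into \eqref{0*}, using $A=A_0+A_{per}$ and $V=V_0+V_{per}$, and using that $\chi_{per}$ solves the periodic problem, the problem reduces to finding a defect part $\chi_0^0$ with
\begin{equation}
-\operatorname{div}(A\nabla\chi_0^0)=\operatorname{div}(A_0\nabla\chi_{per}+V_0)\quad\text{in }\mathbb{R}^d. \label{eq:defectcorr}
\end{equation}
I would solve \eqref{eq:defectcorr} by Lax--Milgram in the homogeneous defect energy space (the Sobolev--Besicovitch space associated with $\mathcal{C}_0(\mathbb{R}^d)$, whose elements have gradient in $L_0^2(\mathbb{R}^d)^d$): there $(u,v)\mapsto\int_{\mathbb{R}^d}A\nabla u\cdot\nabla v\,dx$ is bounded and coercive by (H1), so that, \emph{provided} $G:=A_0\nabla\chi_{per}+V_0\in L_0^2(\mathbb{R}^d)^d$, one obtains a weak solution $\chi_0^0$, which moreover lies in $L_0^2(\mathbb{R}^d)$ by the embedding properties of the homogeneous defect space. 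Then $\chi_0:=\chi_0^0+\chi_{per}\in H^1_{\infty,per}(\mathbb{R}^d)$, and reversing the computation of the previous step shows that $\chi_0$ solves \eqref{0*}, while $M(\chi_0)=M(\chi_0^0)+M(\chi_{per})=0+0=0$; this is precisely the decomposition asserted in the theorem.

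The crux is the inclusion $G=A_0\nabla\chi_{per}+V_0\in L_0^2(\mathbb{R}^d)^d$ needed to make \eqref{eq:defectcorr} solvable. The term $V_0$ is harmless, being in $L_0^2(\mathbb{R}^d)^d$ by \eqref{2.1}. The delicate one is $A_0\nabla\chi_{per}$, the product of the decaying defect matrix $A_0$ with the merely $L^2$-periodic field $\nabla\chi_{per}$: one cannot appeal to an $L^\infty$ bound on $\nabla\chi_{per}$, which fails for bounded measurable $A_{per}$. Instead I would invoke Meyers' higher-integrability estimate for the periodic corrector, $\nabla\chi_{per}\in L^{2+\delta}_{per}(Y)^d$ for some $\delta=\delta(\alpha,\beta,d)>0$, and combine it cell by cell over the periodicity lattice, via H\"older's inequality, with the integrability and decay built into the space to which $A_0$ belongs, so as to conclude $A_0\nabla\chi_{per}\in L_0^2(\mathbb{R}^d)^d$. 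This interplay between Meyers-type regularity of the periodic corrector and the decay of the defect coefficients --- rather than the two otherwise routine Lax--Milgram arguments --- is the main obstacle of the proof.
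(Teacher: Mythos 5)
Your overall architecture --- solve the periodic cell problem, then superpose a ``defect'' corrector $\chi_0^0$ solving $-\operatorname{div}(A\nabla\chi_0^0)=\operatorname{div}(A_0\nabla\chi_{per}+V_0)$ --- is the strategy of Blanc, Le Bris and Lions, and it is precisely the strategy this paper is built to avoid, because under hypothesis (\ref{2.1}) it breaks down at your second Lax--Milgram step. The space you call the ``homogeneous defect energy space'' is ambiguous in a way that is fatal. If its elements are those whose gradient lies in $L_0^2(\mathbb{R}^d)^d$ in the Besicovitch sense, recall that $L_0^2(\mathbb{R}^d)=\mathcal{N}$, i.e.\ every element of $L_0^2(\mathbb{R}^d)$ has \emph{zero} seminorm $\left\Vert\cdot\right\Vert_2$; the energy form $M(A\nabla u\cdot\nabla v)$ and the functional $v\mapsto M(G\cdot\nabla v)$ then vanish identically on that space, so Lax--Milgram is vacuous and produces no function. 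If instead you mean the genuine homogeneous Sobolev space $\dot H^1(\mathbb{R}^d)$ with the Lebesgue integral $\int_{\mathbb{R}^d}A\nabla u\cdot\nabla v\,dx$ (which is what your coercivity claim ``by (H1)'' presupposes), then you need $G=A_0\nabla\chi_{per}+V_0\in L^2(\mathbb{R}^d)^d$ in the Lebesgue sense, not merely $G\in L_0^2(\mathbb{R}^d)^d$. These are very different: $L_0^2(\mathbb{R}^d)$ contains, e.g., every bounded continuous function vanishing at infinity, however slowly, and under (\ref{2.1}) neither $A_0$ nor $V_0$ is assumed to lie in any $L^r(\mathbb{R}^d)$ --- the paper stresses that dropping exactly this assumption (A2) is one of its main points. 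Your Meyers argument, even if carried out, only yields $M(|A_0\nabla\chi_{per}|^2)=0$, i.e.\ membership in the null space $\mathcal{N}$, which feeds back into the degenerate first reading; and the assertion that $\chi_0^0\in L_0^2(\mathbb{R}^d)$ ``by embedding'' is left without support.

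The paper's proof never solves an equation for $\chi_0^0$. Existence of a $\chi_0$ with $\nabla\chi_0\in L^2_{\infty,per}(\mathbb{R}^d)^d$ comes from the general Theorem \ref{t4.1} (regularized problems with zero-order term $T^{-2}u_T$ solved in the sense of distributions, followed by the limit $T\to\infty$). The decomposition is then obtained by passing to the quotient $\mathcal{L}^2_{\infty,per}(\mathbb{R}^d)=L^2_{\infty,per}(\mathbb{R}^d)/L_0^2(\mathbb{R}^d)\cong L^2_{per}(Y)$: both $\chi_0$ and $\chi_{per}$ satisfy the same uniquely solvable variational equation (\ref{8.10}) in $\mathcal{H}^1_{\#}(\mathbb{R}^d)$, hence coincide modulo $L_0^2(\mathbb{R}^d)$, and $\chi_0^0:=\chi_0-\chi_{per}\in L_0^2(\mathbb{R}^d)$ is just the (explicitly non-unique) remainder, not the solution of a defect PDE. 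To salvage your superposition argument you would have to reinstate integrability hypotheses of type (A2) on $A_0$ and $V_0$, which proves a strictly weaker theorem than the one stated.
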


Theorem \ref{t1.2} has been proved in \cite{Lebris, Blanc2018, Blanc2019, BlancAA} (see
also \cite{JTW}) under the following restrictive assumption on the
coefficients $A$ and $V$:

\begin{itemize}
\item[(A1)] $A_{per}$, $V_{per}$ and $A_{0}$ have entries in $\mathcal{C}%
^{0,\nu}(\mathbb{R}^{d})$ for some $\nu>0$

\noindent and

\item[(A2)] $A_{0}$ and $V_{0}$ have entries in $L^{r}(\mathbb{R}^{d})$ for
some $1\leq r<\infty$.
\end{itemize}

In the current work, no such \ restriction is made on the coefficients $A$ and
$V$. Few comments are in order. 1) Assumption (A2) (for $r\geq2$) is a special
case of our general assumption. Indeed for any $r\geq2$, we have
$L^{r}(\mathbb{R}^{d})\subset L_{0}^{2}(\mathbb{R}^{d})$ where $L_{0}%
^{2}(\mathbb{R}^{d})$ is the closure of $\mathcal{C}_{0}(\mathbb{R}^{d})$ with
respect to the seminorm $\left\Vert u\right\Vert _{2}=(M(\left\vert
u\right\vert ^{2}))^{\frac{1}{2}}$. The latter inclusion stems from the fact
that $L_{loc}^{r}(\mathbb{R}^{d})\subset L_{loc}^{2}(\mathbb{R}^{d})$ for
$r\geq2$, so that $L^{r}(\mathbb{R}^{d})\subset L_{0}^{r}(\mathbb{R}%
^{d})\subset L_{0}^{2}(\mathbb{R}^{d})$. 2) The assumption (A1) is made
essentially in order to obtain the boundedness of the gradient of the periodic
corrector, which is a crucial step in \cite{Lebris, Blanc2018, Blanc2019, BlancAA,
JTW}. For the existence of the asymptotic periodic corrector, we need the
existence of its periodic component. This justifies the further assumption
(\ref{2.2}), which besides is also made in all these previous references.

Before stating the next result, it seems necessary to outline the proof of
Theorem \ref{t1.2}, which is short but subtle. It relies on two main features.
1) The resolution of the corrector problem (\ref{0*}) in the weak sense of
distributions in $\mathbb{R}^{d}$, which provides us with a solution $\chi
_{0}\in H_{loc}^{1}(\mathbb{R}^{d})$ such that its gradient $\nabla\chi_{0}\in
L_{\infty,per}^{2}(\mathbb{R}^{d})$ is unique. 2) The resolution of the same
problem in the sense of the duality arising from the mean value. In this way,
we first establish the existence of an isometric isomorphism between the space
of periodic correctors $H_{per}^{1}(Y)/\mathbb{R}$ and the one of asymptotic
periodic correctors $\mathcal{H}_{\#}^{1}(\mathbb{R}^{d})$, and we show that
the asymptotic periodic corrector exists if and only if the periodic one
exists. This leads to the existence of a unique class modulo $L_{0}%
^{2}(\mathbb{R}^{d})$ of the solutions of the asymptotic periodic corrector
problem. Finally we check that the solution obtained in the sense of
distributions, belongs to the unique class of solutions obtained above, so
that the existence of the solutions to (\ref{0*}) in $H_{\infty,per}%
^{1}(\mathbb{R}^{d})$ is established, and it is worth stating that this
solution is not unique. Its uniqueness is up to an additive function in
$L_{0}^{2}(\mathbb{R}^{d})$, that is, its uniqueness is closely related to the
uniqueness of the solution to its periodic component.

Relying on the existence of true correctors, we are able to prove that the
rates of convergence in $L^{2}$ are optimal provided $u_{0}\in H^{2}(\Omega)$.
The following theorem gives such a result.

\begin{theorem}
\label{t2.1}Let $\Omega$ be a $\mathcal{C}^{1,1}$ bounded domain in
$\mathbb{R}^{d}$. Suppose \emph{(\ref{1.2})}, \emph{(\ref{2.1})},
\emph{(\ref{2.2'})} and \emph{(\ref{2.2})} hold. Let $u_{\varepsilon}$ and
$u_{0}$ be the weak solutions of the Dirichlet homogeneous problems
\emph{(\ref{*10})} and \emph{(\ref{1.4'})} respectively. Suppose further that
$u_{0}\in H^{2}(\Omega)$ and $F=0$. Then, for $\varepsilon>0$ we have the
estimate
\begin{equation}
\left\Vert u_{\varepsilon}-u_{0}\right\Vert _{L^{2}(\Omega)}\leq
C\varepsilon\left\Vert f\right\Vert _{L^{2}(\Omega)} \label{5.9}%
\end{equation}
where the constant $C$ depends only on $d$, $\alpha$, $\beta$, $\alpha_{0}$
and $\Omega$.
\end{theorem}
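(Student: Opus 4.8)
The plan is to follow the classical route to optimal $L^{2}$ convergence rates---a first-order two-scale expansion with a boundary-layer correction, an $O(\sqrt{\varepsilon})$ energy estimate for the remainder, and an Aubin--Nitsche duality argument upgrading this to $O(\varepsilon)$ in $L^{2}$---but carried out in the present \emph{nonsmooth} asymptotic-periodic setting, relying only on the correctors provided by Theorem \ref{t1.2} (together with its matrix-valued companion for $\chi=(\chi_{j})_{1\le j\le d}$) and on a Helmholtz/Poincar\'e-type decomposition in Besicovitch spaces. First I would fix the correctors $\chi_{0},\chi_{1},\dots,\chi_{d}\in H^{1}_{\infty,per}(\mathbb{R}^{d})$: by Theorem \ref{t1.2} each splits as a periodic part in $H^{1}_{per}(Y)$ plus an $L^{2}_{0}(\mathbb{R}^{d})$-part, and all the $\nabla_{y}\chi_{j}$ lie in $L^{2}_{\infty,per}(\mathbb{R}^{d})$ with norms bounded through $d,\alpha,\beta,\alpha_{0}$ only. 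Since $\widehat A,\widehat V,\widehat B,\widehat a_{0}$ are constant and $\Omega$ is $\mathcal{C}^{1,1}$, the hypothesis $u_{0}\in H^{2}(\Omega)$ in fact comes with $\|u_{0}\|_{H^{2}(\Omega)}\le C\|f\|_{L^{2}(\Omega)}$. I would then introduce a Steklov-type smoothing operator $S_{\varepsilon}$ (to compensate for $\nabla u_{0}$ being only $H^{1}$) and a cut-off $\eta_{\varepsilon}\in\mathcal{C}_{c}^{\infty}(\Omega)$ with $0\le\eta_{\varepsilon}\le1$, $\eta_{\varepsilon}\equiv1$ off the layer $\Omega_{2\varepsilon}=\{x\in\Omega:\operatorname{dist}(x,\partial\Omega)<2\varepsilon\}$ and $|\nabla\eta_{\varepsilon}|\le C\varepsilon^{-1}$, and set, with $\chi^{\varepsilon}(x)=\chi(x/\varepsilon)$ and so on,
\[
z_{\varepsilon}=u_{0}+\varepsilon\,\eta_{\varepsilon}\big(\chi^{\varepsilon}S_{\varepsilon}(\nabla u_{0})+\chi_{0}^{\varepsilon}S_{\varepsilon}(u_{0})\big),\qquad w_{\varepsilon}=u_{\varepsilon}-z_{\varepsilon}\in H_{0}^{1}(\Omega).
\]

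The central step is the computation of $\mathcal{P}_{\varepsilon}z_{\varepsilon}-f$. Using $\mathcal{P}_{0}u_{0}=f$ (the homogenized equation \eqref{1.4'}), the corrector equations \eqref{1.6'}--\eqref{1.66'}, and the formulas \eqref{1.5'}, the leading oscillatory discrepancy in the flux is carried by the coefficient defects $\big(A(I_{d}+\nabla_{y}\chi)-\widehat A\big)^{\varepsilon}$ and $\big(A\nabla_{y}\chi_{0}+V-\widehat V\big)^{\varepsilon}$, which by the corrector equations are \emph{$y$-divergence-free and of mean zero}; hence, by the Helmholtz-type decomposition, each equals $\operatorname{div}_{y}$ of a matrix that is skew-symmetric in the relevant indices and lies in $H^{1}_{\infty,per}(\mathbb{R}^{d})$---i.e. a periodic $H^{1}_{per}(Y)$ part plus an $L^{2}_{0}(\mathbb{R}^{d})$ part, and \emph{no} $L^{\infty}$ control is needed. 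Writing $(\operatorname{div}_{y}\mathcal{E})(x/\varepsilon)=\varepsilon\operatorname{div}_{x}\big(\mathcal{E}(x/\varepsilon)\big)$ and transferring one derivative onto the test function, the skew-symmetry annihilates the second-derivative-of-test-function contribution and leaves an explicit factor $\varepsilon$; the mean-zero \emph{lower-order} flux defects coming from $B^{\varepsilon}\nabla z_{\varepsilon}$ and $a_{0}^{\varepsilon}z_{\varepsilon}$ are treated the same way (being merely mean zero they are still $y$-divergences of elements of $H^{1}_{\infty,per}(\mathbb{R}^{d})$, with no skew-symmetry required). The outcome is an identity
\[
\mathcal{B}_{\varepsilon,\Omega}(w_{\varepsilon},\varphi)=\varepsilon\!\int_{\Omega}\big(R_{\varepsilon}\cdot\nabla\varphi+r_{\varepsilon}\varphi\big)dx+\int_{\Omega_{2\varepsilon}}\big(G_{\varepsilon}\cdot\nabla\varphi+g_{\varepsilon}\varphi\big)dx,\qquad\varphi\in H_{0}^{1}(\Omega),
\]
with $\|R_{\varepsilon}\|_{L^{2}(\Omega)}+\|r_{\varepsilon}\|_{L^{2}(\Omega)}\le C\|u_{0}\|_{H^{2}}$ and, via a trace/Poincar\'e estimate in the thin layer for $u_{0}\in H^{2}\cap H_{0}^{1}$, $\|G_{\varepsilon}\|_{L^{2}(\Omega_{2\varepsilon})}+\|g_{\varepsilon}\|_{L^{2}(\Omega_{2\varepsilon})}\le C\sqrt{\varepsilon}\,\|u_{0}\|_{H^{2}}$.

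Choosing $\varphi=w_{\varepsilon}$ and using the $H_{0}^{1}$-coercivity of $\mathcal{B}_{\varepsilon,\Omega}$ (valid since $\mu\ge\mu_{0}$) yields the energy bound $\|w_{\varepsilon}\|_{H_{0}^{1}(\Omega)}\le C\sqrt{\varepsilon}\,\|f\|_{L^{2}(\Omega)}$. To reach the optimal rate I would argue by duality: for $\psi\in L^{2}(\Omega)$ let $v_{\varepsilon}\in H_{0}^{1}(\Omega)$ solve the adjoint problem $\mathcal{P}_{\varepsilon}^{*}v_{\varepsilon}=\psi$ and $v_{0}$ the homogenized adjoint problem ($v_{0}\in H^{2}(\Omega)$, $\|v_{0}\|_{H^{2}}\le C\|\psi\|_{L^{2}}$); since the adjoint coefficients satisfy the same hypotheses and the adjoint correctors have the same structure, the construction above applied to $\mathcal{P}_{\varepsilon}^{*}$ gives $\widetilde v_{\varepsilon}=v_{0}+\varepsilon\eta_{\varepsilon}\big(\chi^{*\varepsilon}S_{\varepsilon}(\nabla v_{0})+\chi_{0}^{*\varepsilon}S_{\varepsilon}(v_{0})\big)$ with $\|v_{\varepsilon}-\widetilde v_{\varepsilon}\|_{H_{0}^{1}}\le C\sqrt{\varepsilon}\,\|\psi\|_{L^{2}}$. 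From the two weak formulations one obtains the identity $\int_{\Omega}(u_{\varepsilon}-u_{0})\psi\,dx=\mathcal{B}_{0,\Omega}(u_{0},v_{\varepsilon})-\mathcal{B}_{\varepsilon,\Omega}(u_{0},v_{\varepsilon})$; substituting $\widetilde v_{\varepsilon}$ for $v_{\varepsilon}$ and expanding, the interior discrepancies recombine---via the corrector and flux-corrector identities---into terms pairing two $L^{2}$-bounded factors with an \emph{explicit extra} $\varepsilon$, hence $O(\varepsilon)\|u_{0}\|_{H^{2}}\|\psi\|_{L^{2}}$; the terms carrying $v_{\varepsilon}-\widetilde v_{\varepsilon}$ are controlled by pairing the $O(\sqrt{\varepsilon})$ energy bound against boundary-layer quantities supported in $\Omega_{2\varepsilon}$, where a Caccioppoli/Poincar\'e estimate supplies a second $\sqrt{\varepsilon}$; and the purely boundary-layer terms are estimated on the thin set $\Omega_{2\varepsilon}$. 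Altogether $\big|\int_{\Omega}(u_{\varepsilon}-u_{0})\psi\big|\le C\varepsilon\|f\|_{L^{2}}\|\psi\|_{L^{2}}$; taking the supremum over $\psi$, and adding $\varepsilon\|\eta_{\varepsilon}(\chi^{\varepsilon}S_{\varepsilon}\nabla u_{0}+\chi_{0}^{\varepsilon}S_{\varepsilon}u_{0})\|_{L^{2}}\le C\varepsilon\|u_{0}\|_{H^{1}}\le C\varepsilon\|f\|_{L^{2}}$ to pass from $w_{\varepsilon}$ back to $u_{\varepsilon}-u_{0}$, gives \eqref{5.9}.

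The main obstacle, relative to the classical periodic case, is precisely the lack of any $L^{\infty}$ control on the correctors and---above all---on the \emph{flux} correctors: every estimate must be $L^{2}$-based, which is why the Helmholtz-type decomposition in Besicovitch spaces, yielding dual correctors in $H^{1}_{\infty,per}(\mathbb{R}^{d})$ (periodic $H^{1}$ plus $L^{2}_{0}$), is the decisive ingredient rather than a routine one. A secondary but real difficulty is the bookkeeping forced by the lower-order coefficients $V$, $B$, $a_{0}$: each contributes its own corrector $\chi_{0}$ and its own mean-zero flux defect to be absorbed into a $y$-divergence with an accompanying $\varepsilon$ gain, and in the duality step one must keep careful track of how this interior $\varepsilon$-gain is balanced against the $\sqrt{\varepsilon}$ loss carried by the boundary layer.
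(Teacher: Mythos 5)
Your overall strategy---first-order expansion smoothed by Shen's operator $S_{\varepsilon}$, flux correctors obtained from the Helmholtz/Poincar\'e decomposition in $H^{1}_{\infty,per}(\mathbb{R}^{d})$ with only local-$L^{2}$ control, a boundary cut-off, the thin-layer estimate of Lemma \ref{l5.4}, and an adjoint duality step---is exactly the paper's toolbox. The organizational differences are real but benign: you put the cut-off $\eta_{\varepsilon}$ directly into the ansatz, whereas the paper keeps the ansatz \eqref{5.1} global and introduces an auxiliary function $z_{\varepsilon}$ solving $\mathcal{P}_{\varepsilon}z_{\varepsilon}=0$ with the oscillating boundary data \eqref{5.3}; this makes the interior remainder estimate \eqref{20} already $O(\varepsilon)$ in $H^{1}$, confines all $O(\sqrt{\varepsilon})$ losses to $z_{\varepsilon}$, and lets the duality be applied to $z_{\varepsilon}$ alone, whose ``source'' $\phi_{\varepsilon}$ in \eqref{5.36} is genuinely supported in the $2\varepsilon$-layer.

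However, your duality step as written does not close. Starting from $\int_{\Omega}(u_{\varepsilon}-u_{0})\psi=\mathcal{B}_{0,\Omega}(u_{0},v_{\varepsilon})-\mathcal{B}_{\varepsilon,\Omega}(u_{0},v_{\varepsilon})$ and ``substituting $\widetilde v_{\varepsilon}$ for $v_{\varepsilon}$'' leaves the remainder $\bigl[\mathcal{B}_{0,\Omega}-\mathcal{B}_{\varepsilon,\Omega}\bigr](u_{0},v_{\varepsilon}-\widetilde v_{\varepsilon})$, whose integrand, e.g.\ $(\widehat{A}-A^{\varepsilon})\nabla u_{0}\cdot\nabla(v_{\varepsilon}-\widetilde v_{\varepsilon})$, lives on all of $\Omega$, not in the boundary layer; the only available bound is $\Vert\nabla(v_{\varepsilon}-\widetilde v_{\varepsilon})\Vert_{L^{2}(\Omega)}\leq C\sqrt{\varepsilon}\Vert\psi\Vert_{L^{2}}$, so this term is a priori only $O(\sqrt{\varepsilon})$, and there is no boundary layer from which to harvest a second $\sqrt{\varepsilon}$. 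Integrating by parts via the flux corrector does not rescue it, since a derivative then falls on $\nabla(v_{\varepsilon}-\widetilde v_{\varepsilon})$, which is not uniformly in $H^{1}$. The repair is to run the duality against the remainder equation rather than against $u_{0}$: with $w_{\varepsilon}=u_{\varepsilon}-z_{\varepsilon}$ one has $\int_{\Omega}w_{\varepsilon}\psi=\mathcal{B}_{\varepsilon,\Omega}(w_{\varepsilon},v_{\varepsilon})$, and your own identity for $\mathcal{B}_{\varepsilon,\Omega}(w_{\varepsilon},\cdot)$ splits this into an interior part bounded by $C\varepsilon\Vert f\Vert_{L^{2}}\Vert\nabla v_{\varepsilon}\Vert_{L^{2}(\Omega)}$ and a layer part $\int_{\Omega_{2\varepsilon}}(G_{\varepsilon}\cdot\nabla v_{\varepsilon}+\dots)$, which becomes $O(\varepsilon)$ only once one proves the layer estimate $\Vert\nabla v_{\varepsilon}\Vert_{L^{2}(\Omega_{2\varepsilon})}\leq C\sqrt{\varepsilon}\Vert\psi\Vert_{L^{2}}$ for the adjoint solution; that estimate is where $\widetilde v_{\varepsilon}$ and Lemma \ref{l5.4} actually enter --- compare \eqref{**}, \eqref{eq05} and the final pairing \eqref{eq07} in the paper. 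With this reassembly your plan goes through. A minor further point: $S_{\varepsilon}(\nabla u_{0})$ requires the $H^{2}(\mathbb{R}^{d})$ extension $\widetilde u_{0}$ of \eqref{6.16} (or a support condition on $\eta_{\varepsilon}$ compatible with the mollifier's support) in order to be defined.
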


We point out that, to our knowledge, the result in Theorem \ref{t2.1} is new
and extends the results obtained in \cite{Lebris, Blanc2018, Blanc2019, BlancAA, JTW}
to more general elliptic equations in at least two perspectives:

\begin{enumerate}
\item The operator considered in this study is more general than the one
treated in these works and contains lower order terms, and so requires a
careful treatment;

\item We do not assume neither any $L^{r}$ integrability on the asymptotic
components $A_{0}$ and $V_{0}$ of the coefficients $A$ and $V$, nor any
H\"{o}lder continuity of the periodic components $A_{per}$ and $V_{per}$ as it
is the case in all the previous references cited above. This leads to the
existence of a corrector not necessarily smooth. As a result, we use a
smoothing operator borrowed from \cite{Shen2} and that is well-suited for the
treatment of nonperiodic homogenization problems, which itself follows from an
original idea by Zhikov and Pastukhova \cite{Zhikov1} in the periodic setting
(see \cite{CPZ} for the nonlinear case).
\end{enumerate}

We also point out an important fact. With less regularity than in
\cite{Blanc2019, BlancAA}, however we obtain sharper $L^{2}$ convergence rates than in
\cite{Blanc2019, BlancAA}. Indeed in \cite{Blanc2019, BlancAA} it is shown that if the matrix
$A$ has the form $A=A_{0}+A_{per}$ with $A_{0}\in L^{r}(\mathbb{R}%
^{d})^{d\times d}$ for $r\geq2$, then it holds
\[
\left\Vert u_{\varepsilon}-u_{0}\right\Vert _{L^{2}(\Omega)}\leq
C\varepsilon^{\nu_{r}}\text{ where }\nu_{r}=\min(1,d/r).
\]
Noticing that the matrices $A$ with $A_{0}\in L^{r}(\mathbb{R}^{d})^{d\times
d}$ as given in \cite{Blanc2019, BlancAA} are special cases of our general assumption
(H3), we obtain by Theorem \ref{t2.1} and under this special hypothesis,
optimal convergence rates, in contrast with the result obtained in
\cite{Blanc2019, BlancAA}, which is not always optimal.

The homogenization of linear elliptic equations of general form has been
extensively studied in the literature. Nevertheless, to our knowledge, the
only works that are closely related to ours are \cite{Xu1, Xu2}. In these
works, the author considers a system of elliptic operators each of the same
type as $\mathcal{P}_{\varepsilon}$, but in the periodic framework. However, a
careful study of the results obtained in \cite{Xu1, Xu2} reveals that our
results set a fundamental basis for the study undertaken in these works in
nonsmooth domains, under asymptotic periodic assumption on the coefficients.

The paper is outlined as follows. In Section 2 we prove a Helmholtz-type
decomposition theorem (Theorem \ref{t1}) for the generalized Besicovitch
spaces and apply it to derive a simple proof of a compactness result related
to sigma-convergence. The well-posedness of the local corrector problems is
guaranteed by the results stated in Section 3. Finally, Sections 4, 5 and 6
are dedicated to the proof of the main theorems.

\section{On a Helmholtz-type result and application\label{sec2}}

We start this section by giving some fundamentals of algebras with mean value.

\subsection{An overview of algebras with mean value and sigma-convergence
concept}

Let $\mathcal{A}$ be an algebra with mean value on $\mathbb{R}^{d}$
\cite{Jikov}, that is, a closed subalgebra of the $\mathcal{C}^{\ast}$-algebra
of bounded uniformly continuous real-valued functions on $\mathbb{R}^{d}$,
$\mathrm{BUC}(\mathbb{R}^{d})$, which contains the constants, is translation
invariant and is such that any of its elements possesses a mean value in the
following sense: for every $u\in\mathcal{A}$, the sequence $(u^{\varepsilon
})_{\varepsilon>0}$ ($u^{\varepsilon}(x)=u(x/\varepsilon)$) weakly$\ast
$-converges in $L^{\infty}(\mathbb{R}^{d})$ to some real number $M(u)$ (called
the mean value of $u$) as $\varepsilon\rightarrow0$. The mean value expresses
as
\begin{equation}
M(u)=\lim_{R\rightarrow\infty}%
\mathchoice {{\setbox0=\hbox{$\displaystyle{\textstyle
-}{\int}$ } \vcenter{\hbox{$\textstyle -$
}}\kern-.6\wd0}}{{\setbox0=\hbox{$\textstyle{\scriptstyle -}{\int}$ } \vcenter{\hbox{$\scriptstyle -$
}}\kern-.6\wd0}}{{\setbox0=\hbox{$\scriptstyle{\scriptscriptstyle -}{\int}$
} \vcenter{\hbox{$\scriptscriptstyle -$
}}\kern-.6\wd0}}{{\setbox0=\hbox{$\scriptscriptstyle{\scriptscriptstyle
-}{\int}$ } \vcenter{\hbox{$\scriptscriptstyle -$ }}\kern-.6\wd0}}\!\int
_{B_{R}}u(y)dy\text{ for }u\in\mathcal{A}. \label{0.1}%
\end{equation}
To an algebra $\mathcal{A}$ with mean value are attached its smooth
subalgebras $\mathcal{A}^{m}=\{u\in\mathcal{A}:D^{\beta}u\in\mathcal{A}$ for
all $\left\vert \beta\right\vert \leq m\}$ and $\mathcal{A}^{\infty}=\cap
_{m}\mathcal{A}^{m}$.

For $1\leq p<\infty$, we define the Marcinkiewicz space $\mathfrak{M}%
^{p}(\mathbb{R}^{d})$ to be the set of functions $u\in L_{loc}^{p}%
(\mathbb{R}^{d})$ such that
\[
\underset{R\rightarrow\infty}{\lim\sup}%
\mathchoice {{\setbox0=\hbox{$\displaystyle{\textstyle
-}{\int}$ } \vcenter{\hbox{$\textstyle -$
}}\kern-.6\wd0}}{{\setbox0=\hbox{$\textstyle{\scriptstyle -}{\int}$ }
\vcenter{\hbox{$\scriptstyle -$
}}\kern-.6\wd0}}{{\setbox0=\hbox{$\scriptstyle{\scriptscriptstyle -}{\int}$
} \vcenter{\hbox{$\scriptscriptstyle -$
}}\kern-.6\wd0}}{{\setbox0=\hbox{$\scriptscriptstyle{\scriptscriptstyle
-}{\int}$ } \vcenter{\hbox{$\scriptscriptstyle -$ }}\kern-.6\wd0}}\!\int
_{B_{R}}\left\vert u(y)\right\vert ^{p}dy<\infty.
\]
Endowed with the seminorm $\left\Vert u\right\Vert _{p}=\left(  \underset
{R\rightarrow\infty}{\lim\sup}%
\mathchoice {{\setbox0=\hbox{$\displaystyle{\textstyle
-}{\int}$ } \vcenter{\hbox{$\textstyle -$
}}\kern-.6\wd0}}{{\setbox0=\hbox{$\textstyle{\scriptstyle -}{\int}$ }
\vcenter{\hbox{$\scriptstyle -$
}}\kern-.6\wd0}}{{\setbox0=\hbox{$\scriptstyle{\scriptscriptstyle -}{\int}$
} \vcenter{\hbox{$\scriptscriptstyle -$
}}\kern-.6\wd0}}{{\setbox0=\hbox{$\scriptscriptstyle{\scriptscriptstyle
-}{\int}$ } \vcenter{\hbox{$\scriptscriptstyle -$ }}\kern-.6\wd0}}\!\int
_{B_{R}}\left\vert u(y)\right\vert ^{p}dy\right)  ^{1/p}$, the space
$\mathfrak{M}^{p}(\mathbb{R}^{d})$ is a complete seminormed space.

\begin{definition}
\label{B-space} We define the \textit{generalized Besicovitch} \textit{space}
$B_{\mathcal{A}}^{p}(\mathbb{R}^{d})$ ($1\leq p<\infty$) as the closure of the
algebra $\mathcal{A}$ in $\mathfrak{M}^{p}(\mathbb{R}^{d})$.
\end{definition}

Since any function in $B_{\mathcal{A}}^{p}(\mathbb{R}^{d})$ is the limit of a
sequence of elements in $\mathcal{A}$, we get that for any $u\in
B_{\mathcal{A}}^{p}(\mathbb{R}^{d})$, $M(\left\vert u\right\vert ^{p})$ exists
and we have
\begin{equation}
\left\Vert u\right\Vert _{p}=\left(  \lim_{R\rightarrow\infty}%
\mathchoice {{\setbox0=\hbox{$\displaystyle{\textstyle
-}{\int}$ } \vcenter{\hbox{$\textstyle -$
}}\kern-.6\wd0}}{{\setbox0=\hbox{$\textstyle{\scriptstyle -}{\int}$ } \vcenter{\hbox{$\scriptstyle -$
}}\kern-.6\wd0}}{{\setbox0=\hbox{$\scriptstyle{\scriptscriptstyle -}{\int}$
} \vcenter{\hbox{$\scriptscriptstyle -$
}}\kern-.6\wd0}}{{\setbox0=\hbox{$\scriptscriptstyle{\scriptscriptstyle
-}{\int}$ } \vcenter{\hbox{$\scriptscriptstyle -$ }}\kern-.6\wd0}}\!\int
_{B_{R}}\left\vert u(y)\right\vert ^{p}dy\right)  ^{\frac{1}{p}}=(M(\left\vert
u\right\vert ^{p}))^{\frac{1}{p}}. \label{0.2}%
\end{equation}
In this regard, we consider the space
\[
B_{\mathcal{A}}^{1,p}(\mathbb{R}^{d})=\{u\in B_{\mathcal{A}}^{p}%
(\mathbb{R}^{d}):\nabla_{y}u\in(B_{\mathcal{A}}^{p}(\mathbb{R}^{d}))^{d}\}
\]
endowed with the seminorm
\[
\left\Vert u\right\Vert _{1,p}=\left(  \left\Vert u\right\Vert _{p}%
^{p}+\left\Vert \nabla_{y}u\right\Vert _{p}^{p}\right)  ^{\frac{1}{p}},
\]
which is a complete seminormed space. The Banach counterpart of the previous
spaces are defined as follows. We set $\mathcal{B}_{\mathcal{A}}%
^{p}(\mathbb{R}^{d})=B_{\mathcal{A}}^{p}(\mathbb{R}^{d})/\mathcal{N}$ where
$\mathcal{N}=\{u\in B_{\mathcal{A}}^{p}(\mathbb{R}^{d}):\left\Vert
u\right\Vert _{p}=0\}$. We define $\mathcal{B}_{\mathcal{A}}^{1,p}%
(\mathbb{R}^{d})$ mutatis mutandis: replace $B_{\mathcal{A}}^{p}%
(\mathbb{R}^{d})$ by $\mathcal{B}_{\mathcal{A}}^{p}(\mathbb{R}^{d})$ and
$\partial/\partial y_{i}$ by $\overline{\partial}/\partial y_{i}$, where
$\overline{\partial}/\partial y_{i}$ is defined by
\begin{equation}
\frac{\overline{\partial}}{\partial y_{i}}(u+\mathcal{N}):=\frac{\partial
u}{\partial y_{i}}+\mathcal{N}\text{ for }u\in B_{\mathcal{A}}^{1,p}%
(\mathbb{R}^{d}). \label{0.3}%
\end{equation}
It is important to note that $\overline{\partial}/\partial y_{i}$ is also
defined as the infinitesimal generator in the $i$th direction coordinate of
the strongly continuous group $\mathcal{T}(y):\mathcal{B}_{\mathcal{A}}%
^{p}(\mathbb{R}^{d})\rightarrow\mathcal{B}_{\mathcal{A}}^{p}(\mathbb{R}%
^{d});\ \mathcal{T}(y)(u+\mathcal{N})=u(\cdot+y)+\mathcal{N}$. Let us denote
by $\varrho:B_{\mathcal{A}}^{p}(\mathbb{R}^{d})\rightarrow\mathcal{B}%
_{\mathcal{A}}^{p}(\mathbb{R}^{d})=B_{\mathcal{A}}^{p}(\mathbb{R}%
^{d})/\mathcal{N}$, $\varrho(u)=u+\mathcal{N}$, the canonical surjection. We
remark that if $u\in B_{\mathcal{A}}^{1,p}(\mathbb{R}^{d})$ then
$\varrho(u)\in\mathcal{B}_{\mathcal{A}}^{1,p}(\mathbb{R}^{d})$ with further
\[
\frac{\overline{\partial}\varrho(u)}{\partial y_{i}}=\varrho\left(
\frac{\partial u}{\partial y_{i}}\right)  ,
\]
as seen above in (\ref{0.3}).

We need a further notion. A function $f\in\mathcal{B}_{\mathcal{A}}%
^{1}(\mathbb{R}^{d})$ is said to be \emph{invariant} if for any $y\in
\mathbb{R}^{d}$, $\mathcal{T}(y)f=f$. It is immediate that the above notion of
invariance is the well-known one relative to dynamical systems. An algebra
with mean value will therefore said to be \emph{ergodic} if every invariant
function $f$ is constant in $\mathcal{B}_{\mathcal{A}}^{1}(\mathbb{R}^{d})$.
As in \cite{CMP} one may show that $f\in\mathcal{B}_{\mathcal{A}}%
^{1}(\mathbb{R}^{d})$ is invariant if and only if $\frac{\overline{\partial}%
f}{\partial y_{i}}=0$ for all $1\leq i\leq d$. We denote by $I_{\mathcal{A}%
}^{p}(\mathbb{R}^{d})$ the set of $f\in\mathcal{B}_{\mathcal{A}}%
^{p}(\mathbb{R}^{d})$ that are invariant. The set $I_{\mathcal{A}}%
^{p}(\mathbb{R}^{d})$ is a closed vector subspace of $\mathcal{B}%
_{\mathcal{A}}^{p}(\mathbb{R}^{d})$ satisfying the following important
property:%
\begin{equation}
f\in I_{\mathcal{A}}^{p}(\mathbb{R}^{d})\text{ if and only if }\frac
{\overline{\partial}f}{\partial y_{i}}=0\text{ for all }1\leq i\leq d\text{.}
\label{inv}%
\end{equation}

\begin{remark}
\label{r0}\emph{It is important to note that there are algebras with mean
value that are not ergodic. A typical example is the algebra }$\mathcal{A}%
$\emph{ generated by the function }$f(y)=\cos\sqrt[3]{y}$\emph{ (}%
$y\in\mathbb{R}$\emph{) and its translates. It is shown in \cite[page
243]{Jikov} that }$\mathcal{A}$\emph{ is not ergodic. So our setting is beyond
ergodic algebras in general.}
\end{remark}

Let us also recall the following properties \cite[Proposition 2.4, Theorem 2.6
and Corollary 2.7]{CMP} (see also \cite{NA}):

\begin{itemize}
\item[(P)$_{1}$] The mean value $M$ viewed as defined on $\mathcal{A}$,
extends by continuity to a non negative continuous linear form (still denoted
by $M$) on $B_{\mathcal{A}}^{p}(\mathbb{R}^{d})$. For each $u\in
B_{\mathcal{A}}^{p}(\mathbb{R}^{d})$ and all $a\in\mathbb{R}^{d}$, we have
$M(u(\cdot+a))=M(u)$, and $\left\Vert u\right\Vert _{p}=\left[  M(\left\vert
u\right\vert ^{p})\right]  ^{1/p}$.

\item[(P)$_{2}$] Under the duality defined by $\left\langle u,v\right\rangle
=M(uv)$ for $u\in B_{\mathcal{A}}^{p}(\mathbb{R}^{d})$ and $v\in
B_{\mathcal{A}}^{p^{\prime}}(\mathbb{R}^{d})$ ($1<p<\infty$ and $\frac{1}%
{p}+\frac{1}{p^{\prime}}=1$), the topological dual of the space
$B_{\mathcal{A}}^{p}(\mathbb{R}^{d})$ is $B_{\mathcal{A}}^{p^{\prime}%
}(\mathbb{R}^{d})$. More precisely, if $L:B_{\mathcal{A}}^{p}(\mathbb{R}%
^{d})\rightarrow\mathbb{R}$ is a continuous linear functional, there exists a
function $u\in B_{\mathcal{A}}^{p^{\prime}}(\mathbb{R}^{d})$ which is unique
up to an additive function $w\in\mathcal{N}$ such that $L(v)=M(uv)$ for all
$v\in B_{\mathcal{A}}^{p}(\mathbb{R}^{d})$.
\end{itemize}

To the space $B_{\mathcal{A}}^{p}(\mathbb{R}^{d})$ we also attach the
following \textit{corrector} space
\[
B_{\#\mathcal{A}}^{1,p}(\mathbb{R}^{d})=\{u\in W_{loc}^{1,p}(\mathbb{R}%
^{d}):\nabla u\in B_{\mathcal{A}}^{p}(\mathbb{R}^{d})^{d}\text{ and }M(\nabla
u)=0\}\text{.}%
\]
In $B_{\#\mathcal{A}}^{1,p}(\mathbb{R}^{d})$ we identify two elements by their
gradients: $u=v$ in $B_{\#\mathcal{A}}^{1,p}(\mathbb{R}^{d})$ iff
$\nabla(u-v)=0$, i.e. $\left\Vert \nabla(u-v)\right\Vert _{p}=0$. We may
therefore equip $B_{\#\mathcal{A}}^{1,p}(\mathbb{R}^{d})$ with the gradient
norm $\left\Vert u\right\Vert _{\#,p}=\left\Vert \nabla u\right\Vert _{p}$.
This defines a Banach space \cite[Theorem 3.12]{Casado} containing
$B_{\mathcal{A}}^{1,p}(\mathbb{R}^{d})$ as a subspace.

We are now able to define the $\Sigma$-convergence concept. Let $1\leq
p<\infty$ be a real number, and let $\Omega$ be an open set in $\mathbb{R}%
^{d}$. A sequence $(u_{\varepsilon})_{\varepsilon>0}\subset L^{p}(\Omega)$ is
said to:

\begin{itemize}
\item[(i)] \emph{weakly }$\Sigma$\emph{-converge} in $L^{p}(\Omega)$ to
$u_{0}\in L^{p}(\Omega;\mathcal{B}_{\mathcal{A}}^{p}(\mathbb{R}^{d}))$ if, as
$\varepsilon\rightarrow0$,
\begin{equation}
\int_{\Omega}u_{\varepsilon}(x)f\left(  x,\frac{x}{\varepsilon}\right)
dx\rightarrow\int_{\Omega}M(u_{0}(x,\cdot)f(x,\cdot))dx \label{3.1}%
\end{equation}
for any $f\in L^{p^{\prime}}(\Omega;\mathcal{A})$ ($p^{\prime}=p/(p-1)$);

\item[(ii)] \emph{strongly }$\Sigma$\emph{-converge} in $L^{p}(\Omega)$ to
$u_{0}\in L^{p}(\Omega;\mathcal{B}_{\mathcal{A}}^{p}(\mathbb{R}^{d}))$ if
(\ref{3.1}) holds and further $\left\Vert u_{\varepsilon}\right\Vert
_{L^{p}(\Omega)}\rightarrow\left\Vert u_{0}\right\Vert _{L^{p}(\Omega
;\mathcal{B}_{\mathcal{A}}^{p}(\mathbb{R}^{d}))}$.
\end{itemize}

We denote (i) by \textquotedblleft$u_{\varepsilon}\rightarrow u_{0}$ in
$L^{p}(\Omega)$-weak $\Sigma$\textquotedblright, and (ii) by \textquotedblleft%
$u_{\varepsilon}\rightarrow u_{0}$ in $L^{p}(\Omega)$-strong $\Sigma
$\textquotedblright. It is to be noted that this is a generalization of the
well known concept of two-scale convergence (for which, for example,
(\ref{3.1}) holds true provided that $\mathcal{A}=\mathcal{C}_{per}(Y)$).

The following are the main properties of the above concept.

\begin{itemize}
\item[(SC)$_{1}$] Any bounded ordinary sequence in $L^{p}(\Omega)$
($1<p<\infty$) possesses a subsequence that weakly $\Sigma$-converges in
$L^{p}(\Omega)$; see e.g., \cite[Theorem 3.1]{CMP} for the justification.

\item[(SC)$_{2}$] If $(u_{\varepsilon})_{\varepsilon\in E}$ is a bounded
sequence ($E$ an ordinary sequence of positive real numbers converging to
zero) in $W^{1,p}(\Omega)$, then there exist a subsequence $E^{\prime}$ of $E$
and a couple $(u_{0},u_{1})\in W^{1,p}(\Omega;I_{\mathcal{A}}^{p})\times
L^{p}(\Omega;B_{\#\mathcal{A}}^{1,p}(\mathbb{R}^{d}))$ such that

\begin{itemize}
\item[(i)] $u_{\varepsilon}\rightarrow u_{0}$ in $L^{p}(\Omega)$-weak $\Sigma$

\item[(ii)] $\frac{\partial u_{\varepsilon}}{\partial x_{j}}\rightarrow
\frac{\partial u_{0}}{\partial x_{j}}+\frac{\partial u_{1}}{\partial y_{j}}$
in $L^{p}(\Omega)$-weak $\Sigma\ \ (1\leq j\leq d)$.
\end{itemize}

\item[(SC)$_{3}$] If $u_{\varepsilon}\rightarrow u_{0}$ in $L^{p}(\Omega
)$-weak $\Sigma$ and $v_{\varepsilon}\rightarrow v_{0}$ in $L^{q}(\Omega
)$-strong $\Sigma$, then $u_{\varepsilon}v_{\varepsilon}\rightarrow u_{0}%
v_{0}$ in $L^{r}(\Omega)$-weak $\Sigma$, where $1\leq p,q,r<\infty$ and
$\frac{1}{p}+\frac{1}{q}=\frac{1}{r}$; see \cite[Theorem 3.5]{BJMA}\ for the justification.
\end{itemize}

The proof of (SC)$_{2}$ is more involved and will be done in the next section.
It is to be noted that (SC)$_{2}$ has been proved in \cite{EJDE2014} (see
Theorem 2.11 therein) using a Helmholtz type result. For the sake of
completeness, we repeat the proof here. The reason is that in the current
work, the corrector function $u_{1}(x,\cdot)$ is found in the space
$B_{\#\mathcal{A}}^{1,p}(\mathbb{R}^{d})$ whose elements are locally
integrable functions, in contrast to the space $\mathcal{B}_{\mathcal{A}%
}^{1,p}(\mathbb{R}^{d})$ (used in \cite{EJDE2014}) whose elements are not
usual locally integrable functions.

\subsection{Helmholtz-type decomposition theorem}

Let $u\in\mathcal{A}$ (where $\mathcal{A}$ is an algebra with mean value on
$\mathbb{R}^{d}$) and let $\varphi\in\mathcal{C}_{0}^{\infty}(\mathbb{R}^{d}%
)$. Since $u$ and $\varphi$ are uniformly continuous and $\mathcal{A}$ is
translation invariant, we have $u\ast\varphi\in\mathcal{A}$ (see the proof of
Proposition 2.3 in \cite{ACAP}), where here $\ast$ stands for the usual
convolution operator. More precisely, $u\ast\varphi\in\mathcal{A}^{\infty}$
since $D_{y}^{\alpha}(u\ast\varphi)=u\ast D_{y}^{\alpha}\varphi$ for any
$\alpha\in\mathbb{N}^{d}$. For $1\leq p<\infty$, let $u\in B_{\mathcal{A}}%
^{p}(\mathbb{R}^{d})$ and $\eta>0$, and choose $v\in\mathcal{A}$ such that
$\left\Vert u-v\right\Vert _{p}<\eta/(\left\Vert \varphi\right\Vert
_{L^{1}(\mathbb{R}^{d})}+1)$. Using Young's inequality, we have
\[
\left\Vert u\ast\varphi-v\ast\varphi\right\Vert _{p}\leq\left\Vert
\varphi\right\Vert _{L^{1}(\mathbb{R}^{d})}\left\Vert u-v\right\Vert _{p}%
<\eta,
\]
hence $u\ast\varphi\in B_{\mathcal{A}}^{p}(\mathbb{R}^{d})$ since
$v\ast\varphi\in\mathcal{A}$. Moreover, we have
\begin{equation}
\left\Vert u\ast\varphi\right\Vert _{p}\leq\left\vert \mathrm{supp}%
\varphi\right\vert ^{\frac{1}{p}}\left\Vert \varphi\right\Vert _{L^{p^{\prime
}}(\mathbb{R}^{d})}\left\Vert u\right\Vert _{p}, \label{2}%
\end{equation}
where \textrm{supp}$\varphi$ stands for the support of $\varphi$ and
$\left\vert \mathrm{supp}\varphi\right\vert $ its Lebesgue measure. Indeed, we
have
\[
\left\Vert u\ast\varphi\right\Vert _{p}=\left(  \underset{r\rightarrow+\infty
}{\lim\sup}\mathchoice {{\setbox0=\hbox{$\displaystyle{\textstyle
-}{\int}$ } \vcenter{\hbox{$\textstyle -$
}}\kern-.6\wd0}}{{\setbox0=\hbox{$\textstyle{\scriptstyle -}{\int}$ }
\vcenter{\hbox{$\scriptstyle -$
}}\kern-.6\wd0}}{{\setbox0=\hbox{$\scriptstyle{\scriptscriptstyle -}{\int}$
} \vcenter{\hbox{$\scriptscriptstyle -$
}}\kern-.6\wd0}}{{\setbox0=\hbox{$\scriptscriptstyle{\scriptscriptstyle
-}{\int}$ } \vcenter{\hbox{$\scriptscriptstyle -$ }}\kern-.6\wd0}}\!\int
_{B_{r}}\left\vert (u\ast\varphi)(y)\right\vert ^{p}dy\right)  ^{\frac{1}{p}%
},
\]
and
\begin{align*}
\int_{B_{r}}\left\vert (u\ast\varphi)(y)\right\vert ^{p}dy  &  =\int_{B_{r}%
}\left\vert \int_{\mathbb{R}^{d}}u(y-t)\varphi(t)dt\right\vert ^{p}dy\\
&  \leq\int_{B_{r}}\left(  \int_{\mathbb{R}^{d}}\left\vert \varphi
(t)\right\vert ^{1/p^{\prime}}\left\vert \varphi(t)\right\vert ^{1/p}%
\left\vert u(y-t)\right\vert dt\right)  ^{p}dy\\
&  \leq\left(  \int_{\mathbb{R}^{d}}\left\vert \varphi(t)\right\vert
dt\right)  ^{p/p^{\prime}}\int_{B_{r}}\int_{\mathbb{R}^{d}}\left\vert
\varphi(t)\right\vert \left\vert u(y-t)\right\vert ^{p}dtdy\\
&  =\left(  \int_{\mathbb{R}^{d}}\left\vert \varphi(t)\right\vert dt\right)
^{p/p^{\prime}}\int_{\mathbb{R}^{d}}\left\vert \varphi(t)\right\vert \left(
\int_{B_{r}}\left\vert u(y-t)\right\vert ^{p}dy\right)  dt.
\end{align*}
Dividing by $\left\vert B_{r}\right\vert $ and computing the $\lim
\sup_{r\rightarrow\infty}$, we obtain
\[
\left\Vert u\ast\varphi\right\Vert _{p}^{p}\leq\left(  \int_{\mathbb{R}^{d}%
}\left\vert \varphi(t)\right\vert dt\right)  ^{p/p^{\prime}}\int
_{\mathbb{R}^{d}}\left\vert \varphi(t)\right\vert \left\Vert u(\cdot
-t)\right\Vert _{p}^{p}dt.
\]
The inequality (\ref{2}) is therefore a consequence of the invariance under
translations, of the seminorm $\left\Vert \cdot\right\Vert _{p}$: $\left\Vert
u(\cdot-t)\right\Vert _{p}=\left\Vert u\right\Vert _{p}$, for all
$t\in\mathbb{R}^{d}$. The following Helmholtz-type result holds.

\begin{theorem}
\label{t1}Let $1<p<\infty$. Let $L$ be a bounded linear functional on
$(B_{\mathcal{A}}^{1,p^{\prime}}(\mathbb{R}^{d}))^{d}$ which vanishes on the
kernel of the divergence. Then there exists a function $f\in B_{\mathcal{A}%
}^{p}(\mathbb{R}^{d})$ such that $L=\nabla_{y}f$, in the sense that
\[
L(v)=-M(f\Div v)\text{ for all }v\in(B_{\mathcal{A}}^{1,p^{\prime}}%
(\mathbb{R}^{d}))^{d}\text{.}%
\]
Moreover $f+\mathcal{N}$ is unique modulo $I_{\mathcal{A}}^{p}$, that is, up
to an additive function $g\in B_{\mathcal{A}}^{p}(\mathbb{R}^{d})$ verifying
$\overline{\nabla}_{y}\varrho(g)=0$.
\end{theorem}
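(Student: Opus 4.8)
The plan is to realize $L$ as a gradient via a Hahn--Banach / Riesz-duality argument applied to the divergence operator, in the style of the classical de Rham theorem. First I would introduce the divergence operator $\Div\colon (B_{\mathcal{A}}^{1,p'}(\mathbb{R}^d))^d \to B_{\mathcal{A}}^{p'}(\mathbb{R}^d)$ and consider its range $R=\Div\big((B_{\mathcal{A}}^{1,p'}(\mathbb{R}^d))^d\big)\subset B_{\mathcal{A}}^{p'}(\mathbb{R}^d)$. Since $L$ vanishes on $\ker\Div$, it factors through $R$: there is a well-defined linear map $\widetilde L\colon R\to\mathbb{R}$ with $\widetilde L(\Div v)=L(v)$. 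The first key step is to show $\widetilde L$ is continuous for the $\|\cdot\|_{p'}$-norm on $R$, i.e. that $|L(v)|\le C\|\Div v\|_{p'}$. This is the point where one must work: given $v$ with $\Div v = w$, one needs a \emph{bounded} right inverse, i.e. a $\widetilde v$ with $\Div\widetilde v = w$ and $\|\widetilde v\|_{1,p'}\le C\|w\|_{p'}$; then $|L(v)| = |L(v-(v-\widetilde v)) | = |L(\widetilde v)|\le \|L\|\,\|\widetilde v\|_{1,p'}\le C\|w\|_{p'}$, using that $v-\widetilde v\in\ker\Div$.

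To construct such a bounded right inverse I would solve an auxiliary elliptic ("Poisson-type") problem in the Besicovitch setting: given $w\in B_{\mathcal{A}}^{p'}$, find $\phi$ with (roughly) $-\Delta_y\phi = w$ in the appropriate weak/duality sense and set $\widetilde v=-\nabla_y\phi$. The natural way to do this while staying inside $B_{\mathcal{A}}^{p'}$ is to exploit the mollification machinery established just before the theorem: convolution with $\varphi\in\mathcal{C}_0^\infty(\mathbb{R}^d)$ maps $B_{\mathcal{A}}^{p'}$ into itself with the bound \eqref{2}, and commutes with $\nabla_y$. Using a fundamental solution of the Laplacian (or, more cleanly, a Calder\'on--Zygmund / Riesz-transform argument transplanted to the $B_{\mathcal{A}}^{p'}$ scale, which works for $1<p'<\infty$ via the $L^{p'}_{loc}$ theory together with the translation-invariant seminorm and a limiting argument from $\mathcal{A}$) one gets $\|\nabla_y\phi\|_{p'}\le C\|w\|_{p'}$. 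I expect this boundedness of the solution operator to the divergence equation — transferring singular-integral estimates to the Besicovitch seminorm — to be the main obstacle; it is exactly where $1<p<\infty$ is used and where one leans on density of $\mathcal{A}$ and translation invariance.

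Once $\widetilde L$ is continuous on $R\subset B_{\mathcal{A}}^{p'}(\mathbb{R}^d)$, Hahn--Banach extends it to a continuous linear functional on all of $B_{\mathcal{A}}^{p'}(\mathbb{R}^d)$. By property $(\mathrm{P})_2$, this extension is represented by some $f\in B_{\mathcal{A}}^{p}(\mathbb{R}^d)$, unique modulo $\mathcal{N}$, via $\widetilde L(w)=M(fw)$ for all $w\in B_{\mathcal{A}}^{p'}(\mathbb{R}^d)$. Restricting to $w=\Div v$ gives $L(v)=M(f\,\Div v)$ for all $v\in (B_{\mathcal{A}}^{1,p'}(\mathbb{R}^d))^d$, which is precisely the claimed identity $L=-\nabla_y f$ (the sign being absorbed in the definition of $\widetilde L$, or one may simply replace $f$ by $-f$ to match $L(v)=-M(f\Div v)$).

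For the uniqueness statement: if $f_1,f_2\in B_{\mathcal{A}}^{p}(\mathbb{R}^d)$ both satisfy $L(v)=-M(f_i\Div v)$ for all $v$, then $g=f_1-f_2$ satisfies $M(g\,\Div v)=0$ for all $v\in (B_{\mathcal{A}}^{1,p'}(\mathbb{R}^d))^d$. Passing to the Banach quotient and recalling that $\overline{\partial}/\partial y_i$ is the (densely defined, closed) infinitesimal generator of the translation group, the identity $M(g\,\overline\Div v)=0$ for all $v$ means $\varrho(g)$ is weakly annihilated by every $\overline{\partial}/\partial y_i$ tested against $B_{\mathcal{A}}^{1,p'}$; integrating by parts in the group-generator sense forces $\overline\nabla_y\varrho(g)=0$, i.e. $\varrho(g)\in I_{\mathcal{A}}^{p}(\mathbb{R}^d)$ by \eqref{inv}. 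Hence $f$ is determined modulo $\mathcal{N}$ and modulo an additive invariant function, exactly as stated. The only care needed here is that "weakly annihilated by $\overline{\partial}/\partial y_i$" actually implies $\overline{\partial}\varrho(g)/\partial y_i=0$; this follows because $\mathcal{A}^\infty$ (hence smooth compactly-perturbed test elements, obtained again by mollification) is dense enough in $B_{\mathcal{A}}^{1,p'}$ to serve as a testing class, so the distributional derivative in the group sense vanishes.
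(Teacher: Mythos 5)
Your overall framework (factor $L$ through the range of the divergence, extend by Hahn--Banach, represent by duality $(\mathrm{P})_2$) is not the paper's route, and its crucial step fails. You reduce everything to the a priori estimate $|L(v)|\leq C\left\Vert \Div v\right\Vert _{p^{\prime}}$, to be obtained from a bounded right inverse of the divergence: given $w$, find $\widetilde{v}$ with $\Div\widetilde{v}=w$ and $\left\Vert \widetilde{v}\right\Vert _{1,p^{\prime}}\leq C\left\Vert w\right\Vert _{p^{\prime}}$, constructed as $\widetilde{v}=-\nabla_{y}\phi$ with $-\Delta_{y}\phi=w$. This object does not exist in general. First, the Calder\'on--Zygmund/Riesz-transform machinery only controls $\nabla^{2}\phi$ by $\Delta\phi$; it gives nothing on $\nabla\phi$ itself, and $\left\Vert \widetilde{v}\right\Vert _{1,p^{\prime}}$ contains $\left\Vert \nabla\phi\right\Vert _{p^{\prime}}$. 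Second, and fatally, low frequencies kill any right inverse: take $\mathcal{A}=AP(\mathbb{R}^{d})$, $p=p^{\prime}=2$, and $w(y)=\cos(\xi\cdot y)$ with $|\xi|$ small. Then $\left\Vert w\right\Vert _{2}=2^{-1/2}$, while comparing Fourier--Bohr coefficients shows that \emph{any} $\widetilde{v}\in(B_{\mathcal{A}}^{1,2})^{d}$ with $\Div\widetilde{v}=w$ satisfies $\left\Vert \widetilde{v}\right\Vert _{2}\geq c/\left\vert \xi\right\vert $ (for your specific choice, $\left\Vert \nabla\phi\right\Vert _{2}=\left\vert \xi\right\vert ^{-1}\left\Vert w\right\Vert _{2}$). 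So the divergence has no bounded right inverse on the Besicovitch scale, the range of $\Div$ is not closed, and the continuity of your $\widetilde{L}$ on that range --- which is essentially equivalent to the conclusion of the theorem --- cannot be obtained this way. The reduction therefore replaces the theorem by a statement at least as hard, and the proposed tool for it is unavailable. (Your uniqueness argument is fine and is essentially the paper's.)

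The paper avoids solving any auxiliary equation in $B_{\mathcal{A}}^{p^{\prime}}(\mathbb{R}^{d})$. It localizes instead: for each $u\in\mathcal{A}^{\infty}$ it forms the genuine distribution $L_{u}(\varphi)=L(u\ast\varphi)$ on $\mathbb{R}^{d}$, applies the \emph{classical} De Rham theorem to get a local potential $S(u)$ with $\nabla S(u)=L_{u}$, proves translation equivariance $S(\tau_{y}u)=\tau_{-y}S(u)$ together with the quantitative bound $\left\Vert S(u)\right\Vert _{L^{p^{\prime}}(B_{r})}\leq C_{r}\left\Vert L\right\Vert \left\vert B_{r}\right\vert ^{1/p^{\prime}}\left\Vert u\right\Vert _{p^{\prime}}$ (via the Bogovskii-type solvability of $\Div\varphi=g$ on balls), and only then uses the duality $(\mathrm{P})_2$ on the functional $u\mapsto S(u)(0)$ to produce $f$, recovering $L=\nabla_{y}f$ by mollification. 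If you want to keep your factorization scheme, the missing input is precisely an estimate of $|L(v)|$ by $\left\Vert \Div v\right\Vert _{p^{\prime}}$ that does not pass through a right inverse; the localization/translation-invariance argument is what supplies it.
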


\begin{proof}
Fix $u$ in $\mathcal{A}^{\infty}$ and define $L_{u}:\mathcal{D}(\mathbb{R}%
^{d})^{d}\rightarrow\mathbb{R}$ by
\[
L_{u}(\varphi)=L(u\ast\varphi)\text{ for }\varphi=(\varphi_{i})\in
\mathcal{D}(\mathbb{R}^{d})^{d}%
\]
where $u\ast\varphi=(u\ast\varphi_{i})_{i}\in(\mathcal{A}^{\infty})^{d}$. Then
it is easy to see from (\ref{2}) that $L_{u}\in\mathcal{D}^{\prime}%
(\mathbb{R}^{d})^{d}$. Moreover if $\Div\varphi=0$ then $\Div(u\ast
\varphi)=u\ast\Div\varphi=0$, so that $L_{u}(\varphi)=0$, that is, $L_{u}$
vanishes on the kernel of the divergence in $\mathcal{D}(\mathbb{R}^{d})^{d}$.
By the classical De Rham theorem (see \cite[Theorem 17', page 114]{De Rham} or
\cite{Simon}), there exists a distribution $S(u)\in\mathcal{D}^{\prime
}(\mathbb{R}^{d})$ such that
\begin{equation}
L_{u}=\nabla S(u). \label{6.1}%
\end{equation}
We derive an operator
\[
S:\mathcal{A}^{\infty}\rightarrow\mathcal{D}^{\prime}(\mathbb{R}%
^{d});\ u\mapsto S(u)
\]
satisfying:

\begin{itemize}
\item[(i)] $S(\tau_{y}u)=\tau_{-y}S(u)$ for all $y\in\mathbb{R}^{d}$ and all
$u\in\mathcal{A}^{\infty}$, where $\tau_{y}v=v(\cdot+y)$;

\item[(ii)] $S$ maps linearly and continuously $\mathcal{A}^{\infty}$ into
$L_{loc}^{p^{\prime}}(\mathbb{R}^{d})$;

\item[(iii)] There is a positive constant $C_{r}$ (that is locally bounded as
a function of $r$) such that
\[
\left\Vert S(u)\right\Vert _{L^{p^{\prime}}(B_{r})}\leq C_{r}\left\Vert
L\right\Vert \left\vert B_{r}\right\vert ^{\frac{1}{p^{\prime}}}\left\Vert
u\right\Vert _{p^{\prime}}\ \ \forall r>0.
\]

\end{itemize}

The property (i) stems from the obvious equality
\begin{equation}
L_{\tau_{y}u}(\varphi)=L_{u}(\tau_{y}\varphi)\ \ \forall y\in\mathbb{R}^{d}.
\label{6.2}%
\end{equation}
Indeed we have
\begin{align*}
L_{\tau_{y}u}(\varphi)  &  =\left\langle \nabla S(\tau_{y}u),\varphi
\right\rangle \text{ \ (by (\ref{6.1}))}\\
&  =-\left\langle S(\tau_{y}u),\Div\varphi\right\rangle \\
&  =L_{u}(\tau_{y}\varphi)\text{ \ (by (\ref{6.2}))}\\
&  =\left\langle \nabla S(u),\tau_{y}\varphi\right\rangle =-\left\langle
S(u),\Div(\tau_{-y}\varphi)\right\rangle \\
&  =-\left\langle S(u),\tau_{y}\Div\varphi\right\rangle =-\left\langle
\tau_{-y}S(u),\Div\varphi\right\rangle \\
&  =\left\langle \nabla(\tau_{-y}S(u)),\varphi\right\rangle .
\end{align*}
It follows that $\nabla(\tau_{-y}S(u))=\nabla S(\tau_{y}u)$, that is,
$S(\tau_{y}u)-\tau_{-y}S(u)$ is constant. Actually $S(\tau_{y}u)-\tau
_{-y}S(u)=0$. To see this, let $g\in\mathcal{C}_{0}^{\infty}(B_{r})$ with
$\int_{B_{r}}gdy=0$; then by \cite[Lemma 3.15]{Novotny} there exists
$\varphi\in\mathcal{C}_{0}^{\infty}(B_{r})^{d}$ such that $\Div\varphi=g$,
thus, from the above series of equalities,
\[
L_{\tau_{y}u}(\varphi)=-\left\langle S(\tau_{y}u),g\right\rangle
=-\left\langle \tau_{-y}S(u),g\right\rangle ,
\]
that is, $\left\langle S(\tau_{y}u)-\tau_{-y}S(u),g\right\rangle =0$, so that
the above constant is zero. Hence
\[
S(\tau_{y}u)=\tau_{-y}S(u)\ \ \forall y\in\mathbb{R}^{d}.
\]
Hence (i) follows thereby.

Let us now check (ii) and (iii). We begin by noticing that $S$ is trivially
linear. With this in mind, let $\varphi\in\mathcal{D}(\mathbb{R}^{d})^{d}$
with \textrm{supp}$\varphi_{i}\subset B_{r}$ for all $1\leq i\leq d$. Then by
(\ref{2})
\begin{align*}
\left\vert L_{u}(\varphi)\right\vert  &  =\left\vert L(u\ast\varphi
)\right\vert \\
&  \leq\left\Vert L\right\Vert \left\Vert u\ast\varphi\right\Vert
_{(\mathcal{B}_{\mathcal{A}}^{1,p^{\prime}})^{d}}\\
&  \leq\max_{1\leq i\leq d}\left\vert \mathrm{supp}\varphi_{i}\right\vert
^{\frac{1}{p^{\prime}}}\left\Vert L\right\Vert \left\Vert u\right\Vert
_{p^{\prime}}\left\Vert \varphi\right\Vert _{W^{1,p}(B_{r})^{d}}.
\end{align*}
Hence, as Supp$\varphi_{i}\subset B_{r}$ ($1\leq i\leq d$),
\begin{equation}
\left\Vert L_{u}\right\Vert _{W^{-1,p^{\prime}}(B_{r})^{d}}\leq\left\Vert
L\right\Vert \left\vert B_{r}\right\vert ^{\frac{1}{p^{\prime}}}\left\Vert
u\right\Vert _{p^{\prime}}. \label{3}%
\end{equation}
As above, let $g\in\mathcal{C}_{0}^{\infty}(B_{r})$ with $\int_{B_{r}}gdy=0$;
then by \cite[Lemma 3.15]{Novotny} there exists $\varphi\in\mathcal{C}%
_{0}^{\infty}(B_{r})^{d}$ such that $\Div\varphi=g$ and $\left\Vert
\varphi\right\Vert _{W^{1,p}(B_{r})^{d}}\leq C(p,B_{r})\left\Vert g\right\Vert
_{L^{p}(B_{r})}$. We have
\begin{align*}
\left\vert \left\langle S(u),g\right\rangle \right\vert  &  =\left\vert
-\left\langle \nabla S(u),\varphi\right\rangle \right\vert =\left\vert
\left\langle L_{u},\varphi\right\rangle \right\vert \\
&  \leq\left\Vert L_{u}\right\Vert _{W^{-1,p^{\prime}}(B_{r})^{d}}\left\Vert
\varphi\right\Vert _{W^{1,p}(B_{r})^{d}}\\
&  \leq C(p,B_{r})\left\Vert L\right\Vert \left\vert B_{r}\right\vert
^{\frac{1}{p^{\prime}}}\left\Vert u\right\Vert _{p^{\prime}}\left\Vert
g\right\Vert _{L^{p}(B_{r})},
\end{align*}
and by a density argument, we get that $S(u)\in(L^{p}(B_{r})/\mathbb{R}%
)^{\prime}=L^{p^{\prime}}(B_{r})/\mathbb{R}$ for any $r>0$, where
$L^{p^{\prime}}(B_{r})/\mathbb{R}=\{\psi\in L^{p^{\prime}}(B_{r}):\int_{B_{r}%
}\psi dy=0\}$. The properties (ii) and (iii) therefore follow from the above
series of inequalities.

In view of (ii) one has
\begin{equation}
L_{u}(\varphi)=-\int_{\mathbb{R}^{d}}S(u)\Div\varphi dy\text{ for all }%
\varphi\in\mathcal{D}(\mathbb{R}^{d})^{d}. \label{4}%
\end{equation}

\begin{claim}
We claim that $S(u)\in\mathcal{C}^{\infty}(\mathbb{R}^{d})$ for all
$u\in\mathcal{A}^{\infty}$.
\end{claim}

Indeed let $e_{i}=(\delta_{ij})_{1\leq j\leq d}$ ($\delta_{ij}$ the Kronecker
delta). Then owing to (i), (ii) and (iii) above, we have
\begin{align*}
\left\Vert \frac{S(u)(\cdot+te_{i})-S(u)}{t}-S\left(  \frac{\partial
u}{\partial y_{i}}\right)  \right\Vert _{L^{p^{\prime}}(B_{r})}  &
=\left\Vert S\left(  \frac{u(\cdot+te_{i})-u}{t}-\frac{\partial u}{\partial
y_{i}}\right)  \right\Vert _{L^{p^{\prime}}(B_{r})}\\
&  \leq c\left\Vert \frac{u(\cdot+te_{i})-u}{t}-\frac{\partial u}{\partial
y_{i}}\right\Vert _{p^{\prime}}.
\end{align*}
Hence, taking the limit as $t\rightarrow0$ in the above inequality yields
\[
\frac{\partial}{\partial y_{i}}S(u)=S\left(  \frac{\partial u}{\partial y_{i}%
}\right)  \text{ for all }1\leq i\leq d.
\]
Repeating the process above, it emerges
\[
D_{y}^{\alpha}S(u)=S(D_{y}^{\alpha}u)\text{ for all }\alpha\in\mathbb{N}%
^{d}\text{.}%
\]
So all the weak derivatives of $S(u)$ of any order belong to $L_{loc}%
^{p^{\prime}}(\mathbb{R}^{d})$. Our claim is therefore a consequence of
\cite[Theorem XIX, p. 191]{LS}.

This being so, we derive from the mean value theorem the existence of $\xi\in
B_{r}$ such that
\[
S(u)(\xi)=\mathchoice {{\setbox0=\hbox{$\displaystyle{\textstyle
-}{\int}$ } \vcenter{\hbox{$\textstyle -$
}}\kern-.6\wd0}}{{\setbox0=\hbox{$\textstyle{\scriptstyle -}{\int}$ }
\vcenter{\hbox{$\scriptstyle -$
}}\kern-.6\wd0}}{{\setbox0=\hbox{$\scriptstyle{\scriptscriptstyle -}{\int}$
} \vcenter{\hbox{$\scriptscriptstyle -$
}}\kern-.6\wd0}}{{\setbox0=\hbox{$\scriptscriptstyle{\scriptscriptstyle
-}{\int}$ } \vcenter{\hbox{$\scriptscriptstyle -$ }}\kern-.6\wd0}}\!\int
_{B_{r}}S(u)dy.
\]
On the other hand, the map $u\mapsto S(u)(0)$ is a linear functional on
$\mathcal{A}^{\infty}$, and by the last equality above and by (iii), we get
\begin{align*}
\left\vert S(u)(0)\right\vert  &  \leq\underset{r\rightarrow0}{\lim\sup
}\mathchoice {{\setbox0=\hbox{$\displaystyle{\textstyle
-}{\int}$ } \vcenter{\hbox{$\textstyle -$
}}\kern-.6\wd0}}{{\setbox0=\hbox{$\textstyle{\scriptstyle -}{\int}$ } \vcenter{\hbox{$\scriptstyle -$
}}\kern-.6\wd0}}{{\setbox0=\hbox{$\scriptstyle{\scriptscriptstyle -}{\int}$
} \vcenter{\hbox{$\scriptscriptstyle -$
}}\kern-.6\wd0}}{{\setbox0=\hbox{$\scriptscriptstyle{\scriptscriptstyle
-}{\int}$ } \vcenter{\hbox{$\scriptscriptstyle -$ }}\kern-.6\wd0}}\!\int
_{B_{r}}\left\vert S(u)\right\vert dy\\
&  \leq\underset{r\rightarrow0}{\lim\sup}\left\vert B_{r}\right\vert
^{-\frac{1}{p^{\prime}}}\left(  \int_{B_{r}}\left\vert S(u)\right\vert
^{p^{\prime}}dy\right)  ^{\frac{1}{p^{\prime}}}\\
&  \leq c\left\Vert L\right\Vert \left\Vert u\right\Vert _{p^{\prime}}.
\end{align*}
Hence, defining $\widetilde{S}:\mathcal{A}^{\infty}\rightarrow\mathbb{R}$ by
$\widetilde{S}(v)=S(v)(0)$ for $v\in\mathcal{A}^{\infty}$, we get that
$\widetilde{S}$ is a linear functional on $\mathcal{A}^{\infty}$ satisfying
\begin{equation}
\left\vert \widetilde{S}(v)\right\vert \leq c\left\Vert L\right\Vert
\left\Vert v\right\Vert _{p^{\prime}}\ \ \forall v\in\mathcal{A}^{\infty
}\text{.} \label{5}%
\end{equation}
We infer from both the density of $\mathcal{A}^{\infty}$ in $B_{\mathcal{A}%
}^{p^{\prime}}(\mathbb{R}^{d})$ and (\ref{5}) the \ existence of a function
$f\in B_{\mathcal{A}}^{p}(\mathbb{R}^{d})$ (recall that the topological dual
of $B_{\mathcal{A}}^{p^{\prime}}(\mathbb{R}^{d})$ is $B_{\mathcal{A}}%
^{p}(\mathbb{R}^{d})$ for $1<p<\infty$, the duality here being defined by
$\left\langle u,v\right\rangle =M(uv)$) with $\left\Vert f\right\Vert _{p}\leq
c\left\Vert L\right\Vert $ such that
\[
\widetilde{S}(v)=M(fv)\text{ for all }v\in B_{\mathcal{A}}^{p^{\prime}%
}(\mathbb{R}^{d})\text{.}%
\]
In particular
\[
S(u)(0)=M(fu)\ \ \forall u\in\mathcal{A}^{\infty}.
\]
Now, let $u\in\mathcal{A}^{\infty}$ and let $y\in\mathbb{R}^{d}$. By (i) we
have
\[
S(u)(y)=S(\tau_{-y}u)(0)=M(u(\cdot-y)f)\text{.}%
\]
Thus,
\begin{align*}
L_{u}(\varphi)  &  =L(u\ast\varphi)=-\int_{\mathbb{R}^{d}}S(u)(y)\Div\varphi
dy\text{ \ (by (\ref{4}))}\\
&  =-\int_{\mathbb{R}^{d}}\left(  M(f\tau_{-y}u)\right)  \Div\varphi dy\\
&  =-M\left(  \left[  \int_{\mathbb{R}^{d}}\tau_{-y}u(\cdot)\Div\varphi
dy\right]  f(\cdot)\right) \\
&  =-M\left(  f\left[  u\ast\Div\varphi\right]  \right) \\
&  =-M\left(  f\left[  \Div(u\ast\varphi)\right]  \right) \\
&  =\left\langle \nabla f,u\ast\varphi\right\rangle .
\end{align*}
Finally let $v\in(B_{\mathcal{A}}^{1,p^{\prime}}(\mathbb{R}^{d}))^{d}$ and let
$(\varphi_{n})_{n}\subset\mathcal{D}(\mathbb{R}^{d})$ be a mollifier. Then
$v\ast\varphi_{n}\rightarrow v$ in $(B_{\mathcal{A}}^{1,p^{\prime}}%
(\mathbb{R}^{d}))^{d}$ as $n\rightarrow\infty$, where $v\ast\varphi_{n}%
=(v_{i}\ast\varphi_{n})_{1\leq i\leq d}$. We have $v\ast\varphi_{n}%
\in(\mathcal{A}^{\infty})^{d}$ and $L(v\ast\varphi_{n})\rightarrow L(v)$ by
the continuity of $L$. On the other hand
\[
M\left(  f\left[  \Div(v\ast\varphi_{n})\right]  \right)  \rightarrow M\left(
f\Div v\right)  .
\]
We deduce that $L$ and $\nabla_{y}f$ agree on $(B_{\mathcal{A}}^{1,p^{\prime}%
}(\mathbb{R}^{d}))^{d}$, i.e., $L=\nabla_{y}f$.

For the uniqueness, let $f_{1}$ and $f_{2}$ in $B_{\mathcal{A}}^{p}%
(\mathbb{R}^{d})$ be such that $L=\nabla_{y}f_{1}=\nabla_{y}f_{2}$, then
$\overline{\nabla}_{y}(\varrho(f_{1}-f_{2}))=0$, which means that $f_{1}%
-f_{2}+\mathcal{N}\in I_{\mathcal{A}}^{p}$.
\end{proof}

As a result of Theorem \ref{t1}, we have the

\begin{corollary}
\label{c1}Let $\mathbf{f}+\mathcal{N}\in(\mathcal{B}_{\mathcal{A}}%
^{p}(\mathbb{R}^{d}))^{d}$ be such that
\[
M(\mathbf{f}\cdot\mathbf{g})=0\ \text{ }\forall\mathbf{g}\in(\mathcal{A}%
^{\infty})^{d}\text{ with }\Div\mathbf{g}=0.
\]
Then there exists a function $u\in B_{\#\mathcal{A}}^{1,p}(\mathbb{R}^{d})$
whose class $u+\mathcal{N}$ modulo $I_{\mathcal{A}}^{p}$ is unique, such that
$\mathbf{f}=\nabla u$.
\end{corollary}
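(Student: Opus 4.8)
The plan is to recognize the hypothesis as saying that the mean‑value functional ``pairing against $\mathbf{f}$'' annihilates divergence‑free fields, invoke Theorem \ref{t1} to obtain a Besicovitch potential, and then upgrade that potential to a genuine $W^{1,p}_{loc}$ function lying in $B_{\#\mathcal{A}}^{1,p}(\mathbb{R}^{d})$.

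First I would fix a representative $\mathbf{f}\in(B_{\mathcal{A}}^{p}(\mathbb{R}^{d}))^{d}$ of the given class and set $L(v)=M(\mathbf{f}\cdot v)$ for $v\in(B_{\mathcal{A}}^{1,p'}(\mathbb{R}^{d}))^{d}$. By the H\"older inequality for the mean value (property (P)$_{1}$) and the definition of $\|\cdot\|_{1,p'}$, $|L(v)|\le\|\mathbf{f}\|_{p}\|v\|_{p'}\le\|\mathbf{f}\|_{p}\|v\|_{1,p'}$, so $L$ is bounded and linear, and independent of the representative (two of them differ by an element of $\mathcal{N}$). The hypothesis is precisely that $L$ kills the divergence‑free vector fields of $(\mathcal{A}^{\infty})^{d}$; since in the proof of Theorem \ref{t1} the functional is tested only against fields $u\ast\varphi$ with $u\in\mathcal{A}^{\infty}$ and $\operatorname{div}\varphi=0$ — which are divergence‑free elements of $(\mathcal{A}^{\infty})^{d}$ — this suffices (alternatively, a mollification of a divergence‑free $v$, together with the continuity of $L$, extends the vanishing to the whole kernel of the divergence). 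Theorem \ref{t1} then yields $f\in B_{\mathcal{A}}^{p}(\mathbb{R}^{d})$, unique modulo $I_{\mathcal{A}}^{p}$, with $M(\mathbf{f}\cdot v)=-M(f\operatorname{div}v)$ for all $v$; testing with $v=(w\delta_{ij})_{j}$, $w\in\mathcal{A}^{\infty}$, gives $M(\mathbf{f}_{i}w)=-M(f\,\partial w/\partial y_{i})$, so $\varrho(f)$ lies in the domain of each $\overline{\partial}/\partial y_{i}$ with $\overline{\nabla}_{y}\varrho(f)=\varrho(\mathbf{f})$.

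The main work is then to realize $f$ by a locally Sobolev function, because $f$ itself need not lie in $W^{1,p}_{loc}$. I would regularize: put $f_{n}=f\ast\varphi_{n}$ with $(\varphi_{n})$ a mollifier. By the preamble to Theorem \ref{t1}, $f\ast\psi\in B_{\mathcal{A}}^{p}(\mathbb{R}^{d})$ for $\psi\in\mathcal{C}_{0}^{\infty}(\mathbb{R}^{d})$; with $\psi=\varphi_{n}$ and $\psi=\partial_{i}\varphi_{n}$ this gives $f_{n}\in B_{\mathcal{A}}^{1,p}(\mathbb{R}^{d})$, and $f_{n}$ is genuinely $\mathcal{C}^{\infty}$. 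Using that convolution commutes with $\varrho$, with the group $\mathcal{T}(\cdot)$ and with its generators — or, concretely, applying $M(\mathbf{f}_{i}w)=-M(f\,\partial w/\partial y_{i})$ to the translates $\tau_{t}w$ to get $M((\partial_{i}f_{n})w)=M((\mathbf{f}_{i}\ast\varphi_{n})w)$ for all $w\in\mathcal{A}^{\infty}$ — one obtains $\nabla f_{n}=\mathbf{f}\ast\varphi_{n}$ modulo $\mathcal{N}$. Since $\mathbf{f}\ast\varphi_{n}\to\mathbf{f}$ in $(B_{\mathcal{A}}^{p}(\mathbb{R}^{d}))^{d}$ (established as at the end of the proof of Theorem \ref{t1}) and, choosing $\mathbf{g}=e_{i}$ in the hypothesis, $M(\mathbf{f}_{i})=0$, the functions $g_{n}(x)=f_{n}(x)-M(f_{n})-M(\nabla f_{n})\cdot x$ lie in $B_{\#\mathcal{A}}^{1,p}(\mathbb{R}^{d})$, satisfy $M(\nabla g_{n})=0$, and form a Cauchy sequence in $B_{\#\mathcal{A}}^{1,p}(\mathbb{R}^{d})$; by completeness of that space \cite[Theorem 3.12]{Casado} they converge to some $u$ there, and passing to the limit gives $\nabla u=\mathbf{f}$ modulo $\mathcal{N}$ with $M(\nabla u)=0$, i.e. $u\in B_{\#\mathcal{A}}^{1,p}(\mathbb{R}^{d})$ and $\mathbf{f}=\nabla u$. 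For uniqueness, if $\mathbf{f}=\nabla u'$ as well then $\|\nabla(u-u')\|_{p}=0$, so $\overline{\nabla}_{y}\varrho(u-u')=0$ and $u-u'$ is invariant by (\ref{inv}); this expresses the uniqueness of $u+\mathcal{N}$ modulo $I_{\mathcal{A}}^{p}$, consistent with the uniqueness of $f$ in Theorem \ref{t1}. The hard part is this regularization step — reconciling the ``weak'' Besicovitch derivative produced by Theorem \ref{t1} with a genuine weak derivative — which is made possible by the stability of the differential structure of $\mathcal{B}_{\mathcal{A}}^{p}(\mathbb{R}^{d})$ under convolution and by the completeness of $B_{\#\mathcal{A}}^{1,p}(\mathbb{R}^{d})$.
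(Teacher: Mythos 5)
Your proposal is correct, and its skeleton is the same as the paper's: define $L(\mathbf{v})=M(\mathbf{f}\cdot\mathbf{v})$ on $(B_{\mathcal{A}}^{1,p'}(\mathbb{R}^{d}))^{d}$, check it is bounded and vanishes on the kernel of the divergence, invoke Theorem \ref{t1}, and prove uniqueness modulo $I_{\mathcal{A}}^{p}$ exactly as there. Where you diverge is in the passage from the potential $f$ of Theorem \ref{t1} --- which satisfies only the mean-value duality relation $L(v)=-M(f\Div v)$ --- to an element of $B_{\#\mathcal{A}}^{1,p}(\mathbb{R}^{d})$, i.e.\ a genuine $W_{loc}^{1,p}$ function whose distributional gradient is $\mathbf{f}$. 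The paper disposes of this in one line (``$B_{\mathcal{A}}^{p}\subset L_{loc}^{p}$, so $u\in W_{loc}^{1,p}$ and $\nabla_{y}u=\mathbf{f}$''), which is terse at best, since the mean value cannot detect local modifications of $f$ and a generic representative of $f+\mathcal{N}$ need not have $\mathbf{f}$ as its distributional gradient; your mollification argument ($f_{n}=f\ast\varphi_{n}$ smooth, $\nabla f_{n}=\mathbf{f}\ast\varphi_{n}$ modulo $\mathcal{N}$, Cauchy in the gradient norm, completeness of $B_{\#\mathcal{A}}^{1,p}(\mathbb{R}^{d})$ from \cite[Theorem 3.12]{Casado}) produces an explicit good representative and is the more defensible route. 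Two smaller differences are also fine: you get $M(\mathbf{f})=0$ by testing with the constant divergence-free fields $\mathbf{g}=e_{i}$, whereas the paper derives it from $M(\nabla_{y}u\ast\varphi)=M(u\ast\nabla_{y}\varphi)=0$; and your affine correction $-M(\nabla f_{n})\cdot x$ is actually the zero function once $M(\mathbf{f})=0$ is known, so it could be dropped. In short: same key lemma and same overall architecture, but your version buys a rigorous justification of the membership $u\in B_{\#\mathcal{A}}^{1,p}(\mathbb{R}^{d})$ that the paper only asserts.
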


\begin{proof}
Define $L:(B_{\mathcal{A}}^{1,p^{\prime}}(\mathbb{R}^{d}))^{d}\rightarrow
\mathbb{R}$ by $L(\mathbf{v})=M(\mathbf{f}\cdot\mathbf{v})$. Then $L$ lies in
$\left[  (B_{\mathcal{A}}^{1,p^{\prime}}(\mathbb{R}^{d}))^{d}\right]
^{\prime}$, and it follows from Theorem \ref{t1} the existence of $u\in
B_{\mathcal{A}}^{p}(\mathbb{R}^{d})$ such that $\mathbf{f}=\nabla_{y}u$. This
shows that $u\in B_{\#\mathcal{A}}^{1,p}(\mathbb{R}^{d})$. Indeed
$B_{\mathcal{A}}^{p}(\mathbb{R}^{d})\subset L_{loc}^{p}(\mathbb{R}^{d})$, so
that $u\in W_{loc}^{1,p}(\mathbb{R}^{d})$ and $\nabla_{y}u=\mathbf{f}%
\in(B_{\mathcal{A}}^{p}(\mathbb{R}^{d}))^{d}$. Moreover let $\varphi
\in\mathcal{D}(\mathbb{R}^{d})$ be such that $\int_{\mathbb{R}^{d}}\varphi
dy=1$; then $M(\nabla_{y}u\ast\varphi)=M(\nabla_{y}u)\int_{\mathbb{R}^{d}%
}\varphi dy=M(\nabla_{y}u)$. However $M(\nabla_{y}u\ast\varphi)=M(u\ast
\nabla_{y}\varphi)=M(u)\int_{\mathbb{R}^{d}}\nabla_{y}\varphi dy=0$. The
uniqueness is shown as in Theorem \ref{t1}.
\end{proof}

\subsection{One application to sigma-convergence}

Our aim in this subsection is to prove the compactness result (SC)$_{2}$
stated in Section \ref{sec2}. Throughout this section, $\Omega$ is an open
subset of $\mathbb{R}^{d}$, and unless otherwise specified, $\mathcal{A}$ is
an algebra with mean value on $\mathbb{R}^{d}$. The following result holds true.

\begin{theorem}
\label{t2.3}Let $1<p<\infty$. Let $(u_{\varepsilon})_{\varepsilon\in E}$ be a
bounded sequence in $W^{1,p}(\Omega)$. Then there exist a subsequence
$E^{\prime}$ of $E$, and a couple $(u_{0},u_{1})\in W^{1,p}(\Omega
;I_{\mathcal{A}}^{p})\times L^{p}(\Omega;B_{\#\mathcal{A}}^{1,p}%
(\mathbb{R}^{d}))$ such that, as $E^{\prime}\ni\varepsilon\rightarrow0$,
\[
u_{\varepsilon}\rightarrow u_{0}\ \text{in }L^{p}(\Omega)\text{-weak }%
\Sigma\text{;}%
\]%
\[
\frac{\partial u_{\varepsilon}}{\partial x_{i}}\rightarrow\frac{\partial
u_{0}}{\partial x_{i}}+\frac{\partial u_{1}}{\partial y_{i}}\text{\ in }%
L^{p}(\Omega)\text{-weak }\Sigma\text{, }1\leq i\leq d.
\]

\end{theorem}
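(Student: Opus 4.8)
The plan is to follow the classical two-scale/$\Sigma$-convergence compactness scheme, adapted to algebras with mean value, with the Helmholtz-type Theorem \ref{t1} (equivalently Corollary \ref{c1}) playing the role usually played by an explicit decomposition of divergence-free fields. First I would extract a subsequence $E'$ along which the bounded family $(u_\varepsilon)$ in $W^{1,p}(\Omega)$ admits, by (SC)$_1$, weak $\Sigma$-limits $v_0\in L^p(\Omega;\mathcal{B}_{\mathcal A}^p(\mathbb R^d))$ for $u_\varepsilon$ and $\mathbf{v}=(v_j)\in L^p(\Omega;\mathcal{B}_{\mathcal A}^p(\mathbb R^d))^d$ for $\nabla u_\varepsilon$. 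Passing to a further subsequence, one may also assume $u_\varepsilon\rightharpoonup \bar u$ weakly in $W^{1,p}(\Omega)$; testing the weak $\Sigma$-limits against functions of the form $\psi(x)g(x/\varepsilon)$ with $g\in\mathcal A^\infty$ having mean zero, and against $\psi$ alone, identifies the mean parts: $M(v_0(x,\cdot))=\bar u(x)$ a.e. and $M(v_j(x,\cdot))=\partial\bar u/\partial x_j$. I set $u_0:=\bar u$, viewed as an element of $W^{1,p}(\Omega;I_{\mathcal A}^p)$ (constants are invariant), which disposes of the first convergence.

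Next I would show that $v_0$ itself depends only on $x$, i.e. $v_0(x,\cdot)=u_0(x)$ in $\mathcal B_{\mathcal A}^p(\mathbb R^d)$, which is the standard "the leading term sees only the slow variable" fact. The argument is the usual oscillating test-function computation: for $\psi\in\mathcal C_0^\infty(\Omega)$ and $g\in\mathcal A^\infty$, write $\int_\Omega u_\varepsilon(x)\,\varepsilon\,\partial_{x_i}\!\big(\psi(x)g(x/\varepsilon)\big)dx$, which tends to $0$ because $u_\varepsilon$ is bounded in $L^p$ and the test function is $O(\varepsilon)$ in $L^{p'}$; expanding the derivative and using the weak $\Sigma$-convergence of $u_\varepsilon$ yields $\int_\Omega M\big(v_0(x,\cdot)\,\partial_{y_i}g\big)\psi(x)\,dx=0$ for all such $\psi,g$, hence $\overline{\partial}_{y_i}\varrho(v_0(x,\cdot))=0$ for a.e.\ $x$, so $v_0(x,\cdot)\in I_{\mathcal A}^p$, and by the mean-value identification it equals $u_0(x)$.

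The core step is the construction of $u_1$. Fix $i$ and consider the "fluctuation" $w_i(x,\cdot):=v_i(x,\cdot)-\partial u_0/\partial x_i\in\mathcal B_{\mathcal A}^p(\mathbb R^d)$; by the mean identification, $M(w_i(x,\cdot))=0$. The plan is to prove that $\mathbf w(x,\cdot)=(w_1,\dots,w_d)(x,\cdot)$ is, for a.e.\ $x$, orthogonal (under the $M$-pairing) to every divergence-free $\mathbf g\in(\mathcal A^\infty)^d$, and then invoke Corollary \ref{c1} to obtain $u_1(x,\cdot)\in B_{\#\mathcal A}^{1,p}(\mathbb R^d)$ with $\nabla_y u_1(x,\cdot)=\mathbf w(x,\cdot)$; measurability of $x\mapsto u_1(x,\cdot)$ and the $L^p$ bound follow from the estimate $\|u_1(x,\cdot)\|_{\#,p}=\|\mathbf w(x,\cdot)\|_p\le\|\mathbf v(x,\cdot)\|_p+|\nabla u_0(x)|$, together with a standard selection/closedness argument (the map is defined by a continuous linear functional depending measurably on $x$). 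To get the orthogonality, take $\mathbf g=(g_j)\in(\mathcal A^\infty)^d$ with $\operatorname{div}_y\mathbf g=0$, set $\mathbf g^\varepsilon(x)=\mathbf g(x/\varepsilon)$, and test the identity $\int_\Omega \nabla u_\varepsilon\cdot(\psi\,\mathbf g^\varepsilon)\,dx=-\int_\Omega u_\varepsilon\,\nabla\psi\cdot\mathbf g^\varepsilon\,dx-\int_\Omega u_\varepsilon\,\psi\,\varepsilon^{-1}(\operatorname{div}_y\mathbf g)^\varepsilon\,dx$ with $\psi\in\mathcal C_0^\infty(\Omega)$; the last term vanishes identically since $\operatorname{div}_y\mathbf g=0$. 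Passing to the limit along $E'$ on both sides (weak $\Sigma$ for $\nabla u_\varepsilon$ on the left with the admissible test function $\psi(x)\mathbf g(x/\varepsilon)$, and weak $\Sigma$ — in fact strong $L^p$ for $u_\varepsilon$ via Rellich — on the right) gives $\int_\Omega\psi(x)\,M\big(\mathbf v(x,\cdot)\cdot\mathbf g\big)dx=-\int_\Omega\nabla\psi(x)\cdot M(\mathbf g)\,u_0(x)\,dx=\int_\Omega\psi(x)\,M(\mathbf g)\cdot\nabla u_0(x)\,dx$, where the last equality integrates by parts and uses that $M(\mathbf g)$ is constant. Since this holds for all $\psi$, we get $M\big(\mathbf w(x,\cdot)\cdot\mathbf g\big)=M(\mathbf v(x,\cdot)\cdot\mathbf g)-\nabla u_0(x)\cdot M(\mathbf g)=0$ for a.e.\ $x$ and all divergence-free $\mathbf g\in(\mathcal A^\infty)^d$, as required. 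Finally, $\partial u_\varepsilon/\partial x_i\rightarrow v_i(x,\cdot)=\partial u_0/\partial x_i+\partial u_1/\partial y_i$ in $L^p(\Omega)$-weak $\Sigma$, completing the proof.

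The main obstacle I anticipate is not any single limit passage — those are routine once the admissible test functions $\psi(x)g(x/\varepsilon)$ with $g\in\mathcal A^\infty$ are used and Rellich compactness handles the product terms — but rather the measurable-selection bookkeeping needed to assemble the pointwise-in-$x$ correctors $u_1(x,\cdot)$ furnished by Corollary \ref{c1} into a single element of $L^p(\Omega;B_{\#\mathcal A}^{1,p}(\mathbb R^d))$. One handles this by noting that the functional $\mathbf v\mapsto u_1$ of Corollary \ref{c1} is linear and bounded (with the explicit bound above), so it maps the $L^p(\Omega)$-valued datum $x\mapsto\mathbf w(x,\cdot)$ to an $L^p(\Omega)$-valued corrector; approximating $\mathbf w$ by simple functions and using the continuity of the Helmholtz map gives measurability and the norm estimate $\|u_1\|_{L^p(\Omega;B_{\#\mathcal A}^{1,p})}\le C\big(\|\mathbf v\|_{L^p(\Omega;\mathcal B_{\mathcal A}^p)^d}+\|\nabla u_0\|_{L^p(\Omega)}\big)$, which is finite by (SC)$_1$ and the weak convergence in $W^{1,p}$.
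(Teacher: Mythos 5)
Your overall architecture matches the paper's: extract weak $\Sigma$-limits via (SC)$_{1}$, show the limit of $u_{\varepsilon}$ is invariant in $y$ by testing against $\varepsilon\nabla(\psi g^{\varepsilon})$, derive orthogonality of $\mathbf{v}-\nabla u_{0}$ to divergence-free fields by testing with $\varphi(x)\Psi(x/\varepsilon)$, and conclude with Corollary \ref{c1}. There is, however, one genuine gap. You identify the weak $\Sigma$-limit $v_{0}$ of $u_{\varepsilon}$ with the scalar function $\bar{u}(x)=M(v_{0}(x,\cdot))$, arguing that $v_{0}(x,\cdot)\in I_{\mathcal{A}}^{p}$ \emph{and} \textquotedblleft by the mean-value identification it equals $u_{0}(x)$\textquotedblright. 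That inference is valid only when $\mathcal{A}$ is ergodic, i.e.\ when every invariant function is constant. The theorem is stated, and is needed in the paper, for a general algebra with mean value: Remark \ref{r0} exhibits a non-ergodic algebra, and Remark \ref{r2.2'} states that only under ergodicity does $u_{0}$ become independent of $y$. For non-ergodic $\mathcal{A}$ an invariant function is not determined by its mean, so $v_{0}(x,\cdot)$ need not equal $\bar{u}(x)$; with your choice $u_{0}:=\bar{u}$ the first convergence $u_{\varepsilon}\rightarrow u_{0}$ in $L^{p}(\Omega)$-weak $\Sigma$ is not justified, and the factorization $M(v_{0}(x,\cdot)\mathbf{g})=\bar{u}(x)M(\mathbf{g})$ used on the right-hand side of your orthogonality identity fails for the same reason. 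The correct object is $u_{0}:=v_{0}$ itself, an $I_{\mathcal{A}}^{p}$-valued function; one then needs the additional step (equations (\ref{2.3'})--(\ref{2.4'}) in the paper) showing that its distributional $x$-derivatives exist in $L^{p}$ with values in $I_{\mathcal{A}}^{p}$, so that $u_{0}\in W^{1,p}(\Omega;I_{\mathcal{A}}^{p})$ and the expression $\nabla u_{0}+\nabla_{y}u_{1}$ is meaningful, keeping $M(u_{0}(x,\cdot)\psi_{j})$ intact rather than factored.

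A secondary point: you invoke Rellich to pass to the limit in $-\int_{\Omega}u_{\varepsilon}\,\nabla\psi\cdot\mathbf{g}^{\varepsilon}\,dx$. This is unnecessary and, as stated, unavailable: the theorem assumes only that $\Omega$ is open, with no boundedness or boundary regularity. The weak $\Sigma$-convergence of $u_{\varepsilon}$ tested against the admissible function $\frac{\partial\psi}{\partial x_{j}}\otimes g_{j}$ gives the limit directly, which is how the paper proceeds. Your handling of the measurable-selection issue for $x\mapsto u_{1}(x,\cdot)$ is sound and indeed more explicit than the paper's.
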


\begin{proof}
Since the sequences $(u_{\varepsilon})_{\varepsilon\in E}$ and $(\nabla
u_{\varepsilon})_{\varepsilon\in E}$ are bounded respectively in $L^{p}%
(\Omega)$ and in $L^{p}(\Omega)^{d}$, there exist (by (SC)$_{1}$) a
subsequence $E^{\prime}$ of $E$ and $u_{0}\in L^{p}(\Omega;\mathcal{B}%
_{\mathcal{A}}^{p}(\mathbb{R}^{d}))$, $v=(v_{j})_{j}\in L^{p}(\Omega
;\mathcal{B}_{\mathcal{A}}^{p}(\mathbb{R}^{d}))^{d}$ such that $u_{\varepsilon
}\rightarrow u_{0}\ $in $L^{p}(\Omega)$-weak $\Sigma$ and $\frac{\partial
u_{\varepsilon}}{\partial x_{j}}\rightarrow v_{j}$ in $L^{p}(\Omega)$-weak
$\Sigma$. For $\Phi\in(\mathcal{C}_{0}^{\infty}(\Omega)\otimes\mathcal{A}%
^{\infty})^{d}$ and recalling that $\Phi^{\varepsilon}(x)=\Phi(x,x/\varepsilon
)$, we have
\[
\int_{\Omega}\varepsilon\nabla u_{\varepsilon}\cdot\Phi^{\varepsilon}%
dx=-\int_{\Omega}\left(  u_{\varepsilon}(\Div_{y}\Phi)^{\varepsilon
}+\varepsilon u_{\varepsilon}(\Div\Phi)^{\varepsilon}\right)  dx.
\]
Letting $E^{\prime}\ni\varepsilon\rightarrow0$ we get%
\[
-\int_{\Omega}M\left(  u_{0}(x,\cdot)\Div_{y}\Phi(x,\cdot)\right)  dx=0.
\]
This shows that $\overline{\nabla}_{y}u_{0}(x,\cdot)=0$, which means that
$u_{0}(x,\cdot)\in I_{\mathcal{A}}^{p}$, so that $u_{0}\in L^{p}%
(\Omega;I_{\mathcal{A}}^{p})$. Next let $\Phi_{\varepsilon}(x)=\varphi
(x)\Psi(x/\varepsilon)$ ($x\in\Omega$) with $\varphi\in\mathcal{C}_{0}%
^{\infty}(\Omega)$ and $\Psi=(\psi_{j})_{1\leq j\leq d}\in(\mathcal{A}%
^{\infty})^{d}$ with ${\Div}_{y}\Psi=0$. Clearly%
\[
\sum_{j=1}^{d}\int_{\Omega}\frac{\partial u_{\varepsilon}}{\partial x_{j}%
}\varphi\psi_{j}^{\varepsilon}dx=-\sum_{j=1}^{d}\int_{\Omega}u_{\varepsilon
}\psi_{j}^{\varepsilon}\frac{\partial\varphi}{\partial x_{j}}dx
\]
where $\psi_{j}^{\varepsilon}(x)=\psi_{j}(x/\varepsilon)$. Passing to the
limit in the above equation when $E^{\prime}\ni\varepsilon\rightarrow0$ we
get
\begin{equation}
\sum_{j=1}^{d}\int_{\Omega}M\left(  v_{j}\psi_{j}\right)  \varphi
dx=-\sum_{j=1}^{d}\int_{\Omega}M\left(  u_{0}\psi_{j}\right)  \frac
{\partial\varphi}{\partial x_{j}}dx. \label{2.3'}%
\end{equation}
Choosing in (\ref{2.3'}) $\psi_{j}=\delta_{ij}$ (the Kronecker delta), we
obtain, for any $1\leq i\leq d$%
\begin{equation}
\int_{\Omega}M(v_{i}\psi)\varphi dx=-\int_{\Omega}M(u_{0}\psi)\frac
{\partial\varphi}{\partial x_{i}}dx\text{ \ for all }\varphi\in\mathcal{C}%
_{0}^{\infty}(\Omega), \label{2.4'}%
\end{equation}
and reminding that $M(v_{i})\in L^{p}(\Omega)$ we have by (\ref{2.4'}) that
$\frac{\partial u_{0}}{\partial x_{i}}\in L^{p}(\Omega;I_{\mathcal{A}}^{p})$,
where $\frac{\partial u_{0}}{\partial x_{i}}$ is the distributional derivative
of $u_{0}$ with respect to $x_{i}$. We deduce that $u_{0}\in W^{1,p}%
(\Omega;I_{\mathcal{A}}^{p})$. Coming back to (\ref{2.3'}) we get
\[
\int_{\Omega}M((\mathbf{v}-\nabla u_{0})\cdot\Psi)\varphi dx=0\text{ \ for all
}\varphi\in\mathcal{C}_{0}^{\infty}(\Omega)\text{.}%
\]
It follows that
\[
M\left(  \left(  \mathbf{v}(x,\cdot)-\nabla u_{0}(x,\cdot)\right)  \cdot
\Psi\right)  =0
\]
for all $\Psi$ as above and for a.e. $x$. We infer from Corollary \ref{c1} the
existence of a function $u_{1}(x,\cdot)\in B_{\#\mathcal{A}}^{1,p}%
(\mathbb{R}^{d})$ such that
\[
\mathbf{v}(x,\cdot)-\nabla u_{0}(x,\cdot)=\nabla_{y}u_{1}(x,\cdot)
\]
for a.e. $x$. Whence the existence of a function $u_{1}:x\mapsto u_{1}%
(x,\cdot)$ with values in $B_{\#\mathcal{A}}^{1,p}(\mathbb{R}^{d})$ such that
$\mathbf{v}=\nabla u_{0}+\nabla_{y}u_{1}$.
\end{proof}

\begin{remark}
\label{r2.1'}\emph{It is very important to notice that the function }%
$u_{1}(x,\cdot)$\emph{ in the above theorem is such that its gradient }%
$\nabla_{y}u_{1}(x,\cdot)$\emph{ stands for the representative of its class
}$\nabla_{y}u_{1}(x,\cdot)+\mathcal{N}$\emph{ in }$\mathcal{B}_{\mathcal{A}%
}^{p}(\mathbb{R}^{d})^{d}$\emph{. So its uniqueness is in terms of its
equivalence class. However we will see in practice that we only need a
representative of the class, not all the class. This will come to light in the
resolution of the corrector problem (see Section \ref{sec3'} below) where we
will see that the uniqueness of }$u_{1}(x,\cdot)$\emph{ in }$B_{\#\mathcal{A}%
}^{1,p}(\mathbb{R}^{d})$\emph{ will be sufficient.}
\end{remark}

\begin{remark}
\label{r2.2'}\emph{If we assume the algebra }$\mathcal{A}$\emph{\ to be
ergodic, then }$I_{\mathcal{A}}^{p}$\emph{\ consists of constant functions, so
that the function }$u_{0}$\emph{\ in Theorem \ref{t2.3} does not depend on
}$y$\emph{, that is, }$u_{0}\in W^{1,p}(\Omega)$\emph{. We thus recover the
already known result proved in \cite{NA} in the case of ergodic algebras. In
this case, part (i) in (SC)}$_{2}$\emph{ is replaced by }

\begin{itemize}
\item[(i)$_{1}$] $u_{\varepsilon}\rightarrow u_{0}$\emph{ in }$W^{1,p}%
(\Omega)$\emph{-weak.}
\end{itemize}
\end{remark}

\section{Existence result for the corrector problem\label{sec3'}}

Let the matrix $A$ satisfy assumptions (H1) and (H2) of Section \ref{sec1}.
Our aim in this section is to prove the following corrector result.

\begin{theorem}
\label{t4.1}Let $H\in B_{\mathcal{A}}^{2}(\mathbb{R}^{d})^{d}$. Then there
exists a unique function $u\in B_{\#\mathcal{A}}^{1,2}(\mathbb{R}^{d})$ such
that
\begin{equation}
-\operatorname{div}A\nabla u=\operatorname{div}H\text{ in }\mathbb{R}^{d}.
\label{4.1}%
\end{equation}

\end{theorem}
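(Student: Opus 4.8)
The plan is to solve \eqref{4.1} by the Lax–Milgram lemma applied on the Banach space $B_{\#\mathcal{A}}^{1,2}(\mathbb{R}^{d})$, using the mean value $M$ as the ``integration'' device. Concretely, I would define the bilinear form
\[
a(u,v)=M\bigl(A\nabla u\cdot\nabla v\bigr),\qquad u,v\in B_{\#\mathcal{A}}^{1,2}(\mathbb{R}^{d}),
\]
and the linear functional
\[
\ell(v)=-M\bigl(H\cdot\nabla v\bigr),\qquad v\in B_{\#\mathcal{A}}^{1,2}(\mathbb{R}^{d}).
\]
Recall that $B_{\#\mathcal{A}}^{1,2}(\mathbb{R}^{d})$ is a Banach space for the gradient seminorm $\|u\|_{\#,2}=\|\nabla u\|_{2}=(M(|\nabla u|^2))^{1/2}$, so it is natural to look for the solution in this space. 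Note that this form is well defined on the quotient: if $\nabla u=0$ in $B_{\mathcal{A}}^{2}$ then $a(u,\cdot)=0$ and there is nothing to check, so without loss of generality we work with the gradient norm.

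The verification is then routine. Boundedness of $a$ follows from (H1) and the Cauchy–Schwarz inequality for $M$ (property (P)$_1$): $|a(u,v)|\le \beta\,M(|\nabla u||\nabla v|)\le \beta\|\nabla u\|_2\|\nabla v\|_2$. Coercivity is exactly the lower ellipticity bound in \eqref{1.2}: $a(u,u)=M(A\nabla u\cdot\nabla u)\ge \alpha\,M(|\nabla u|^2)=\alpha\|u\|_{\#,2}^2$. Boundedness of $\ell$ is again Cauchy–Schwarz: $|\ell(v)|\le \|H\|_2\|\nabla v\|_2$, using $H\in B_{\mathcal{A}}^{2}(\mathbb{R}^{d})^d$. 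The Lax–Milgram lemma then yields a unique $u\in B_{\#\mathcal{A}}^{1,2}(\mathbb{R}^{d})$ with $a(u,v)=\ell(v)$ for all $v$ in that space, i.e.
\[
M\bigl(A\nabla u\cdot\nabla v\bigr)=-M\bigl(H\cdot\nabla v\bigr)\qquad\text{for all }v\in B_{\#\mathcal{A}}^{1,2}(\mathbb{R}^{d}).
\]

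The remaining point — and the only genuinely delicate one — is to pass from this weak formulation in the ``mean value duality'' to the statement that $u$ solves $-\operatorname{div}A\nabla u=\operatorname{div}H$ in the ordinary sense of distributions on $\mathbb{R}^{d}$. Here I would test against $v=\psi\ast\varphi$ for $\psi\in\mathcal{A}^\infty$ and $\varphi\in\mathcal{D}(\mathbb{R}^d)$ (which lies in $\mathcal{A}^\infty\subset B_{\mathcal{A}}^{1,2}$), and unfold the mean value using the translation invariance of $M$ exactly as in the computation at the end of the proof of Theorem~\ref{t1} (the identity $M(f[\psi\ast\operatorname{div}\varphi])=\int_{\mathbb{R}^d}\bigl(M(f\,\tau_{-y}\psi)\bigr)\operatorname{div}\varphi\,dy$). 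This shows that, for each fixed $\psi$, the function $x\mapsto M\bigl((A\nabla u)(\cdot)\,\tau_{-x}\psi\bigr)+M\bigl(H(\cdot)\,\tau_{-x}\psi\bigr)$ has vanishing distributional divergence; since $A\nabla u+H\in B_{\mathcal{A}}^2(\mathbb{R}^d)^d\subset L^2_{loc}$, a density/localization argument in $\psi$ gives $\operatorname{div}(A\nabla u+H)=0$ in $\mathcal{D}'(\mathbb{R}^d)$, which is \eqref{4.1}. I expect this last ``transfer'' step to be the main obstacle, and it is precisely the one for which the machinery of Section~\ref{sec2} (convolution stability of $B_{\mathcal{A}}^p$, the estimate \eqref{2}, and the duality (P)$_2$) was set up; everything else is a direct application of Lax–Milgram. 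Uniqueness in $B_{\#\mathcal{A}}^{1,2}(\mathbb{R}^{d})$ is immediate from coercivity: two solutions have the same gradient, hence are identified in that space.
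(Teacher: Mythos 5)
Your Lax--Milgram setup on $B_{\#\mathcal{A}}^{1,2}(\mathbb{R}^{d})$ is fine (for $p=2$ the gradient norm is Hilbertian, the form is well defined on the quotient, bounded and coercive), and it does produce the unique solution of the \emph{variational} problem $M\bigl((A\nabla u+H)\cdot\nabla v\bigr)=0$ for all $v\in B_{\#\mathcal{A}}^{1,2}(\mathbb{R}^{d})$. The gap is exactly at the step you flag as delicate: the passage from this mean-value formulation to the distributional equation \eqref{4.1} is not merely delicate, it is impossible by any density/localization argument in the test functions. The functional $v\mapsto M\bigl((A\nabla u+H)\cdot\nabla v\bigr)$ only sees the class of $G:=A\nabla u+H$ modulo $\mathcal{N}=\{g:\left\Vert g\right\Vert _{2}=0\}$; for instance, adding to $G$ a nonzero compactly supported smooth field leaves every quantity $y\mapsto M(G\,\tau_{-y}\psi)$ unchanged (one is taking $M$ of a product with a function of zero seminorm), while it changes $\operatorname{div}G$ in $\mathcal{D}^{\prime}(\mathbb{R}^{d})$. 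So knowing that $y\mapsto M(G\,\tau_{-y}\psi)$ is divergence free for every $\psi\in\mathcal{A}^{\infty}$ cannot yield $\operatorname{div}G=0$ in $\mathcal{D}^{\prime}(\mathbb{R}^{d})$: the unfolding identity of Theorem \ref{t1} transfers information only in the opposite direction. This is not a technicality; as stressed after Theorem \ref{t1.1}, the whole point of Theorem \ref{t4.1} is to produce a genuine distributional solution.

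The paper therefore argues in the reverse order. It first solves the regularized problems $-\operatorname{div}A\nabla u_{T}+T^{-2}u_{T}=\operatorname{div}H$ in $H_{loc}^{1}(\mathbb{R}^{d})$ with estimates uniform in $T$ (Proposition \ref{p4.0}, based on \cite{Po-Yu89}, and Lemma \ref{l4.1} which places $u_{T}$ in $B_{\mathcal{A}}^{1,2}(\mathbb{R}^{d})$); a weak $L_{loc}^{2}$ limit of $\nabla u_{T}$ is shown to be a gradient $\nabla u$, and $u$ solves \eqref{4.1} in $\mathcal{D}^{\prime}(\mathbb{R}^{d})$. Only then does one check that this particular distributional solution also satisfies the mean-value identity \eqref{03}; by the coercivity you invoke it must coincide in $B_{\#\mathcal{A}}^{1,2}(\mathbb{R}^{d})$ with the Lax--Milgram solution, which is precisely how one learns that $\nabla u\in B_{\mathcal{A}}^{2}(\mathbb{R}^{d})^{d}$. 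Your uniqueness claim has the same defect: coercivity gives uniqueness for the variational problem only, whereas the theorem asserts uniqueness among distributional solutions with gradient in $B_{\mathcal{A}}^{2}(\mathbb{R}^{d})^{d}$; the paper obtains this via Caccioppoli's inequality, the hole-filling trick and a Liouville-type argument showing that such a solution of the homogeneous equation is constant.
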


The proof of Theorem \ref{t4.1} is a consequence of the following auxiliary results.

\begin{proposition}
\label{p4.0}Let $h\in L_{loc}^{2}(\mathbb{R}^{d})$ and $H=(h_{1},...,h_{d})\in
L_{loc}^{2}(\mathbb{R}^{d})^{d}$. Assume that
\begin{equation}
\sup_{x\in\mathbb{R}^{d}}\int_{B(x,1)}\left(  \left\vert h\right\vert
^{2}+\left\vert H\right\vert ^{2}\right)  dy<\infty. \label{6.5}%
\end{equation}
Then for any $T>0$, there exists a unique $u\in H_{loc}^{1}(\mathbb{R}^{d})$
such that
\begin{equation}
-\operatorname{div}A\nabla u+T^{-2}u=h+\operatorname{div}H\text{ in
}\mathbb{R}^{d} \label{6.6}%
\end{equation}
and
\begin{equation}
\sup_{x\in\mathbb{R}^{d}}\int_{B(x,1)}\left(  \left\vert u\right\vert
^{2}+\left\vert \nabla u\right\vert ^{2}\right)  dy<\infty. \label{6.7}%
\end{equation}
Furthermore the solution $u$ satisfies the estimate
\begin{equation}
\sup_{x\in\mathbb{R}^{d},R\geq T}%
\mathchoice {{\setbox0=\hbox{$\displaystyle{\textstyle
-}{\int}$ } \vcenter{\hbox{$\textstyle -$
}}\kern-.6\wd0}}{{\setbox0=\hbox{$\textstyle{\scriptstyle -}{\int}$ } \vcenter{\hbox{$\scriptstyle -$
}}\kern-.6\wd0}}{{\setbox0=\hbox{$\scriptstyle{\scriptscriptstyle -}{\int}$
} \vcenter{\hbox{$\scriptscriptstyle -$
}}\kern-.6\wd0}}{{\setbox0=\hbox{$\scriptscriptstyle{\scriptscriptstyle
-}{\int}$ } \vcenter{\hbox{$\scriptscriptstyle -$ }}\kern-.6\wd0}}\!\int
_{B(x,R)}\left(  T^{-2}\left\vert u\right\vert ^{2}+\left\vert \nabla
u\right\vert ^{2}\right)  dy\leq C\sup_{x\in\mathbb{R}^{d},R\geq
T}\mathchoice {{\setbox0=\hbox{$\displaystyle{\textstyle
-}{\int}$ } \vcenter{\hbox{$\textstyle -$
}}\kern-.6\wd0}}{{\setbox0=\hbox{$\textstyle{\scriptstyle -}{\int}$ } \vcenter{\hbox{$\scriptstyle -$
}}\kern-.6\wd0}}{{\setbox0=\hbox{$\scriptstyle{\scriptscriptstyle -}{\int}$
} \vcenter{\hbox{$\scriptscriptstyle -$
}}\kern-.6\wd0}}{{\setbox0=\hbox{$\scriptscriptstyle{\scriptscriptstyle
-}{\int}$ } \vcenter{\hbox{$\scriptscriptstyle -$ }}\kern-.6\wd0}}\!\int
_{B(x,R)}\left(  T^{2}\left\vert h\right\vert ^{2}+\left\vert H\right\vert
^{2}\right)  dy \label{6.8}%
\end{equation}
where $C>0$ depends only on $d$ and $\alpha$.
\end{proposition}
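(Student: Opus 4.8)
The plan is to reduce to $T=1$ by scaling, to realize $u$ as a weak limit of solutions of Dirichlet problems on an exhausting sequence of balls, and to read off all the quantitative content from a single exponentially weighted energy estimate that is uniform in the approximation; the same estimate will yield uniqueness inside the class \eqref{6.7}. For the reduction, set $v(y)=u(Ty)$, $\tilde h(y)=T^{2}h(Ty)$, $\tilde H(y)=TH(Ty)$, $\tilde A(y)=A(Ty)$; then \eqref{6.6} becomes $-\operatorname{div}(\tilde A\nabla v)+v=\tilde h+\operatorname{div}\tilde H$, $\tilde A$ still satisfies \eqref{1.2} with the same $\alpha,\beta$, and \eqref{6.5}, \eqref{6.7}, \eqref{6.8} for $(u,h,H,T)$ become the same assertions for $(v,\tilde h,\tilde H,1)$, so I assume $T=1$. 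For each $n\in\mathbb{N}$, since $h,H\in L^{2}(B(0,n))$, the Lax--Milgram theorem (or the Riesz theorem, $A$ being symmetric) applied to the bounded coercive form $(\phi,\psi)\mapsto\int_{B(0,n)}(A\nabla\phi\cdot\nabla\psi+\phi\psi)$ on $H^{1}_{0}(B(0,n))$ yields a unique $u_{n}\in H^{1}_{0}(B(0,n))$ solving $-\operatorname{div}(A\nabla u_{n})+u_{n}=h+\operatorname{div}H$ in $B(0,n)$; extend $u_{n}$ by $0$.

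The core is the following weighted estimate. Fix $x_{0}\in\mathbb{R}^{d}$ and let $w=e^{-c\,\delta(\cdot\,,x_{0})}$, where $\delta(\cdot\,,x_{0})$ is an Agmon-type distance — $\delta(x_{0},x_{0})=0$, $A\nabla\delta\cdot\nabla\delta\le1$ a.e., and $\delta(x,x_{0})\le\alpha^{-1/2}|x-x_{0}|$; one may take $\delta(x,x_{0})=\beta^{-1/2}|x-x_{0}|$ — and $c\in(0,1)$ is a small absolute constant. Since $w$ is bounded and Lipschitz and $u_{n}\in H^{1}_{0}(B(0,n))$, $w^{2}u_{n}$ is an admissible test function. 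Testing the equation for $u_{n}$ against $w^{2}u_{n}$, expanding $\nabla(w^{2}\psi)=w^{2}\nabla\psi-2cw^{2}\psi\,\nabla\delta$, bounding below by ellipticity, $A\nabla u_{n}\cdot\nabla u_{n}\ge\alpha|\nabla u_{n}|^{2}$, and estimating the first-order cross term by the Cauchy--Schwarz inequality in the $A$-form, $|A\nabla u_{n}\cdot\nabla\delta|\le(A\nabla u_{n}\cdot\nabla u_{n})^{1/2}$, one absorbs the cross term and the right-hand side by Young's inequality (here $c$ must be small) and obtains
\[
\int_{\mathbb{R}^{d}}w^{2}\bigl(|\nabla u_{n}|^{2}+|u_{n}|^{2}\bigr)\,dx\le C(\alpha)\int_{\mathbb{R}^{d}}w^{2}\bigl(|h|^{2}+|H|^{2}\bigr)\,dx .
\]
Now $w\ge e^{-c\alpha^{-1/2}}$ on $B(x_{0},1)$, so the left side dominates $\int_{B(x_{0},1)}(|\nabla u_{n}|^{2}+|u_{n}|^{2})$ up to a factor depending on $\alpha$; and since $w$ decays exponentially away from $x_{0}$, splitting $\mathbb{R}^{d}$ into the unit-width annuli $\{k\le|x-x_{0}|<k+1\}$, each covered by at most $C(d)k^{d-1}$ unit balls, the right side is at most $C(d,\alpha)\sup_{x}\int_{B(x,1)}(|h|^{2}+|H|^{2})\,dy$. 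Therefore
\[
\int_{B(x_{0},1)}\bigl(|\nabla u_{n}|^{2}+|u_{n}|^{2}\bigr)\,dy\le C(d,\alpha)\sup_{x\in\mathbb{R}^{d}}\int_{B(x,1)}\bigl(|h|^{2}+|H|^{2}\bigr)\,dy
\]
uniformly in $n$ and $x_{0}$; summing over a bounded-overlap cover of $B(x_{0},R)$ by unit balls and dividing by $|B(x_{0},R)|$ gives \eqref{6.8} (with $T=1$) for each $u_{n}$, the constant depending only on $d$ and $\alpha$.

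The uniform local bounds let one extract a subsequence with $u_{n}\rightharpoonup u$ weakly in $H^{1}$ on every ball; passing to the limit in the weak formulation (tested against functions in $\mathcal{C}_{0}^{\infty}(\mathbb{R}^{d})$) shows $u\in H^{1}_{loc}(\mathbb{R}^{d})$ solves \eqref{6.6}, and \eqref{6.7}, \eqref{6.8} are inherited by weak lower semicontinuity of the $H^{1}$-norm on balls. For uniqueness within the class \eqref{6.7}: if $u$ satisfies \eqref{6.7} and $-\operatorname{div}(A\nabla u)+u=0$, then \eqref{6.7} and the exponential decay of $w$ give $w^{2}u\in H^{1}(\mathbb{R}^{d})$, so after truncating by a cutoff $\eta_{R}$ of $B(0,R)$ and letting $R\to\infty$ one may test against $w^{2}u$; the same absorption with zero right-hand side forces $\int_{\mathbb{R}^{d}}w^{2}(|\nabla u|^{2}+|u|^{2})\le0$, so $u\equiv0$. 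Undoing the scaling finishes the proof.

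The difficulty is concentrated in the weighted estimate, where two points require care: (i) the first-order term coming from differentiating $w^{2}$ must be absorbed into the ellipticity term without the constant deteriorating — this is precisely why the weight is built from an $A$-adapted (Agmon) distance, so that the cross term is controlled purely through the quadratic form $\xi\mapsto A\xi\cdot\xi$; and (ii) the exponentially weighted data integral must be dominated by $\sup_{x}\int_{B(x,1)}(\cdot)$, which forces the decay rate $c$ to be small but fixed and uses the elementary fact that the $k$-th unit-width annulus is covered by $O(k^{d-1})$ unit balls, so that the ensuing series in $k$ converges. Everything else — Lax--Milgram, weak compactness, passing to the limit — is routine elliptic bookkeeping.
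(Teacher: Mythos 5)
Your argument is correct, but it is a genuinely different (and far more self-contained) route than the paper's: the paper does not prove this proposition at all, it simply invokes \cite[Section 4, Lemma 4.1]{Po-Yu89} for the solvability and for (\ref{6.8}) with $R=T$, and then obtains (\ref{6.8}) for $R>T$ by a Caccioppoli inequality. You instead reconstruct the content of that reference from scratch: rescaling to $T=1$, Lax--Milgram on an exhaustion by balls, a single exponentially weighted energy estimate with an $A$-adapted weight that is uniform in the approximation parameter, weak compactness, and the same weighted estimate run with zero data for uniqueness. All the steps check out: the scaling correctly transforms (\ref{6.6})--(\ref{6.8}) into their $T=1$ versions (the factors of $T^{2}$ cancel on both sides of (\ref{6.8})); the cross term $2c\int w^{2}u_{n}\,A\nabla u_{n}\cdot\nabla\delta$ is absorbed for $c$ small precisely because $A\nabla\delta\cdot\nabla\delta\le1$; the annulus decomposition converts the weighted data integral into $\sup_{x}\int_{B(x,1)}(|h|^{2}+|H|^{2})$; and your unit-ball bound summed over a bounded-overlap cover of $B(x_{0},R)$ gives (\ref{6.8}) for every $R\ge T$ directly, so you do not even need the separate Caccioppoli step the paper uses. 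The cutoff justification for testing the homogeneous equation against $w^{2}u$ in the uniqueness step is also sound, since (\ref{6.7}) gives at most polynomial growth of $\int_{B(0,R)}(|u|^{2}+|\nabla u|^{2})$ against the exponential decay of $w$. What your approach buys is a complete proof where the paper offers only a citation; what the paper's route buys is brevity.

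One small caveat on constants: with $\delta(x,x_{0})=\beta^{-1/2}|x-x_{0}|$ the weight decays at rate $c\beta^{-1/2}$, so the convergent series $\sum_{k}k^{d-1}e^{-2c\beta^{-1/2}k}$ contributes a factor depending on $\beta$ as well as on $d$ and $\alpha$. This dependence is genuine (take $A=\beta I_{d}$ with $\beta$ large: the Green's function of $-\beta\Delta+1$ spreads over scale $\beta^{1/2}$), so the proposition's assertion that $C$ depends only on $d$ and $\alpha$ is itself an imprecision of the paper, inherited from a normalization in the cited references that packages both ellipticity bounds into a single parameter; your proof correctly exposes where the upper bound $\beta$ enters, and you should state the constant as $C(d,\alpha,\beta)$.
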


\begin{proof}
The solvability of (\ref{6.6}) (in the class specified in Proposition
\ref{p4.0}) together with (\ref{6.8}) (for $R=T$) are ensured by \cite[Section
4]{Po-Yu89}; see especially Lemma 4.1 therein. By applying Caccioppoli's
inequality, we obtain (\ref{6.8}) for $R>T$.
\end{proof}

\begin{lemma}
\label{l4.1}Let $h\in B_{\mathcal{A}}^{2}(\mathbb{R}^{d})$ and $H\in
B_{\mathcal{A}}^{2}(\mathbb{R}^{d})^{d}$. Then for any $T>0$, there exists a
unique $u\in B_{\mathcal{A}}^{1,2}(\mathbb{R}^{d})$ such that
\begin{equation}
-\operatorname{div}A\nabla u+T^{-2}u=h+\operatorname{div}H\text{ in
}\mathbb{R}^{d}. \label{6.9}%
\end{equation}
Moreover $u$ solves the equation
\begin{equation}
M(A\nabla u\cdot\nabla\phi+T^{-2}u\phi)=M(h\phi-H\cdot\nabla\phi)\text{ for
all }\phi\in B_{\mathcal{A}}^{1,2}(\mathbb{R}^{d}) \label{6.10}%
\end{equation}
and satisfies the estimate \emph{(\ref{6.8})}.
\end{lemma}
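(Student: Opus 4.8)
The strategy is to build the solution in two complementary ways and then reconcile them: a mean-value weak solution of (\ref{6.10}) obtained by a Lax-Milgram argument in a Besicovitch-Sobolev Hilbert space, which at once provides uniqueness in $B_{\mathcal{A}}^{1,2}(\mathbb{R}^{d})$ and an energy bound, and a genuine $H^{1}_{loc}$ solution of (\ref{6.9}) in $\mathcal{D}'(\mathbb{R}^{d})$ produced by Proposition \ref{p4.0}, which carries the localized estimate (\ref{6.8}). The core of the argument is to show that the concrete solution actually belongs to the Besicovitch-Sobolev space, whence the two objects coincide.

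For the variational step I would work in the Hilbert space $\mathcal{B}_{\mathcal{A}}^{1,2}(\mathbb{R}^{d})$ with norm $\|\cdot\|_{1,2}$. Using that $A\in L^{\infty}\cap B_{\mathcal{A}}^{2}(\mathbb{R}^{d})$ acts as a multiplier on $B_{\mathcal{A}}^{2}(\mathbb{R}^{d})$, the bilinear form $a(u,v)=M(A\overline{\nabla}_{y}u\cdot\overline{\nabla}_{y}v+T^{-2}uv)$ is bounded, and (H1) makes it coercive since $a(v,v)\ge\alpha\|\overline{\nabla}_{y}v\|_{2}^{2}+T^{-2}\|v\|_{2}^{2}\ge\min(\alpha,T^{-2})\|v\|_{1,2}^{2}$, while $v\mapsto M(hv-H\cdot\overline{\nabla}_{y}v)$ is continuous by the Cauchy-Schwarz inequality in $B_{\mathcal{A}}^{2}(\mathbb{R}^{d})$; Lax-Milgram then yields a unique solution of (\ref{6.10}). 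Testing the homogeneous equation against the solution itself and using the mean-value integration-by-parts identity (which follows from the vanishing of the mean value of a $y$-derivative) gives $\|u\|_{1,2}=0$ for any difference of solutions, hence uniqueness in $B_{\mathcal{A}}^{1,2}(\mathbb{R}^{d})$; the same test on the inhomogeneous equation gives the energy bound $T^{-2}\|u\|_{2}^{2}+\|\nabla u\|_{2}^{2}\le C(T^{2}\|h\|_{2}^{2}+\|H\|_{2}^{2})$.

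To recover (\ref{6.9}) in $\mathcal{D}'(\mathbb{R}^{d})$ and the sharper bound (\ref{6.8}), I would first treat smooth data $h\in\mathcal{A}^{\infty}$, $H\in(\mathcal{A}^{\infty})^{d}$: these are bounded, so (\ref{6.5}) holds and Proposition \ref{p4.0} furnishes the unique $u\in H^{1}_{loc}(\mathbb{R}^{d})$ in the uniformly local class solving (\ref{6.9}) in $\mathcal{D}'(\mathbb{R}^{d})$, with (\ref{6.7})-(\ref{6.8}). Testing (\ref{6.9}) against $\phi\theta_{R}$ for $\phi\in\mathcal{A}^{\infty}$, where $\theta_{R}$ is a cut-off equal to $1$ on $B_{R}$, supported in $B_{2R}$ with $|\nabla\theta_{R}|\le C/R$, then dividing by $|B_{R}|$ and letting $R\to\infty$ (the terms carrying $\nabla\theta_{R}$ vanish by (\ref{6.7}) and Cauchy-Schwarz, the remaining averages converging to mean values), produces (\ref{6.10}) for every $\phi\in\mathcal{A}^{\infty}$, hence for all $\phi\in B_{\mathcal{A}}^{1,2}(\mathbb{R}^{d})$; granting that $u\in B_{\mathcal{A}}^{1,2}(\mathbb{R}^{d})$, the uniqueness above identifies it with the variational solution, so in particular it satisfies (\ref{6.8}). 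For general $h\in B_{\mathcal{A}}^{2}(\mathbb{R}^{d})$ and $H\in B_{\mathcal{A}}^{2}(\mathbb{R}^{d})^{d}$ one approximates in $\|\cdot\|_{2}$ by smooth data, invokes the energy bound on the differences to obtain a Cauchy sequence in $B_{\mathcal{A}}^{1,2}(\mathbb{R}^{d})$ whose limit solves (\ref{6.10}), and passes to the limit in (\ref{6.8}).

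The main obstacle is precisely the step "$u\in B_{\mathcal{A}}^{1,2}(\mathbb{R}^{d})$", since the uniformly local Sobolev class is not contained in the Besicovitch-Sobolev space; this is the only place where the inner structure of the coefficients must be fully exploited. When $A\in(\mathcal{A}^{\infty})^{d\times d}$ the translation invariance of $\mathcal{A}$ and the uniqueness in Proposition \ref{p4.0} show that $u(\cdot+z)$ is the Proposition \ref{p4.0} solution with $A,h,H$ translated by $z$, so applying (\ref{6.8}) to $u(\cdot+z)-u$ and using the uniform continuity of $A,h,H$ together with (\ref{6.7}) forces $z\mapsto u(\cdot+z)$ to be continuous into the uniformly local $H^{1}$ topology along the compact hull of the data; transporting (\ref{6.9}) to the Gelfand spectrum $\Delta(\mathcal{A})$ of $\mathcal{A}$, solving the resulting coercive problem there, and pulling its solution back along the dense orbit — using a Helmholtz-type representation in the spirit of Corollary \ref{c1} to return to a genuine locally integrable function — then places $u$ and $\nabla u$ in $B_{\mathcal{A}}^{2}(\mathbb{R}^{d})$. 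For a general elliptic $A\in L^{\infty}\cap B_{\mathcal{A}}^{2}(\mathbb{R}^{d})$ I would reduce to this case by mollifying $A$ (ellipticity being preserved by convexity of the constraint), and I expect this reduction to be the delicate point of the whole proof.
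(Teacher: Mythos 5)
Your overall architecture --- a Lax--Milgram solution of the mean-value formulation \eqref{6.10} in the Hilbert space $\mathcal{B}_{\mathcal{A}}^{1,2}(\mathbb{R}^{d})$, a distributional solution in the uniformly local class supplied by Proposition \ref{p4.0}, and an identification of the two --- is the right one, and it is essentially what the reference the paper points to for this lemma (\cite[Lemma 2.1]{JTW}) carries out; the coercivity computation, the energy bound, and the cut-off argument controlling the $\nabla\theta_{R}$ terms via \eqref{6.7} are all fine. The problem is that the one step you yourself flag as the crux, namely that the Proposition \ref{p4.0} solution actually lies in $B_{\mathcal{A}}^{1,2}(\mathbb{R}^{d})$, is not proved, and the sketch you offer for it does not close. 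Continuity of $z\mapsto u(\cdot+z)$ in the uniformly local $H^{1}$ topology is far weaker than membership in $B_{\mathcal{A}}^{2}(\mathbb{R}^{d})$: every bounded uniformly continuous function enjoys such translation continuity, yet need not belong to the closure of a prescribed algebra $\mathcal{A}$ in the seminorm $\left\Vert\cdot\right\Vert_{2}$, so no amount of translation bookkeeping can by itself place $u$ and $\nabla u$ in the Besicovitch space. The transport to the Gelfand spectrum and the "pull-back along the dense orbit" is precisely the assertion to be established, not an argument for it.

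Two further points make the sketch unrecoverable as written. First, the reduction from $A\in L^{\infty}\cap B_{\mathcal{A}}^{2}(\mathbb{R}^{d})^{d\times d}$ to smooth coefficients by mollification does not close: the energy identity for the difference of the solutions associated with $A_{n}=A\ast\rho_{n}$ and $A_{m}$ gives
\begin{equation*}
\left\Vert \nabla(u_{n}-u_{m})\right\Vert _{2}\leq C\left\Vert (A_{n}-A_{m})\nabla u_{m}\right\Vert _{2},
\end{equation*}
and the right-hand side cannot be controlled by $\left\Vert A_{n}-A_{m}\right\Vert _{2}$ alone; one would need either $\nabla u_{m}\in L^{\infty}$ or convergence of $A_{n}$ in $L^{\infty}(\mathbb{R}^{d})$, neither of which is available (and the whole point of the paper is to avoid regularity hypotheses that would give the former). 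Second, your derivation of \eqref{6.10} from \eqref{6.9} by testing against $\phi\theta_{R}$ and letting $R\to\infty$ already presupposes the conclusion: the averages $\left\vert B_{R}\right\vert ^{-1}\int_{B_{R}}A\nabla u\cdot\nabla\phi\,dy$ converge to a mean value only once one knows $\nabla u\in B_{\mathcal{A}}^{2}(\mathbb{R}^{d})^{d}$ (so that the integrand lies in $B_{\mathcal{A}}^{1}(\mathbb{R}^{d})$); from \eqref{6.7} alone one only gets boundedness of these averages, hence convergence along subsequences, which is not enough to identify the limit with $M(A\nabla u\cdot\nabla\phi)$ and hence with the Lax--Milgram solution. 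So the existence half of the lemma --- the realization of the abstract variational solution as a genuine $H_{loc}^{1}$ function satisfying \eqref{6.9} and \eqref{6.8} --- remains open in your write-up; this is the substantive content of the cited \cite[Lemma 2.1]{JTW}, and it is what your proposal would need to supply.
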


\begin{proof}
See the proof of \cite[Lemma 2.1]{JTW}.
\end{proof}

\begin{remark}
\label{r1.0}\emph{If in (\ref{6.10}) we take }$\phi=1$\emph{ and }$h=0$\emph{,
then we get }$M(v)=0$\emph{, so that the solution of (\ref{6.6}) corresponding
to }$h=0$\emph{, has mean value equal to }$0$\emph{.}
\end{remark}

\begin{proof}
[Proof of Theorem \ref{t4.1}]Theorem \ref{t4.1} has been proved in \cite{JTW}.
Here we repeat the proof for the reader's convenience. 1. \textit{Existence}.
Let us denote by $(u_{T})_{T\geq1}$ the sequence constructed in Lemma
\ref{l4.1} but this time by taking henceforth $h=0$. It satisfies (\ref{6.8}),
that is
\begin{equation}
\sup_{x\in\mathbb{R}^{d},R\geq T}%
\mathchoice {{\setbox0=\hbox{$\displaystyle{\textstyle
-}{\int}$ } \vcenter{\hbox{$\textstyle -$
}}\kern-.6\wd0}}{{\setbox0=\hbox{$\textstyle{\scriptstyle -}{\int}$ } \vcenter{\hbox{$\scriptstyle -$
}}\kern-.6\wd0}}{{\setbox0=\hbox{$\scriptstyle{\scriptscriptstyle -}{\int}$
} \vcenter{\hbox{$\scriptscriptstyle -$
}}\kern-.6\wd0}}{{\setbox0=\hbox{$\scriptscriptstyle{\scriptscriptstyle
-}{\int}$ } \vcenter{\hbox{$\scriptscriptstyle -$ }}\kern-.6\wd0}}\!\int
_{B(x,R)}\left(  T^{-2}\left\vert u_{T}\right\vert ^{2}+\left\vert \nabla
u_{T}\right\vert ^{2}\right)  dy\leq C\sup_{x\in\mathbb{R}^{d}}%
\mathchoice {{\setbox0=\hbox{$\displaystyle{\textstyle
-}{\int}$ } \vcenter{\hbox{$\textstyle -$
}}\kern-.6\wd0}}{{\setbox0=\hbox{$\textstyle{\scriptstyle -}{\int}$ } \vcenter{\hbox{$\scriptstyle -$
}}\kern-.6\wd0}}{{\setbox0=\hbox{$\scriptstyle{\scriptscriptstyle -}{\int}$
} \vcenter{\hbox{$\scriptscriptstyle -$
}}\kern-.6\wd0}}{{\setbox0=\hbox{$\scriptscriptstyle{\scriptscriptstyle
-}{\int}$ } \vcenter{\hbox{$\scriptscriptstyle -$ }}\kern-.6\wd0}}\!\int
_{B(x,T)}\left\vert H\right\vert ^{2}dy\leq C, \label{00}%
\end{equation}
so that the sequence $(\nabla u_{T})_{T\geq1}$ is bounded in $L_{loc}%
^{2}(\mathbb{R}^{d})^{d}$. Therefore, by the weak compactness, $(\nabla
u_{T})_{T\geq1}$ weakly converges in $L_{loc}^{2}(\mathbb{R}^{d})^{d}$ (up to
extraction of a subsequence) to some $V\in L_{loc}^{2}(\mathbb{R}^{d})^{d}$.
From the equality $\partial^{2}u_{T}/\partial y_{i}\partial y_{j}=\partial
^{2}u_{T}/\partial y_{j}\partial y_{i}$, a limit passage in the distributional
sense yields $\partial V_{i}/\partial y_{j}=\partial V_{j}/\partial y_{i}$,
where $V=(V_{j})_{1\leq j\leq d}$. This implies $V=\nabla u$ for some $u\in
H_{loc}^{1}(\mathbb{R}^{d})$. Using the boundedness of $(T^{-1}u_{T})_{T\geq
1}$ in $L_{loc}^{2}(\mathbb{R}^{d})$ (see (\ref{6.8}) once again), we pass to
the limit in the variational formulation of (\ref{6.9}) (as $T\rightarrow
\infty$) to get that $u$ solves (\ref{4.1}). Arguing exactly as in the proof
of (\ref{6.10}) (in Lemma \ref{l4.1}), we show that $u\in H_{loc}%
^{1}(\mathbb{R}^{d})$ also solves the equation
\begin{equation}
M(A\nabla u\cdot\nabla\phi)=M(H\cdot\nabla\phi)\text{ for all }\phi\in
B_{\#\mathcal{A}}^{1,2}(\mathbb{R}^{d}). \label{03}%
\end{equation}
Using the fact that the bilinear form $(u,\phi)\mapsto M(A\nabla u\cdot
\nabla\phi)$ is continuous and coercive on $B_{\#\mathcal{A}}^{1,2}%
(\mathbb{R}^{d})$, we obtain that $u\in B_{\#\mathcal{A}}^{1,2}(\mathbb{R}%
^{d})$ is the unique solution of (\ref{03}). This yields also $V\in
B_{\mathcal{A}}^{2}(\mathbb{R}^{d})^{d}$.

2. \textit{Uniqueness} (of $\nabla u$). Assume that $u\in H_{loc}%
^{1}(\mathbb{R}^{d})$ is such that $-\operatorname{div}A\nabla u=0$ in
$\mathbb{R}^{d}$ and $\nabla u\in B_{\mathcal{A}}^{2}(\mathbb{R}^{d})^{d}$.
Let $\xi\in\mathbb{R}$ be freely fixed. We may replace $u$ by $u-\xi$ and
obtain from Caccioppoli inequality that there is a universal constant
$C_{0}=C_{0}(d,\alpha,\beta)>0$ such that
\[
\int_{B_{r}(x)}\left\vert \nabla u\right\vert ^{2}dy\leq\frac{C_{0}}{r^{2}%
}\int_{B_{2r}(x)\backslash B_{r}(x)}\left\vert u-\xi\right\vert ^{2}dy
\]
where $B_{r}(x)=B(x,r)$. For $\xi
=\mathchoice {{\setbox0=\hbox{$\displaystyle{\textstyle
-}{\int}$ } \vcenter{\hbox{$\textstyle -$
}}\kern-.6\wd0}}{{\setbox0=\hbox{$\textstyle{\scriptstyle -}{\int}$ }
\vcenter{\hbox{$\scriptstyle -$
}}\kern-.6\wd0}}{{\setbox0=\hbox{$\scriptstyle{\scriptscriptstyle -}{\int}$
} \vcenter{\hbox{$\scriptscriptstyle -$
}}\kern-.6\wd0}}{{\setbox0=\hbox{$\scriptscriptstyle{\scriptscriptstyle
-}{\int}$ } \vcenter{\hbox{$\scriptscriptstyle -$ }}\kern-.6\wd0}}\!\int
_{B_{2r}(x)\backslash B_{r}(x)}u$, we derive from Poincar\'{e}'s inequality
that
\[
\int_{B_{r}(x)}\left\vert \nabla u\right\vert ^{2}\leq C_{0}\int
_{B_{2r}(x)\backslash B_{r}(x)}\left\vert \nabla u\right\vert ^{2}.
\]
Now, we make use of the hole-filling technique (see e.g. \cite[p.
82]{Giaquinta}) which consists in adding $C_{0}\int_{B_{r}(x)}\left\vert
\nabla u\right\vert ^{2}$ to both sides of the above inequality to obtain
\[
\int_{B_{r}(x)}\left\vert \nabla u\right\vert ^{2}\leq\frac{C_{0}}{C_{0}%
+1}\int_{B_{2r}(x)}\left\vert \nabla u\right\vert ^{2}.
\]
Now, proceeding as in the proof of (\ref{6.10}), we easily show that
$M(\left\vert \nabla u\right\vert ^{2})=0$, or equivalently $\lim
_{R\rightarrow\infty}R^{-d}\int_{B_{R}(x)}\left\vert \nabla u\right\vert
^{2}=0$. We infer from \cite[page 82]{Giaquinta} that $u$ is constant, so that
$\nabla u=0$. The uniqueness follows, and the proof is completed.
\end{proof}

\section{Homogenization result: Proof of Theorem \ref{t1.1}\label{sec4}}

This section is devoted to the proof of Theorem \ref{t1.1}. Unless otherwise
stated, throughout this section, we do not restrict ourselves to ergodic
algebras with mean value.

\subsection{Passage to the limit}

\begin{proof}
[Proof of Theorem \ref{t1.1}]By (\ref{*11}), the sequence $(u_{\varepsilon
})_{\varepsilon\in E}$ of solutions of problem (\ref{*10}) is bounded in
$H_{0}^{1}(\Omega)$. Then by Theorem \ref{t2.3} there exists a subsequence
$E^{\prime}$ of $E$ and a couple $(u_{0},u_{1})\in H_{0}^{1}(\Omega
;I_{\mathcal{A}}^{2})\times L^{2}(\Omega;B_{\#\mathcal{A}}^{1,2}%
(\mathbb{R}^{d}))$ such that as $E^{\prime}\ni\varepsilon\rightarrow0$,
\begin{align}
u_{\varepsilon}  &  \rightarrow u_{0}\text{ in }L^{2}(\Omega)\text{-weak
}\Sigma\text{,}\label{1.17}\\
\nabla u_{\varepsilon}  &  \rightarrow\nabla u_{0}+\nabla_{y}u_{1}\text{ in
}L^{2}(\Omega)^{d}\text{-weak }\Sigma\text{.} \label{1.18}%
\end{align}
Since $H_{0}^{1}(\Omega)$ is compactly embedded in $L^{2}(\Omega)$, we deduce
from the weak convergence $u_{\varepsilon}\rightarrow\underline{u}_{0}$ in
$H_{0}^{1}(\Omega)$-weak, that $u_{\varepsilon}\rightarrow\underline{u}_{0}$
in $L^{2}(\Omega)$-strong, where $\underline{u}_{0}(x)=M(u_{0}(x,\cdot))$. It
follows that $u_{\varepsilon}\rightarrow\underline{u}_{0}$ in $L^{2}(\Omega
)$-weak $\Sigma$, and hence $u_{0}=\underline{u}_{0}\in H_{0}^{1}(\Omega)$.
Bearing this in mind, let us consider now the following functions: $\psi
_{0}\in\mathcal{C}_{0}^{\infty}(\Omega),$ $\psi_{1}\in\mathcal{C}_{0}^{\infty
}(\Omega)\otimes\mathcal{A}^{\infty}$ and $\Phi=(\psi_{0},\psi_{1})$. Let us
define $\Phi_{\varepsilon}(x)=\psi_{0}(x)+\varepsilon\psi_{1}^{\varepsilon
}(x)$ where $\psi_{1}^{\varepsilon}(x)=\psi_{1}(x,x/\varepsilon)$. Then
$\Phi_{\varepsilon}\in\mathcal{C}_{0}^{\infty}(\Omega)$ and we have
$\nabla\Phi_{\varepsilon}=(\nabla\psi_{0})^{\varepsilon}+\varepsilon
(\nabla\psi_{1})^{\varepsilon}+(\nabla_{y}\psi_{1})^{\varepsilon}$, where we
used the notation $\phi^{\varepsilon}(x)=\phi(x,x/\varepsilon)$. It follows
obviously that as $\varepsilon\rightarrow0$,
\begin{align}
\Phi_{\varepsilon}  &  \rightarrow\psi_{0}\text{ in }L^{2}(\Omega
)\text{-strongly,}\label{1.20}\\
\nabla\Phi_{\varepsilon}  &  \rightarrow\nabla\psi_{0}+\nabla_{y}\psi
_{1}\text{ in }L^{2}(\Omega)^{d}\text{-strongly }\Sigma,\label{1.21}\\
&  (\nabla\Phi_{\varepsilon})_{\varepsilon}\text{ is bounded in }L^{\infty
}(\Omega)^{d}. \label{1.22}%
\end{align}
The variational formulation of (\ref{*10}) (where we have taken $\Phi
_{\varepsilon}$ as test function) reads as
\begin{equation}
\int_{\Omega}(A^{\varepsilon}\nabla u_{\varepsilon}\cdot\nabla\Phi
_{\varepsilon}+V^{\varepsilon}u_{\varepsilon}\cdot\nabla\Phi_{\varepsilon
})dx+\int_{\Omega}(B^{\varepsilon}\nabla u_{\varepsilon}+a_{0}^{\varepsilon
}u_{\varepsilon}+\mu u_{\varepsilon})\Phi_{\varepsilon}dx=\left\langle
\mathbf{f},\Phi_{\varepsilon}\right\rangle \label{1.2222}%
\end{equation}
where $\mathbf{f}=f+\operatorname{div}F\in H^{-1}(\Omega)$. By (\ref{1.18}),
(\ref{1.21}) and (\ref{1.22}), we have that%
\[
\nabla u_{\varepsilon}\cdot\nabla\Phi_{\varepsilon}\rightarrow\mathbb{D}%
\mathbf{u}\cdot\mathbb{D}\Phi\text{ in }L^{2}(\Omega)^{d}\text{-weak }%
\Sigma\text{ as }E^{\prime}\ni\varepsilon\rightarrow0
\]
where we have denoted
\begin{equation}
\mathbb{D}\mathbf{u}=\nabla u_{0}+\nabla_{y}u_{1}\text{ (}\mathbf{u}%
=(u_{0},u_{1})\text{) and}\ \mathbb{D}\Phi=\nabla\psi_{0}+\nabla_{y}\psi_{1}.
\label{1.222}%
\end{equation}
On the other hand, $A\in B_{\mathcal{A}}^{2,\infty}(\mathbb{R}^{d})^{d\times
d}=(B_{\mathcal{A}}^{2}(\mathbb{R}^{d})\cap L^{\infty}(\mathbb{R}%
^{d}))^{d\times d}$; so, taking $A^{\varepsilon}$ as test function in the
definition of weak $\Sigma$-convergence, we get, as $E^{\prime}\ni
\varepsilon\rightarrow0$,
\[
\int_{\Omega}A^{\varepsilon}\nabla u_{\varepsilon}\cdot\nabla\Phi
_{\varepsilon}dx\rightarrow\int_{\Omega}M(A\mathbb{D}u\cdot\mathbb{D}\Phi)dx.
\]
The convergence $u_{\varepsilon}\rightarrow u_{0}$ in $L^{2}(\Omega)$-strong
together with (\ref{1.21}) imply, using $V$ as test function
\[
\int_{\Omega}V^{\varepsilon}u_{\varepsilon}\cdot\nabla\Phi_{\varepsilon
}dx\rightarrow\int_{\Omega}u_{0}M(V\cdot\mathbb{D}\Phi)dx.
\]
Next, we take $B$ as test function and use (\ref{1.18}) and (\ref{1.20}) to
obtain
\[
\int_{\Omega}(B^{\varepsilon}\cdot\nabla u_{\varepsilon})\Phi_{\varepsilon
}dx\rightarrow\int_{\Omega}M(B\cdot\mathbb{D}\mathbf{u})\psi_{0}dx.
\]
By (\ref{1.17}) and (\ref{1.20}) we get, using $a_{0}$ as test function
\[
\int_{\Omega}(a_{0}^{\varepsilon}+\mu)u_{\varepsilon}\Phi_{\varepsilon
}dx\rightarrow\int_{\Omega}(M(a_{0})+\mu)\psi_{0}u_{0}dx.
\]
Finally from the obvious result $\Phi_{\varepsilon}\rightarrow\psi_{0}$ in
$H_{0}^{1}(\Omega)$-weak, we infer $\left\langle \mathbf{f},\Phi_{\varepsilon
}\right\rangle \rightarrow\left\langle \mathbf{f},\psi_{0}\right\rangle $.
Putting together the above convergence results yield in (\ref{1.2222}), as
$E^{\prime}\ni\varepsilon\rightarrow0$,
\begin{equation}
\int_{\Omega}M((A\mathbb{D}\mathbf{u}+Vu_{0})\cdot\mathbb{D}\Phi
)dx+\int_{\Omega}M(B\cdot\mathbb{D}\mathbf{u}+a_{0}u_{0}+\mu u_{0})\psi
_{0}dx=\left\langle \mathbf{f},\psi_{0}\right\rangle \label{1.23}%
\end{equation}
for every $\psi_{0}\in\mathcal{C}_{0}^{\infty}(\Omega)$ and $\psi_{1}%
\in\mathcal{C}_{0}^{\infty}(\Omega)\otimes\mathcal{A}^{\infty}$.

The integral identity (\ref{1.23}) is the so-called \textit{global homogenized
problem}. It is equivalent to the following two identities
\begin{gather}
\int_{\Omega}M(A\mathbb{D}\mathbf{u}+Vu_{0})\cdot\nabla\psi_{0}dx+\int
_{\Omega}M(B\cdot\mathbb{D}\mathbf{u}+a_{0}u_{0}+\mu u_{0})\psi_{0}%
dx=\left\langle \mathbf{f},\psi_{0}\right\rangle \ \forall\psi_{0}%
\in\mathcal{C}_{0}^{\infty}(\Omega),\label{1.24}\\
\int_{\Omega}M((A\mathbb{D}\mathbf{u}+Vu_{0})\cdot\nabla_{y}\psi
_{1})dx=0,\ \ \ \forall\psi_{1}\in\mathcal{C}_{0}^{\infty}(\Omega
)\otimes\mathcal{A}^{\infty}. \label{1.25}%
\end{gather}
Let us derive from the preceding two equations (\ref{1.24}) and (\ref{1.25}),
the homogenized equation whose $u_{0}$ is solution. To this end, let us take
$\psi_{1}(x,y)=\varphi(x)w(y)$, with $\varphi\in\mathcal{C}_{0}^{\infty
}(\Omega)$ and $w\in\mathcal{A}^{\infty}$ in (\ref{1.25}). Then%
\[
\int_{\Omega}M((A\mathbb{D}\mathbf{u}+Vu_{0})\cdot\nabla_{y}w)\varphi
(x)dx=0,\ \ \ \forall\varphi\in\mathcal{C}_{0}^{\infty}(\Omega),\ \forall
w\in\mathcal{A}^{\infty},
\]
which yields%
\begin{equation}
M((A\mathbb{D}\mathbf{u}(x,\cdot)+Vu_{0}(x))\cdot\nabla_{y}w)=0\ \ \ \forall
w\in\mathcal{A}^{\infty},\ \text{a.e. }x\in\Omega. \label{1.26}%
\end{equation}
So, fix $\xi\in\mathbb{R}^{d}$, $\lambda\in\mathbb{R}$ and consider the
equation
\begin{equation}
-\operatorname{div}_{y}(A(\xi+\nabla_{y}v_{\xi,\lambda})+V\lambda)=0\text{ in
}\mathbb{R}^{d},\ v_{\xi,\lambda}\in B_{\#\mathcal{A}}^{1,2}(\mathbb{R}^{d})
\label{1.28}%
\end{equation}
that is a linear and a well-posed problem.

We seek for solutions of the form $v_{\xi,\lambda}=u_{\xi}+\eta_{\lambda},$
where $u_{\xi}$ and $\eta_{\lambda}$ are solutions, respectively, of the
following equations:
\begin{align}
-\operatorname{div}_{y}(A(\xi+\nabla_{y}u_{\xi}))  &  =0\text{ in}%
\ \mathbb{R}^{d}\text{, }\ u_{\xi}\in B_{\#\mathcal{A}}^{1,2}(\mathbb{R}%
^{d}),\label{1.29}\\
-\operatorname{div}_{y}(A\nabla_{y}\eta_{\lambda})  &  =\operatorname{div}%
_{y}(V\lambda)\text{ in }\mathbb{R}^{d}\text{,}\ \ \ \eta_{\lambda}\in
B_{\#\mathcal{A}}^{1,2}(\mathbb{R}^{d}). \label{1.30}%
\end{align}
Equation (\ref{1.29}) can be rewritten as
\[
-\operatorname{div}_{y}(A\nabla_{y}u_{\xi})=\operatorname{div}_{y}(A\xi).
\]
In view of Assumptions (H1), (H2) and (H3), we get by Theorem \ref{t4.1} the
existence of unique solutions to (\ref{1.28}), (\ref{1.29}) and (\ref{1.30}).

Going back to (\ref{1.28}) and taking $\xi=\nabla u_{0}(x)$ and $\lambda
=u_{0}(x)$, we get that $u_{\nabla u_{0}(x),u_{0}(x)}$ is the unique solution
(in the sense of Theorem \ref{t4.1}) of
\begin{equation}
-\operatorname{div}_{y}(A(\nabla u_{0}(x)+\nabla_{y}u_{\nabla u_{0}%
(x),u_{0}(x)})+Vu_{0}(x))=0\text{ \ in}\ \mathbb{R}^{d}. \label{1.31}%
\end{equation}
Comparing the variational form (with respect to the duality arising from the
mean value as in (\ref{1.26})) of (\ref{1.31}) with (\ref{1.26}), we deduce
from the uniqueness of the solution in $B_{\#\mathcal{A}}^{1,2}(\mathbb{R}%
^{d})$ that
\begin{equation}
u_{1}(x,\cdot)=u_{\nabla u_{0}(x),u_{0}(x)}=u_{\nabla u_{0}(x)}+\eta
_{u_{0}(x)}\text{ in }B_{\#\mathcal{A}}^{1,2}(\mathbb{R}^{d})\text{ \ a.e.
}x\in\Omega. \label{1.32}%
\end{equation}
Denoting $\chi=(\chi_{j})_{1\leq j\leq d},$ where for $1\leq j\leq d$,
$\chi_{j}=u_{e_{j}}$ is the solution of (\ref{1.29}) determined by Theorem
\ref{t4.1}, and corresponding to $\xi=e_{j}$ ($e_{j}$ the $j$-th vector of the
canonical basis of $\mathbb{R}^{d}$), we deduce from the uniqueness of
$u_{\nabla u_{0}}$ that $u_{\nabla u_{0}}=\chi\cdot\nabla u_{0}$. Also we have
in (\ref{1.30}) $\eta_{u_{0}}=\chi_{0}u_{0}$ where $\chi_{0}=\eta_{1}$ so that
(\ref{1.24}) can be written as
\[%
\begin{array}
[c]{l}%
{\displaystyle\int_{\Omega}}
M(A(I_{d}+\nabla_{y}\chi)\cdot\nabla u_{0}+(A\nabla_{y}\chi_{0}+V\mathbf{)}%
u_{0})\cdot\nabla\psi_{0}dx+\\
\ \ \ +%
{\displaystyle\int_{\Omega}}
[(M(B(I_{d}+\nabla_{y}\chi))\cdot\nabla u_{0})\psi_{0}+(M(B\cdot\nabla\chi
_{0}+a_{0})+\mu)u_{0}\psi_{0}]dx=\left\langle \mathbf{f},\psi_{0}\right\rangle
.
\end{array}
\]
Setting $\widehat{A}=M(A(I_{d}+\nabla_{y}\chi))$, $\widehat{V}=M(A\nabla
_{y}\chi_{0}+V\mathbf{)}$, $\widehat{B}=M(B(I_{d}+\nabla_{y}\chi))$ and
$\widehat{a}_{0}=M(B\cdot\nabla\chi_{0}+a_{0})$, we end up with
\[
\int_{\Omega}(\widehat{A}\nabla u_{0}+\widehat{V}u_{0})\cdot\nabla\psi
_{0}dx+\int_{\Omega}(\widehat{B}\cdot\nabla u_{0}+\widehat{a}_{0}u_{0}+\mu
u_{0})\psi_{0}dx=\left\langle \mathbf{f},\psi_{0}\right\rangle \ \ \ \forall
\psi_{0}\in\mathcal{C}_{0}^{\infty}(\Omega),
\]
which is nothing else but the variational formulation of the homogenized
equation, viz.
\begin{equation}
\mathcal{P}_{0}u_{0}=f+\operatorname{div}F\text{ \ in }\Omega,\ \ \ u_{0}\in
H_{0}^{1}(\Omega). \label{1.33}%
\end{equation}
where
\begin{equation}
\mathcal{P}_{0}=-\operatorname{div}(\widehat{A}\nabla+\widehat{V})+\widehat
{B}\nabla+(\widehat{a}_{0}+\mu). \label{1.33'}%
\end{equation}
Moreover, the local corrector problems are given by:
\begin{align*}
-\operatorname{div}_{y}(A(e_{j}+\nabla_{y}\chi_{j}))  &  =0\text{ in
}\mathbb{R}^{d},\ \ \ \chi_{j}\in B_{\#\mathcal{A}}^{1,2}(\mathbb{R}%
^{d}),\ j=1,...,d,\\
-\operatorname{div}_{y}(A\nabla_{y}\chi_{0}+V)  &  =0\text{ in }\mathbb{R}%
^{d},\ \ \ \chi_{0}\in B_{\#\mathcal{A}}^{1,2}(\mathbb{R}^{d}).
\end{align*}
We may easily show that the matrix $\widehat{A}$ is symmetric and coercive.
Also $\widehat{B},\widehat{V}\in\mathbb{R}^{d}$ and $\widehat{a}_{0}%
\in\mathbb{R}$. Hence the uniqueness of $u_{0}$ solution to (\ref{1.33}) is
ensured. The convergence of the entire sequence $(u_{\varepsilon
})_{\varepsilon>0}$ in $H_{0}^{1}(\Omega)$ therefore stems from the uniqueness
of the solution to the homogenized equation (\ref{1.33}).

We recall that one has
\begin{align*}
u_{\varepsilon}  &  \rightarrow u_{0}\text{ in }H_{0}^{1}(\Omega)\text{-weak
and in }L^{2}(\Omega)\text{-strong}\\
\nabla u_{\varepsilon}  &  \rightarrow\nabla u_{0}+\nabla_{y}u_{1}\text{ in
}L^{2}(\Omega)^{d}\text{-weak }\Sigma.
\end{align*}
This being so, let us prove the convergence (\ref{1.7'}). Let $r_{\varepsilon
}=u_{\varepsilon}-u_{0}-\varepsilon u_{1}^{\varepsilon}$ where
\[
u_{1}^{\varepsilon}(x)=u_{1}(x,x/\varepsilon)=\chi(x/\varepsilon)\nabla
u_{0}(x)+\chi_{0}(x/\varepsilon)u_{0}(x);
\]
then $\nabla r_{\varepsilon}=\nabla u_{\varepsilon}-\nabla u_{0}-(\nabla
_{y}u_{1})^{\varepsilon}-\varepsilon(\nabla u_{1})^{\varepsilon}$. We recall
that we have assumed that $u_{1}\in H^{1}(\Omega;\mathcal{A}^{1})$, so that we
may take $u_{1}$, $\nabla u_{1}$ and $\nabla_{y}u_{1}$ as test functions since
they belong to $L^{2}(\Omega;\mathcal{A})$. We therefore have, according to
(\ref{1.2}) and (\ref{1.3}) and using the variational formulation of
(\ref{*10}) with a test function $u_{\varepsilon}$:
\begin{align}
0  &  \leq\alpha||\nabla r_{\varepsilon}||_{L^{2}(\Omega)}^{2}\leq\int
_{\Omega}A^{\varepsilon}\nabla r_{\varepsilon}\cdot\nabla r_{\varepsilon
}dx\label{1*}\\
&  =\int_{\Omega}A^{\varepsilon}\nabla u_{\varepsilon}\cdot\nabla
u_{\varepsilon}-\int_{\Omega}A^{\varepsilon}\nabla u_{\varepsilon}\cdot(\nabla
u_{0}+(\nabla_{y}u_{1})^{\varepsilon}+\varepsilon(\nabla u_{1})^{\varepsilon
})\nonumber\\
&  -\int_{\Omega}A^{\varepsilon}(\nabla u_{0}+(\nabla_{y}u_{1})^{\varepsilon
}+\varepsilon(\nabla u_{1})^{\varepsilon})\cdot\nabla u_{\varepsilon
}\nonumber\\
&  +\int_{\Omega}A^{\varepsilon}(\nabla u_{0}+(\nabla_{y}u_{1})^{\varepsilon
}+\varepsilon(\nabla u_{1})^{\varepsilon})\cdot(\nabla u_{0}+(\nabla_{y}%
u_{1})^{\varepsilon}+\varepsilon(\nabla u_{1})^{\varepsilon})\nonumber\\
&  =\left\langle \mathbf{f},u_{\varepsilon}\right\rangle -\int_{\Omega
}(u_{\varepsilon}V^{\varepsilon}\cdot\nabla u_{\varepsilon}-(B^{\varepsilon
}\nabla u_{\varepsilon})u_{\varepsilon}-(a_{0}^{\varepsilon}+\mu)\left\vert
u_{\varepsilon}\right\vert ^{2})\nonumber\\
&  +\int_{\Omega}A^{\varepsilon}(\nabla u_{0}+(\nabla_{y}u_{1})^{\varepsilon
}+\varepsilon(\nabla u_{1})^{\varepsilon})\cdot(\nabla u_{0}+(\nabla_{y}%
u_{1})^{\varepsilon}+\varepsilon(\nabla u_{1})^{\varepsilon})\nonumber\\
&  -\int_{\Omega}A^{\varepsilon}\nabla u_{\varepsilon}\cdot(\nabla
u_{0}+(\nabla_{y}u_{1})^{\varepsilon}+\varepsilon(\nabla u_{1})^{\varepsilon
})-\int_{\Omega}A^{\varepsilon}(\nabla u_{0}+(\nabla_{y}u_{1})^{\varepsilon
}+\varepsilon(\nabla u_{1})^{\varepsilon})\cdot\nabla u_{\varepsilon
}\nonumber\\
&  +\varepsilon\int_{\Omega}A^{\varepsilon}(\nabla u_{0}+(\nabla_{y}%
u_{1})^{\varepsilon}+\varepsilon(\nabla u_{1})^{\varepsilon})\cdot(\nabla
u_{1})^{\varepsilon}-\varepsilon\int_{\Omega}(A^{\varepsilon}\nabla
u_{\varepsilon}\cdot(\nabla u_{1})^{\varepsilon}+(A\nabla u_{1})^{\varepsilon
}\cdot\nabla u_{\varepsilon}).\nonumber
\end{align}
The only term in (\ref{1*}) in which the limit passage requires explanation
is
\[
\int_{\Omega}A^{\varepsilon}\nabla u_{\varepsilon}\cdot(\nabla u_{0}%
+(\nabla_{y}u_{1})^{\varepsilon}+\varepsilon(\nabla u_{1})^{\varepsilon})dx.
\]
In the above mentioned term, we have that
\[
\nabla u_{\varepsilon}\rightarrow\nabla u_{0}+\nabla_{y}u_{1}\text{ in }%
L^{2}(\Omega)^{d}\text{-weak }\Sigma.
\]
Now since $\nabla_{y}u_{1}$ and $\nabla u_{1}$ both belong to $L^{2}%
(\Omega;\mathcal{A})^{d}$, we get
\[
\nabla u_{0}+(\nabla_{y}u_{1})^{\varepsilon}+\varepsilon(\nabla u_{1}%
)^{\varepsilon}\rightarrow\nabla u_{0}+\nabla_{y}u_{1}\text{ in }L^{2}%
(\Omega)^{d}\text{-strong }\Sigma.
\]
Hence, taking $A^{\varepsilon}(x)=A(x/\varepsilon)$ as test function (recall
that $A\in(B_{\mathcal{A}}^{2}(\mathbb{R}^{d})\cap L^{\infty}(\mathbb{R}%
^{d}))^{d\times d}$) and letting $\varepsilon\rightarrow0$ we get readily
\[
\int_{\Omega}A^{\varepsilon}\nabla u_{\varepsilon}\cdot(\nabla u_{0}%
+(\nabla_{y}u_{1})^{\varepsilon}+\varepsilon(\nabla u_{1})^{\varepsilon
})dx\rightarrow\int_{\Omega}M(A\mathbb{D}\mathbf{u}\cdot\mathbb{D}%
\mathbf{u})dx.
\]
Therefore, passing to the limit as $\varepsilon\rightarrow0$ and taking into
account (\ref{1.2222}), the right-hand side tends to%
\[
\left\langle \mathbf{f},u_{0}\right\rangle -\int_{\Omega}M(Vu_{0}%
\mathbb{D}\mathbf{u})dx-\int_{\Omega}M(A\mathbb{D}\mathbf{u}\cdot
\mathbb{D}\mathbf{u})dx-\int_{\Omega}M(B\cdot\mathbb{D}\mathbf{u}+a_{0}%
u_{0}+\mu u_{0})u_{0}dx
\]
that is zero by (\ref{1.23}) and density arguments. Then, as $\varepsilon
\rightarrow0$, $\left\Vert \nabla r_{\varepsilon}\right\Vert _{L^{2}(\Omega
)}\rightarrow0$, which proves the convergence (\ref{1.7'}). This ends the
proof of Theorem \ref{t1.1}.
\end{proof}

\subsection{Some applications of Theorem \ref{t1.1}}

We provide here some concrete situations for which Theorem \ref{t1.1} holds
true. The framework is in accordance with assumption (H3).

\subsubsection{\textbf{Problem 1 (Periodic homogenization)}}

\begin{itemize}
\item[(H3)$_{1}$] We assume that the functions $A$, $V$ and $a_{0}$ are
periodic of period $1$ in each coordinate.
\end{itemize}

Then, this suggests us to take $\mathcal{A}=\mathcal{C}_{per}(Y)$, the Banach
algebra of continuous $Y$-periodic functions defined on $\mathbb{R}^{d}$,
where $Y=(0,1)^{d}$. Therefore we obtain (H3) with $\mathcal{A}=\mathcal{C}%
_{per}(Y)$. In this case we have $B_{\mathcal{A}}^{p}(\mathbb{R}^{d}%
)=L_{per}^{p}(Y)$ for $1\leq p\leq\infty$ and $B_{\mathcal{A}}^{1,2}%
(\mathbb{R}^{d})=H_{per}^{1}(Y)$.

In this special case, the homogenization result reads as follows.

\begin{theorem}
\label{t6.1}Assume \emph{(H1)-(H2)} and \emph{(H3)}$_{1}$ hold. Then the
sequence of solutions of \emph{(\ref{1.1})} weakly converges in $H_{0}%
^{1}(\Omega)$ to the solution of \emph{(\ref{1.4'})} where
\begin{align*}
\widehat{A}  &  =\int_{Y}A(I_{d}+\nabla_{y}\chi)dy,\ \widehat{B}=\int
_{Y}B(I_{d}+\nabla_{y}\chi)dy,\ \widehat{V}=\int_{Y}(V+A\nabla_{y}\chi
_{0})dy\text{ and }\\
\widehat{a}_{0}  &  =\int_{Y}(B\cdot\nabla_{y}\chi_{0}+a_{0})dy,
\end{align*}
$\chi=(\chi_{j})_{1\leq j\leq d}\in(H_{per}^{1}(Y)/\mathbb{R})^{d}$ and
$\chi_{0}\in H_{per}^{1}(Y)/\mathbb{R}$ being defined by the cell problems
\[
-\operatorname{div}_{y}(A(e_{j}+\nabla_{y}\chi_{j}))=0\text{ in }Y\text{ and
}-\operatorname{div}_{y}(A\nabla_{y}\chi_{0}+V)=0\text{ in }Y.
\]

\end{theorem}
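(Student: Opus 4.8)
The plan is to obtain Theorem \ref{t6.1} as a direct specialization of Theorem \ref{t1.1} to the periodic algebra with mean value. First I would take $\mathcal{A}=\mathcal{C}_{per}(Y)$, $Y=(0,1)^d$: this is a closed, translation-invariant subalgebra of $\mathrm{BUC}(\mathbb{R}^d)$ that contains the constants, and every element has a mean value, namely $M(u)=\int_Y u\,dy$, so it is indeed an algebra with mean value. Assumption (H3)$_1$ (periodicity of $A$, $B$, $V$, $a_0$), together with (H1)--(H2), then says exactly that $A\in B_{\mathcal{A}}^{2}(\mathbb{R}^d)^{d\times d}$, $B,V\in B_{\mathcal{A}}^{2}(\mathbb{R}^d)^{d}$ and $a_0\in B_{\mathcal{A}}^{2}(\mathbb{R}^d)$, so (H3) is satisfied for this choice of $\mathcal{A}$. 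I would then use the identifications already recorded above, $B_{\mathcal{A}}^{p}(\mathbb{R}^d)=L_{per}^{p}(Y)$ and $B_{\mathcal{A}}^{1,2}(\mathbb{R}^d)=H_{per}^{1}(Y)$, so that $M(\cdot)$ becomes $\int_Y\cdot\,dy$ throughout.

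The key structural observation is that $\mathcal{C}_{per}(Y)$ is an \emph{ergodic} algebra: a $Y$-periodic function whose distributional gradient vanishes is constant, hence by \eqref{inv} the invariant space $I_{\mathcal{A}}^{p}$ reduces to the constants. Consequently, as explained in Remark \ref{r2.2'}, the limit $u_0$ delivered by Theorem \ref{t1.1} does not depend on $y$, i.e. $u_0\in H_0^1(\Omega)$, and the weak $\Sigma$-convergence of $(u_\varepsilon)$ to $u_0$ upgrades to weak convergence in $H_0^1(\Omega)$. Applying Theorem \ref{t1.1} verbatim, $u_0$ is the unique solution of \eqref{1.4'} with homogenized coefficients given by \eqref{1.5'}; rewriting $M$ as $\int_Y\cdot\,dy$ and translating the corrector problems \eqref{1.6'}--\eqref{1.66'} into cell problems on $Y$ produces precisely the stated formulas for $\widehat A$, $\widehat B$, $\widehat V$, $\widehat a_0$ and the cell equations for $\chi=(\chi_j)_{1\le j\le d}$ and $\chi_0$.

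The only point that requires a genuine (though short) argument — and therefore the step I would treat most carefully — is the identification $B_{\#\mathcal{A}}^{1,2}(\mathbb{R}^d)=H_{per}^{1}(Y)/\mathbb{R}$, needed to locate the correctors. If $u\in W_{loc}^{1,2}(\mathbb{R}^d)$ has $\nabla u\in L_{per}^{2}(Y)^d$ with $M(\nabla u)=0$, then $\nabla u$ being $Y$-periodic of zero mean forces $u$ itself to be $Y$-periodic (its periodic-plus-affine decomposition has trivial affine part), so $u\in H_{per}^1(Y)$; since in $B_{\#\mathcal{A}}^{1,2}$ elements are identified by their gradients, the space is $H_{per}^1(Y)/\mathbb{R}$, and the reverse inclusion is immediate. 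With this, Theorem \ref{t4.1} applied with $H=Ae_j$ (resp. $H=V$) gives existence and uniqueness of $\chi_j$ (resp. $\chi_0$) in $H_{per}^1(Y)/\mathbb{R}$ — alternatively one invokes Lax--Milgram on $H_{per}^1(Y)/\mathbb{R}$ using the coercivity in \eqref{1.2}. No new obstacle arises beyond this bookkeeping: the theorem is a corollary obtained by substituting periodic data into the general result.
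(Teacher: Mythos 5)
Your proposal is correct and follows essentially the same route as the paper: the paper's proof of Theorem \ref{t6.1} consists precisely of specializing Theorem \ref{t1.1} to $\mathcal{A}=\mathcal{C}_{per}(Y)$ and using $M(u)=\int_Y u\,dy$ together with the identifications $B_{\mathcal{A}}^{p}(\mathbb{R}^d)=L_{per}^{p}(Y)$ and $B_{\mathcal{A}}^{1,2}(\mathbb{R}^d)=H_{per}^{1}(Y)$ recorded just before the statement. Your additional verifications (ergodicity of $\mathcal{C}_{per}(Y)$ so that $I_{\mathcal{A}}^{p}$ is the constants, and the identification $B_{\#\mathcal{A}}^{1,2}(\mathbb{R}^d)=H_{per}^{1}(Y)/\mathbb{R}$ via the zero-mean periodic gradient argument) are correct and simply make explicit the bookkeeping the paper leaves implicit.
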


\begin{proof}
The above result stems from the characterization of the mean value in the
periodic setting: for $u\in L_{per}^{p}(Y)$, $M(u)=\int_{Y}u(y)dy$.
\end{proof}

\subsubsection{\textbf{Problem 2 (Almost periodic homogenization)}}

The functions $A$, $V$ and $a_{0}$ are assumed to be Besicovitch almost
periodic \cite{Besicovitch}. We then get (H3) with $\mathcal{A}=AP(\mathbb{R}%
^{d})$, where $AP(\mathbb{R}^{d})$ \cite{Besicovitch, Bohr} is the space of
continuous almost periodic functions on $\mathbb{R}^{d}$. In this case, the
mean value of a function $u\in AP(\mathbb{R}^{d})$ can be obtained as the
unique constant belonging to the closed convex hull of the family of the
translates $(u(\cdot+a))_{a\in\mathbb{R}^{d}}$; see e.g. \cite{Jacobs}.

\subsubsection{\textbf{Problem 3 (Asymptotic periodic homogenization)}}

Let $\mathcal{B}_{\infty}(\mathbb{R}^{d})$ denote the space of all continuous
functions $\psi\in\mathcal{C}(\mathbb{R}^{d})$ such that $\psi(\zeta)$ has a
finite limit in $\mathbb{R}$ as $\left\vert \zeta\right\vert \rightarrow
\infty$. It is known that $\mathcal{B}_{\infty}(\mathbb{R}^{d})$ is an algebra
with mean value on $\mathbb{R}^{d}$ for which the mean value of a function
$u\in\mathcal{B}_{\infty}(\mathbb{R}^{d})$ is obtained as the limit at infinity.

This being so, we mean to homogenize problem (\ref{1.1}) under the assumption

\begin{itemize}
\item[(H3)$_{2}$] $A\in(L_{per}^{2}(Y))^{d\times d}$, $V,B\in\mathcal{B}%
_{\infty}(\mathbb{R}^{d})^{d}$ and $a_{0}\in L_{per}^{2}(Y)$.
\end{itemize}

It is an easy exercise to see that the appropriate algebra with mean value
here is $\mathcal{A}=\mathcal{B}_{\infty}(\mathbb{R}^{d})+\mathcal{C}%
_{per}(Y)\equiv\mathcal{B}_{\infty,per}(\mathbb{R}^{d})$. Indeed, we know that
$\mathcal{B}_{\infty,per}(\mathbb{R}^{d})$ is an algebra with mean value on
$\mathbb{R}^{d}$ with the property that $\mathcal{B}_{\infty,per}%
(\mathbb{R}^{d})=\mathcal{C}_{0}(\mathbb{R}^{d})\oplus\mathcal{C}_{per}(Y)$
(direct and topological sum; see e.g. \cite{NA}) where $\mathcal{C}%
_{0}(\mathbb{R}^{d})$ stands for the space of those continuous functions $u$
in $\mathbb{R}^{d}$ that vanish at infinity. Since $\lim_{\left\vert
y\right\vert \rightarrow\infty}u(y)=0$ for any $u\in\mathcal{C}_{0}%
(\mathbb{R}^{d})$, any element in $\mathcal{B}_{\infty,per}(\mathbb{R}^{d})$
is asymptotically a periodic function.

The general asymptotic periodic homogenization is obtained by assuming that

\begin{itemize}
\item[(H3)$_{3}$] $A,B,V$ and $a_{0}$ belong to $B_{\mathcal{A}}%
^{2}(\mathbb{R}^{d})^{d\times d}$, $B_{\mathcal{A}}^{2}(\mathbb{R}^{d})^{d}$,
$B_{\mathcal{A}}^{2}(\mathbb{R}^{d})^{d}$ and $B_{\mathcal{A}}^{2}%
(\mathbb{R}^{d})$, respectively, with $\mathcal{A}=\mathcal{B}_{\infty
,per}(\mathbb{R}^{d})$.
\end{itemize}

\noindent Then we get the homogenization of (\ref{1.1}) under either
(H3)$_{2}$ or (H3)$_{3}$.

\subsubsection{\textbf{Problem 4 (Asymptotic almost periodic homogenization)}}

In Problem 3 above, we may replace $\mathcal{C}_{per}(Y)$ by $AP(\mathbb{R}%
^{d})$ and get the algebra of asymptotic almost periodic functions denoted by
$\mathcal{B}_{\infty,AP}(\mathbb{R}^{d})=\mathcal{B}_{\infty}(\mathbb{R}%
^{d})+AP(\mathbb{R}^{d})$. In this case, we may assume that the functions $A$,
$V$ and $a_{0}$ belong to $B_{\mathcal{A}}^{2}(\mathbb{R}^{d})^{d\times d}$,
$B_{\mathcal{A}}^{2}(\mathbb{R}^{d})^{d}$, $B_{\mathcal{A}}^{2}(\mathbb{R}%
^{d})^{d}$ and $B_{\mathcal{A}}^{2}(\mathbb{R}^{d})$, respectively, where
$\mathcal{A}=\mathcal{B}_{\infty,AP}(\mathbb{R}^{d})$. The conclusion of
Theorem \ref{t1.1} holds as well.

\section{Existence of true asymptotic periodic correctors: Proof of Theorem
\ref{t1.2}\label{sec5'}}

We consider, in the asymptotic periodic setting, the corrector problems
(\ref{1.29}) and (\ref{1.30}) in which we choose $\xi=e_{j}$ (the $j$th vector
of the canonical basis in $\mathbb{R}^{d}$) and $\lambda=1$. The main aim of
this section is to derive the existence of a true corrector solution of either
equation (\ref{2.0}) or (\ref{2.0'}) below:
\begin{equation}
-\operatorname{div}_{y}(A\nabla_{y}\chi_{0}+V)=0\text{ in }\mathbb{R}^{d}
\label{2.0}%
\end{equation}%
\begin{equation}
-\operatorname{div}_{y}(A(e_{j}+\nabla_{y}\chi_{j}))=0\text{ in }%
\mathbb{R}^{d}\text{ }(1\leq j\leq d). \label{2.0'}%
\end{equation}
We investigate the properties of the solution of problem (\ref{2.0}). The
obtained properties will also hold for equation (\ref{2.0'}) since
(\ref{2.0'}) can be obtained from (\ref{2.0}) by replacing in (\ref{2.0}) the
function $V$ by $Ae_{j}$.

\subsection{Preliminaries}

In this section, we are dealing with the special case of asymptotic periodic
functions. The algebra with mean value associated is therefore $\mathcal{A}%
=\mathcal{B}_{\infty,per}(\mathbb{R}^{d}):=\mathcal{C}_{0}(\mathbb{R}%
^{d})\oplus\mathcal{C}_{per}(Y)$. Let us first recall that the Besicovitch
space associated to $\mathcal{A}$ is $B_{\mathcal{A}}^{2}(\mathbb{R}%
^{d})\equiv L_{\infty,per}^{2}(\mathbb{R}^{d})$. Endowed with the seminorm
$\left\Vert u\right\Vert _{2}=(M(\left\vert u\right\vert ^{2}))^{1/2}$ ($u\in
L_{\infty,per}^{2}(\mathbb{R}^{d})$), $L_{\infty,per}^{2}(\mathbb{R}^{d})$ is
a Fr\'{e}chet space. We also recall that $\mathcal{L}_{\infty,per}%
^{2}(\mathbb{R}^{d})=L_{\infty,per}^{2}(\mathbb{R}^{d})/\mathcal{N}$ (where
$\mathcal{N}=\{u\in L_{\infty,per}^{2}(\mathbb{R}^{d}):\left\Vert u\right\Vert
_{2}=0\}$) is a Hilbert space. In view of the decomposition $L_{\infty
,per}^{2}(\mathbb{R}^{d})=L_{0}^{2}(\mathbb{R}^{d})+L_{per}^{2}(Y)$ (where
$L_{0}^{2}(\mathbb{R}^{d})$ is the closure in $\mathfrak{M}^{2}(\mathbb{R}%
^{d})$ of $\mathcal{C}_{0}(\mathbb{R}^{d})$ with respect to the seminorm
$\left\Vert \cdot\right\Vert _{2}$), we notice that those functions in
$L_{\infty,per}^{2}(\mathbb{R}^{d})$ with mean value equal to zero, are those
for which the periodic component has mean value equal to zero since $M(v)=0$
for all $v\in L_{0}^{2}(\mathbb{R}^{d})$. It is an easy task to see that
$\mathcal{N}=L_{0}^{2}(\mathbb{R}^{d})$, so that $\mathcal{L}_{\infty,per}%
^{2}(\mathbb{R}^{d})=L_{\infty,per}^{2}(\mathbb{R}^{d})/L_{0}^{2}%
(\mathbb{R}^{d})$. By the first isomorphism theorem, we have that
$\mathcal{L}_{\infty,per}^{2}(\mathbb{R}^{d})\cong L_{per}^{2}(Y)$. Denote by
$\ell$ the above isomorphism and by $\varrho$ the canonical mapping from
$L_{\infty,per}^{2}(\mathbb{R}^{d})$ onto $\mathcal{L}_{\infty,per}%
^{2}(\mathbb{R}^{d})$: $\varrho(u)=u+L_{0}^{2}(\mathbb{R}^{d})\equiv
\overset{\circ}{u}$. We still denote by $M$ the mean value on $\mathcal{L}%
_{\infty,per}^{2}(\mathbb{R}^{d})$, which is well defined since for $v\in
L_{0}^{2}(\mathbb{R}^{d})$ one has $M(v)=0$. It is easy to see that $\ell$ is
an isometric (topological) isomorphism which is defined by
\begin{equation}
\ell(u_{per}+L_{0}^{2}(\mathbb{R}^{d}))=u_{per}\text{ for }u_{per}\in
L_{per}^{2}(Y). \label{8.1}%
\end{equation}
Also the following identities are easily verified:
\begin{equation}
M(\ell(\overset{\circ}{u}_{per}))=M(u_{per})\text{ for }\overset{\circ}%
{u}_{per}=\varrho(u_{per})=u_{per}+L_{0}^{2}(\mathbb{R}^{d})\in\mathcal{L}%
_{\infty,per}^{2}(\mathbb{R}^{d}) \label{8.2}%
\end{equation}%
\begin{equation}
\ell(\overset{\circ}{u}_{per}\overset{\circ}{v}_{per})=\ell(\overset{\circ}%
{u}_{per})\ell(\overset{\circ}{v}_{per})\text{ for }u_{per}\in L_{per}%
^{2}(Y)\text{ and }v_{per}\in L_{per}^{2}(Y)\cap L^{\infty}(\mathbb{R}^{d}).
\label{8.3}%
\end{equation}
Let $\mathcal{H}_{\infty,per}^{1}(\mathbb{R}^{d})=\ell^{-1}(H_{per}^{1}(Y))$.
Then proceeding as in \cite{CMP}, we may define (for each $1\leq i\leq d$) a
linear operator $\overline{\partial}/\partial y_{i}:\mathcal{H}_{\infty
,per}^{1}(\mathbb{R}^{d})\rightarrow\mathcal{L}_{\infty,per}^{2}%
(\mathbb{R}^{d})$ by (see (\ref{0.3}))
\begin{equation}
\frac{\overline{\partial}\varrho(u_{per})}{\partial y_{i}}=\varrho\left(
\frac{\partial u_{per}}{\partial y_{i}}\right)  \text{ for }u_{per}\in
H_{per}^{1}(Y). \label{8.4}%
\end{equation}
This leads to a precise definition of $\mathcal{H}_{\infty,per}^{1}%
(\mathbb{R}^{d})$:
\[
\mathcal{H}_{\infty,per}^{1}(\mathbb{R}^{d})=\{\overset{\circ}{u}_{per}%
\in\mathcal{L}_{\infty,per}^{2}(\mathbb{R}^{d}):\overline{\nabla}%
\overset{\circ}{u}_{per}\in\mathcal{L}_{\infty,per}^{2}(\mathbb{R}^{d})^{d}\}
\]
where $\overline{\nabla}=(\overline{\partial}/\partial y_{i})_{1\leq i\leq d}%
$. From the definition of $\ell$ we see that
\begin{equation}
\nabla\ell(\overset{\circ}{u}_{per})=\ell(\overline{\nabla}\overset{\circ}%
{u}_{per})=\ell(\varrho(\nabla u_{per}))\text{, i.e. }\nabla\ell(u_{per}%
+L_{0}^{2}(\mathbb{R}^{d}))=\ell(\nabla u_{per}+L_{0}^{2}(\mathbb{R}^{d})).
\label{8.5}%
\end{equation}
From the above equality (\ref{8.5}) we infer that $\mathcal{H}_{\infty
,per}^{1}(\mathbb{R}^{d})=\varrho(H_{\infty,per}^{1}(\mathbb{R}^{d}))$ where
\begin{equation}
H_{\infty,per}^{1}(\mathbb{R}^{d})=\{u\in L_{\infty,per}^{2}(\mathbb{R}%
^{d}):\nabla u\in L_{\infty,per}^{2}(\mathbb{R}^{d})^{d}\}. \label{2.00}%
\end{equation}
We close these preliminaries by defining a suitable subspace of $\mathcal{H}%
_{\infty,per}^{1}(\mathbb{R}^{d})$ to which the solution of the corrector
problem will belong to. Let $\mathcal{H}_{\#}^{1}(\mathbb{R}^{d}%
)=\{\overset{\circ}{u}_{per}\in\mathcal{H}_{\infty,per}^{1}(\mathbb{R}%
^{d}):M(\overset{\circ}{u}_{per})=0\}\equiv\ell^{-1}(H_{per}^{1}%
(Y)/\mathbb{R})$ where $H_{per}^{1}(Y)/\mathbb{R}=\{u\in H_{per}^{1}%
(Y):\int_{Y}u=0\}$. We endow $\mathcal{H}_{\#}^{1}(\mathbb{R}^{d})$ with the
gradient seminorm denoted by $\left\Vert \cdot\right\Vert _{\#}$:
\[
\left\Vert \overset{\circ}{u}_{per}\right\Vert _{\#}=\left\Vert \overline
{\nabla}\overset{\circ}{u}_{per}\right\Vert _{\mathcal{L}_{\infty,per}%
^{2}(\mathbb{R}^{d})}\equiv\left[  M\left(  \left\vert \overline{\nabla
}\overset{\circ}{u}_{per}\right\vert ^{2}\right)  \right]  ^{\frac{1}{2}%
}\text{ for }\overset{\circ}{u}_{per}\in\mathcal{H}_{\#}^{1}(\mathbb{R}^{d}).
\]
Then since $\ell$ is an isometry, we make use of (\ref{8.5}) to see that
\begin{equation}
\left\Vert \overset{\circ}{u}_{per}\right\Vert _{\#}=\left\Vert \nabla
u_{per}\right\Vert _{L^{2}(Y)},\ \ u_{per}\in H_{per}^{1}(Y)/\mathbb{R}.
\label{8.6}%
\end{equation}
It comes straightforwardly from (\ref{8.6}) that $\left\Vert \cdot\right\Vert
_{\#}$ is actually a norm on $\mathcal{H}_{\#}^{1}(\mathbb{R}^{d})$ which
makes it a Hilbert space.

Finally, we note that $L^{p}(\mathbb{R}^{d})\subset L_{0}^{2}(\mathbb{R}^{d})$
for any $2\leq p<\infty$, so the results that will be proved below will in
particular hold for the corresponding occurrences taken in $L^{p}%
(\mathbb{R}^{d})+L_{per}^{2}(Y)$ ($2\leq p<\infty$).

\subsection{Proof of Theorem \ref{t1.2}}

\begin{proof}
[Proof of Theorem \ref{t1.2}]Since $A=A_{0}+A_{per}$ and $V=V_{0}+V_{per}$
with $A_{0}$, $V_{0}$ having entries in $L_{0}^{2}(\mathbb{R}^{d})$ and
$A_{per}$, $V_{per}$ having entries in $L_{per}^{2}(Y)$, then $A_{per}%
=\ell(\varrho(A))$ and $V_{per}=\ell(\varrho(V))$. Consider the periodic
corrector problem
\begin{equation}
-\operatorname{div}(A_{per}\nabla\chi_{per}+V_{per})=0\text{ in }%
\mathbb{R}^{d},\ \int_{Y}\chi_{per}=0\text{.} \label{8.8}%
\end{equation}
In view of (\ref{2.2}), problem (\ref{8.8}) possesses a unique solution
$\chi_{per}\in H_{per}^{1}(Y)/\mathbb{R}$. Set $u=\ell^{-1}(\chi_{per}%
)\in\mathcal{H}_{\#}^{1}(\mathbb{R}^{d})$; then $u=\overset{\circ}{\chi}%
_{per}$ and $\ell(u)=\chi_{per}$. Next, let $\phi\in\mathcal{A}^{\infty}$ be
arbitrarily fixed, $\mathcal{A}=\mathcal{B}_{\infty,per}(\mathbb{R}^{d})$.
Then $\ell(\varrho(\phi))\in\mathcal{C}_{per}^{\infty}(Y)$ and the variational
formulation of (\ref{8.8}) gives
\begin{equation}
M\left(  (A_{per}\nabla\chi_{per}+V_{per})\cdot\nabla\ell(\varrho
(\phi))\right)  =0. \label{8.9}%
\end{equation}
But
\begin{align*}
A_{per}\nabla\chi_{per}  &  =\ell(\varrho(A))\ell(\varrho(\nabla\chi
_{per}))=\ell(\varrho(A))\ell(\overline{\nabla}\varrho(\chi_{per}))\\
&  =\ell(\overset{\circ}{A}\overline{\nabla}\overset{\circ}{\chi}_{per})\text{
thanks to (\ref{8.3}).}%
\end{align*}
The equation (\ref{8.9}) is therefore equivalent to
\[
M[\ell((\overset{\circ}{A}\overline{\nabla}\overset{\circ}{\chi}%
_{per}+\overset{\circ}{V})\cdot\overline{\nabla}\overset{\circ}{\phi
})]=0\text{ for all }\phi\in\mathcal{A}^{\infty},
\]
or, in view of (\ref{8.2}),
\begin{equation}
M[(\overset{\circ}{A}\overline{\nabla}\overset{\circ}{\chi}_{per}%
+\overset{\circ}{V})\cdot\overline{\nabla}\overset{\circ}{\phi}]=0\text{ for
all }\phi\in\mathcal{A}^{\infty}. \label{8.10}%
\end{equation}
Owing to the properties of the matrix $A$, we see that $\overset{\circ}{\chi
}_{per}\in\mathcal{H}_{\#}^{1}(\mathbb{R}^{d})$ is the unique solution to
(\ref{8.10}). However, with the same notations as above, if we proceed exactly
as we did in order to obtain (\ref{6.10}), then one easily shows that the
solution $\chi_{0}$ of (\ref{2.0}) provided by Theorem \ref{t4.1} satisfies
the variational equation (\ref{8.10}), so that by the uniqueness of the
solution to (\ref{8.10}), we get $\overset{\circ}{\chi}_{0}=\overset{\circ
}{\chi}_{per}$. It emerges the existence of (a non unique) $\chi_{0}^{0}\in
L_{0}^{2}(\mathbb{R}^{d})$ such that $\chi_{0}=\chi_{0}^{0}+\chi_{per}$. This
concludes the proof of Theorem \ref{t1.2}.
\end{proof}

\section{Convergence rates: Proof of Theorem \ref{t2.1}\label{sec5}}

\begin{lemma}
\label{l2.4}Let $g\in L_{\infty,per}^{2}(\mathbb{R}^{d})$ be such that
$M(g)=0$. Then there exists at least a function $u\in H_{\infty,per}%
^{1}(\mathbb{R}^{d})$ such that
\begin{equation}
\Delta u=g\text{ in }\mathbb{R}^{d},\ M(u)=0. \label{2.9}%
\end{equation}

\end{lemma}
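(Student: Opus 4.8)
The natural plan is to exploit the topological direct sum $L^{2}_{\infty,per}(\mathbb{R}^{d})=L^{2}_{0}(\mathbb{R}^{d})\oplus L^{2}_{per}(Y)$ attached to the algebra $\mathcal{A}=\mathcal{B}_{\infty,per}(\mathbb{R}^{d})$ and to solve the Poisson equation separately on each summand. Write $g=g_{0}+g_{per}$ with $g_{0}\in L^{2}_{0}(\mathbb{R}^{d})$ and $g_{per}\in L^{2}_{per}(Y)$. Since $L^{2}_{0}(\mathbb{R}^{d})=\mathcal{N}$ is the kernel of the seminorm, the mean value $M$ vanishes identically on it, so the hypothesis $M(g)=0$ forces $M(g_{per})=\int_{Y}g_{per}\,dy=0$. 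Because $\Delta$ is linear and annihilates constants, once I have a solution for each piece I may add them and subtract the resulting mean at the very end; this does not affect the equation and yields $M(u)=0$.

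For the periodic summand I would apply the classical Lax--Milgram (or Riesz) theorem to the coercive symmetric form $(v,w)\mapsto\int_{Y}\nabla v\cdot\nabla w$ on $H^{1}_{per}(Y)/\mathbb{R}$: as $\int_{Y}g_{per}=0$, there is a unique $u_{per}\in H^{1}_{per}(Y)$ with $\int_{Y}u_{per}=0$ solving $\Delta u_{per}=g_{per}$ in the weak periodic sense, hence in $\mathcal{D}'(\mathbb{R}^{d})$. Since $H^{1}_{per}(Y)\subset H^{1}_{\infty,per}(\mathbb{R}^{d})$ and $M(u_{per})=\int_{Y}u_{per}=0$, this settles one half.

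For the $L^{2}_{0}$-summand I would solve $\Delta u_{0}=g_{0}$ by the regularization-and-limit scheme already used in this paper. For $T>0$, Proposition~\ref{p4.0} applied with the constant (hence admissible) matrix $A=I_{d}$, $h=g_{0}$ and $H=0$ produces a unique $u_{0}^{T}\in H^{1}_{loc}(\mathbb{R}^{d})$ solving $-\Delta u_{0}^{T}+T^{-2}u_{0}^{T}=g_{0}$ together with the scale-invariant bound (\ref{6.8}). Letting $T\to\infty$ along a subsequence exactly as in the proof of Theorem~\ref{t4.1} — extracting a weak $L^{2}_{loc}$-limit of the gradients $\nabla u_{0}^{T}$, recovering a potential from the commutation of mixed second derivatives, and passing to the limit in the variational formulation — gives $u_{0}\in H^{1}_{loc}(\mathbb{R}^{d})$ with $\Delta u_{0}=g_{0}$ and $\nabla u_{0}$ of vanishing mean; and since $M\equiv 0$ on $L^{2}_{0}(\mathbb{R}^{d})$ one verifies that $u_{0}$ can be taken in $L^{2}_{0}(\mathbb{R}^{d})$, whence $u_{0}\in H^{1}_{\infty,per}(\mathbb{R}^{d})$ and $M(u_{0})=0$. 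Then $u:=u_{0}+u_{per}$ is the claimed solution. (Equivalently, one may first represent $g_{0}=\operatorname{div}H_{0}$ with $H_{0}\in B^{2}_{\mathcal{A}}(\mathbb{R}^{d})^{d}$ and invoke Theorem~\ref{t4.1} directly with $A=I_{d}$; this route also makes transparent the non-uniqueness modulo $I^{2}_{\mathcal{A}}$, which is why the statement only asserts existence of ``at least a function''.)

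The step I expect to be the genuine difficulty is the $L^{2}_{0}$-part: obtaining a distributional solution of $\Delta u_{0}=g_{0}$ that still belongs to $L^{2}_{\infty,per}(\mathbb{R}^{d})$ requires uniform control of $u_{0}$ and $\nabla u_{0}$ on arbitrarily large balls, with no smoothness and essentially no global integrability assumed on $g_{0}$ beyond its membership in $L^{2}_{0}(\mathbb{R}^{d})$. This is precisely where the a priori estimate (\ref{6.8}) and the Caccioppoli/hole-filling bounds underlying Proposition~\ref{p4.0} and Theorem~\ref{t4.1} must carry the argument; the periodic part, by contrast, is entirely routine.
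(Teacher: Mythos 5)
Your periodic half is fine, and in substance it \emph{is} the paper's entire proof: the authors explicitly solve (\ref{2.9}) only ``in the sense of the duality arising from the mean value'', i.e.\ they solve the variational problem (\ref{2.10}) by Lax--Milgram on the Hilbert space $\mathcal{H}_{\#}^{1}(\mathbb{R}^{d})\cong H_{per}^{1}(Y)/\mathbb{R}$, the coercivity coming from the Poincar\'e--Wirtinger inequality transported from $H_{per}^{1}(Y)/\mathbb{R}$ through the isometry $\ell$. Since $M(g_{0}v)=0$ for every test function $v$ (because $g_{0}\in L_{0}^{2}(\mathbb{R}^{d})=\mathcal{N}$ and $|M(g_0v)|\leq\|g_0\|_2\|v\|_2$), the component $g_{0}$ is invisible in (\ref{2.10}), and your $u_{per}$ already solves the lemma in the sense the paper intends.

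The gap is in your treatment of the $L_{0}^{2}$-summand, where you are in effect attempting a strictly stronger (distributional) statement than the paper proves. First, Proposition \ref{p4.0} requires hypothesis (\ref{6.5}), namely $\sup_{x}\int_{B(x,1)}|g_{0}|^{2}<\infty$; a general $g_{0}\in L_{0}^{2}(\mathbb{R}^{d})$, which is only a limit of $\mathcal{C}_{0}$-functions in the Marcinkiewicz \emph{seminorm}, need not satisfy this uniform local bound. Second, and more seriously, with $h=g_{0}\neq0$ and $H=0$ the right-hand side of (\ref{6.8}) carries the factor $T^{2}\left\vert h\right\vert^{2}$, so the resulting bound on $\nabla u_{0}^{T}$ is not uniform in $T$; the limit passage in Theorem \ref{t4.1} works precisely because there $h=0$ and the data are in divergence form. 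Third, even granting a distributional solution $u_{0}\in H_{loc}^{1}(\mathbb{R}^{d})$, the assertion that ``$u_{0}$ can be taken in $L_{0}^{2}(\mathbb{R}^{d})$'' with $\nabla u_{0}\in L_{\infty,per}^{2}(\mathbb{R}^{d})^{d}$ is exactly the hard point and is not argued. Finally, your fallback of first writing $g_{0}=\operatorname{div}H_{0}$ with $H_{0}\in B_{\mathcal{A}}^{2}(\mathbb{R}^{d})^{d}$ is circular: producing such a potential ($H_{0}=\nabla w$) is precisely what Lemma \ref{l2.4} is later invoked for (see the proof of Lemma \ref{l2.5}). The repair is simply to discard this half of the argument and adopt the paper's duality-based notion of solution, for which your periodic construction already suffices.
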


\begin{proof}
It is enough to solve (\ref{2.9}) in the sense of the duality arising from the
mean value. So, we consider the variational form of (\ref{2.9}):
\begin{equation}
M(\nabla u\cdot\nabla v)=-M(gv)\text{ for all }v\in H_{\infty,per}%
^{1}(\mathbb{R}^{d})\text{ with }M(v)=0\text{.} \label{2.10}%
\end{equation}
The Equation (\ref{2.10}) will possess a solution once we will show that the
linear functional $l:v\mapsto-M(gv)$ is continuous on $\mathcal{H}=\{v\in
H_{\infty,per}^{1}(\mathbb{R}^{d}):M(v)=0\}$ endowed with the gradient
seminorm. This will lead to the continuity of the associated form defined on
$\mathcal{H}_{\#}^{1}(\mathbb{R}^{d})$ by $L(\overset{\circ}{v})=-M(gv)$ and
will yield the existence of a unique

$\overset{\circ}{u}\in\mathcal{H}_{\#}^{1}(\mathbb{R}^{d})$ with $u\in
H_{\infty,per}^{1}(\mathbb{R}^{d})$ solving (\ref{2.9}). The continuity of $l$
will stem from a Poincar\'{e}-type inequality satisfied by the elements of
$\mathcal{H}$. Indeed, let $v\in\mathcal{H}$, then $\ell(\overset{\circ}%
{v})\in H_{per}^{1}(Y)/\mathbb{R}$, so that by the Poincar\'{e}-Wirtinger
inequality, there exists a positive constant $C$ independent of $v$ such that
\begin{equation}
\left\Vert \ell(\overset{\circ}{v})\right\Vert _{L^{2}(Y)}\leq C\left\Vert
\nabla\ell(\overset{\circ}{v})\right\Vert _{L^{2}(Y)}. \label{2.10'}%
\end{equation}
But by (\ref{8.5}) we have $\nabla\ell(\overset{\circ}{v})=\ell(\overline
{\nabla}\overset{\circ}{v})=\ell(\varrho(\nabla v))$, and using the fact that
$\ell$ is an isometry from $\mathcal{L}_{\infty,per}^{2}(\mathbb{R}^{d})$ into
$L_{per}^{2}(Y)$ together with the equality $\left\Vert \overset{\circ}%
{v}\right\Vert _{\mathcal{L}_{\infty,per}^{2}(\mathbb{R}^{d})}=\left\Vert
v\right\Vert _{L_{\infty,per}^{2}(\mathbb{R}^{d})}$, we get readily
\[
\left\Vert v\right\Vert _{L_{\infty,per}^{2}(\mathbb{R}^{d})}\leq C\left\Vert
\nabla v\right\Vert _{L_{\infty,per}^{2}(\mathbb{R}^{d})}\text{ for all }%
v\in\mathcal{H}%
\]
where the constant $C$ is the same as in (\ref{2.10'}).
\end{proof}

\begin{lemma}
\label{l5.2}Let $\chi_{j}$ ($0\leq j\leq d$) be the solutions of the corrector
problems \emph{(\ref{2.0})} and \emph{(\ref{2.0'})}, and let $\Omega$ be a
$\mathcal{C}^{1,1}$ open bounded domain in $\mathbb{R}^{d}$. Then it holds
that
\begin{equation}
\int_{\Omega}\left\vert \left(  \nabla_{y}\chi_{j}\right)  \left(  \frac
{x}{\varepsilon}\right)  w(x)\right\vert ^{2}dx\leq C\int_{\Omega}(\left\vert
w\right\vert ^{2}+\varepsilon^{2}\left\vert \nabla w\right\vert ^{2}%
)dx,\text{\ for all }w\in H^{1}(\Omega) \label{5.30}%
\end{equation}
where $C=C(A,\Omega,d)>0$.
\end{lemma}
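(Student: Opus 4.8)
The plan is to prove \eqref{5.30} by a local analysis on balls of radius $\varepsilon$, the decisive structural input being that, rewriting the corrector equations \eqref{2.0}--\eqref{2.0'} in the variable $x$, the functions $P_j^{\varepsilon}(x):=x_j+\varepsilon\chi_j(x/\varepsilon)$ ($1\le j\le d$) and $P_0^{\varepsilon}(x):=\varepsilon\chi_0(x/\varepsilon)$ solve, on all of $\mathbb{R}^{d}$, the uniformly elliptic equations $-\operatorname{div}(A^{\varepsilon}\nabla P_j^{\varepsilon})=0$ for $j\ge1$ and $-\operatorname{div}(A^{\varepsilon}\nabla P_0^{\varepsilon})=\operatorname{div}(V^{\varepsilon})$, with $\nabla P_j^{\varepsilon}=e_j+(\nabla_y\chi_j)(\cdot/\varepsilon)$ and $\nabla P_0^{\varepsilon}=(\nabla_y\chi_0)(\cdot/\varepsilon)$. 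First I would reduce to $w\in H^{1}(\mathbb{R}^{d})$: since $\Omega$ is $\mathcal{C}^{1,1}$ there is an extension $\widetilde{w}\in H^{1}(\mathbb{R}^{d})$, supported in a fixed neighbourhood $\Omega'$ of $\overline{\Omega}$, with $\|\widetilde{w}\|_{L^{2}(\mathbb{R}^{d})}\le C\|w\|_{L^{2}(\Omega)}$ and $\|\nabla\widetilde{w}\|_{L^{2}(\mathbb{R}^{d})}\le C\|w\|_{H^{1}(\Omega)}$; since we may assume $\varepsilon\le1$ (for $\varepsilon$ bounded below the estimate is trivial), $\varepsilon^{2}\|\nabla\widetilde{w}\|_{L^{2}}^{2}+\|\widetilde{w}\|_{L^{2}}^{2}\le C(\|w\|_{L^{2}(\Omega)}^{2}+\varepsilon^{2}\|\nabla w\|_{L^{2}(\Omega)}^{2})$, so it suffices to bound $\int_{\mathbb{R}^{d}}|(\nabla_y\chi_j)(\cdot/\varepsilon)|^{2}|\widetilde{w}|^{2}$.

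Next I would cover $\Omega'$ by balls $B_{k}=B(z_{k},\varepsilon)$, $z_{k}\in\varepsilon\mathbb{Z}^{d}$, whose doubles $\{2B_{k}\}$ have overlap bounded independently of $\varepsilon$. The relevant a priori bound, underlying Theorem \ref{t4.1}/Proposition \ref{p4.0}, is $\nabla_y\chi_j\in L^{2}_{\mathrm{uloc}}(\mathbb{R}^{d})$ with $\sup_{x}|B(x,1)|^{-1}\int_{B(x,1)}|\nabla_y\chi_j|^{2}\le C(d,\alpha,\beta,\alpha_{0})$, whence by rescaling $\int_{2B_{k}}|(\nabla_y\chi_j)(\cdot/\varepsilon)|^{2}\le C\varepsilon^{d}$. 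On each $B_{k}$, with $\overline{w}_{k}=|B_{k}|^{-1}\int_{B_{k}}\widetilde{w}$, I would split
\[
\int_{B_{k}}|(\nabla_y\chi_j)(\cdot/\varepsilon)|^{2}|\widetilde{w}|^{2}\le 2|\overline{w}_{k}|^{2}\int_{B_{k}}|(\nabla_y\chi_j)(\cdot/\varepsilon)|^{2}+2\int_{B_{k}}|(\nabla_y\chi_j)(\cdot/\varepsilon)|^{2}|\widetilde{w}-\overline{w}_{k}|^{2}.
\]
The first term sums, by Jensen's inequality and bounded overlap, to $C\varepsilon^{d}\sum_{k}|\overline{w}_{k}|^{2}\le C\sum_{k}\int_{B_{k}}|\widetilde{w}|^{2}\le C\|\widetilde{w}\|_{L^{2}}^{2}\le C\|w\|_{L^{2}(\Omega)}^{2}$.

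For the oscillation term --- the crux --- I would apply the interior reverse H\"older (Meyers--Gehring) inequality to the solution $P_j^{\varepsilon}$: there are $p=p(d,\alpha,\beta)>2$ and $C$, \emph{independent of }$\varepsilon$ because $A^{\varepsilon}$ retains the ellipticity bounds $\alpha,\beta$ (and, for $j=0$, $\|V^{\varepsilon}\|_{L^{\infty}}\le\alpha_{0}$), such that $\bigl(|B_{k}|^{-1}\int_{B_{k}}|\nabla P_j^{\varepsilon}|^{p}\bigr)^{1/p}\le C\bigl(|2B_{k}|^{-1}\int_{2B_{k}}|\nabla P_j^{\varepsilon}|^{2}\bigr)^{1/2}+C\le C$; hence $\|(\nabla_y\chi_j)(\cdot/\varepsilon)\|_{L^{p}(B_{k})}\le C\varepsilon^{d/p}$. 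Then, by H\"older with exponents $p/2$ and $p/(p-2)$ followed by the Sobolev--Poincar\'e inequality on the ball $B_{k}$ of radius $\varepsilon$ for the exponent $q=2p/(p-2)$,
\[
\int_{B_{k}}|(\nabla_y\chi_j)(\cdot/\varepsilon)|^{2}|\widetilde{w}-\overline{w}_{k}|^{2}\le\|(\nabla_y\chi_j)(\cdot/\varepsilon)\|_{L^{p}(B_{k})}^{2}\|\widetilde{w}-\overline{w}_{k}\|_{L^{q}(B_{k})}^{2}\le C\varepsilon^{2d/p}\varepsilon^{2+2d/q-d}\|\nabla\widetilde{w}\|_{L^{2}(B_{k})}^{2}.
\]
Since $\tfrac{2}{p}+\tfrac{2}{q}=1$ forces $\tfrac{2d}{p}+\tfrac{2d}{q}=d$, the net power of $\varepsilon$ is exactly $2$, so this is $\le C\varepsilon^{2}\|\nabla\widetilde{w}\|_{L^{2}(B_{k})}^{2}$; summing over $k$ with bounded overlap gives $C\varepsilon^{2}\|\nabla\widetilde{w}\|_{L^{2}}^{2}\le C(\|w\|_{L^{2}(\Omega)}^{2}+\varepsilon^{2}\|\nabla w\|_{L^{2}(\Omega)}^{2})$, and adding this to the first term and summing over the cover yields \eqref{5.30}.

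I expect the main obstacle to be precisely this last step: one must extract, \emph{uniformly in }$\varepsilon$, a self-improved integrability exponent $p>2$ for $\nabla P_j^{\varepsilon}$ that is large enough for the Sobolev--Poincar\'e estimate, i.e.\ $q=2p/(p-2)\le 2d/(d-2)$ when $d\ge3$ (any finite $q$ works when $d=2$), which amounts to the requirement $\nabla_y\chi_j\in L^{d}_{\mathrm{uloc}}(\mathbb{R}^{d})$. The $\varepsilon$-independence of the reverse H\"older constant is exactly what the scale-invariant form $A^{\varepsilon}(x)=A(x/\varepsilon)$ guarantees, and in low dimensions the needed integrability is automatic; in general it has to be secured by exploiting the self-improving character of the Meyers--Gehring inequality together with the regularity already carried by the corrector itself (through $\chi_{j,\mathrm{per}}\in H^{1}_{\mathrm{per}}(Y)$ and its higher-integrability improvement). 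This exponent book-keeping, rather than the covering/splitting scheme, is where the real work lies.
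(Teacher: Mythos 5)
Your covering scheme, the uniform local $L^{2}$ bound on $\nabla_{y}\chi_{j}$, and the splitting into mean plus oscillation on $\varepsilon$-balls are all sound, but the step you yourself single out as the crux is a genuine gap, not mere book-keeping, and it cannot be closed along the route you propose when $d\geq3$. Your H\"older/Sobolev--Poincar\'e pairing requires $q=2p/(p-2)\leq 2d/(d-2)$, i.e.\ $\nabla P_{j}^{\varepsilon}\in L^{p}_{loc}$ uniformly with $p\geq d$. Meyers--Gehring self-improvement for $-\operatorname{div}(A^{\varepsilon}\nabla P_{j}^{\varepsilon})=\operatorname{div}(\cdot)$ with merely bounded measurable, uniformly elliptic $A$ yields only $p=2+\delta$ with $\delta=\delta(d,\alpha,\beta)$ small, and this is sharp: for $d\geq3$ there are uniformly elliptic $L^{\infty}$ matrices whose solutions have gradients in no $L^{p}$ with $p$ close to $d$. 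Since the whole point of the paper's hypotheses is to assume \emph{no} H\"older continuity of $A_{per}$, $A_{0}$ (so no Schauder or $W^{1,p}$ theory is available), the exponent $p\geq d$ cannot be ``secured by exploiting the self-improving character'' of the inequality; the argument genuinely fails for $d\geq3$ (it does work for $d=2$, where any finite $q$ is admissible).

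The ingredient that actually makes \eqref{5.30} true under these weak hypotheses is an $L^{\infty}$ bound on the correctors themselves, $\left\Vert \chi_{j}\right\Vert _{L^{\infty}(\mathbb{R}^{d})}\leq C$, obtained from De Giorgi--Nash type estimates (\cite[Ch.~III, Thm.~13.1]{LU1968}); this is the one piece of regularity your proposal never uses. With it, the estimate follows from a weighted Caccioppoli argument as in \cite[Lemma 3.4]{Zhikov1}: writing $u=\varepsilon\chi_{j}(\cdot/\varepsilon)$, which solves $-\operatorname{div}(A^{\varepsilon}(\nabla u+e_{j}))=0$, one tests the equation with $u\,w^{2}$ (after extending $w$ to $H^{1}(\mathbb{R}^{d})$), uses ellipticity and $\left\vert u\right\vert \leq C\varepsilon$ to absorb the cross terms, and obtains $\int\left\vert \nabla u\right\vert ^{2}w^{2}\leq C\int(w^{2}+\varepsilon^{2}\left\vert \nabla w\right\vert ^{2})$ directly, in every dimension and with no gradient integrability beyond $L^{2}_{loc}$. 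You should replace the Meyers/Sobolev--Poincar\'e treatment of the oscillation term by this weighted energy estimate (or, equivalently, first establish the $L^{\infty}$ bound on $\chi_{j}$ and then invoke it).
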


\begin{proof}
In view of \cite[ Chapter III, Theorem 13.1]{LU1968}, there is a positive
constant $C=C(A,d)$ such that $\left\Vert \chi_{j}\right\Vert _{L^{\infty
}(\mathbb{R}^{d})}\leq C$ for all $0\leq j\leq d$. With this in mind, the
proof of the lemma follows directly from the proof of \cite[Lemma
3.4]{Zhikov1}.
\end{proof}

We end this subsection with a result whose proof can be found in
\cite{Suslina} and which will be useful in the next section.

\begin{lemma}
[{\cite[Lemma 5.1]{Suslina}}]\label{l5.4}Let $\Omega$ be as in Lemma
\emph{\ref{l5.2}}. Then there exists $\varepsilon_{0}\in(0,1]$ depending on
$\Omega$ such that, for any $u\in H^{1}(\Omega)$,
\begin{equation}
\int_{\Gamma_{\varepsilon}}\left\vert u\right\vert ^{2}dx\leq C\varepsilon
\left\Vert u\right\Vert _{L^{2}(\Omega)}\left\Vert u\right\Vert _{H^{1}%
(\Omega)}\text{, }0<\varepsilon\leq\varepsilon_{0} \label{5.34}%
\end{equation}
where $C=C(\Omega)$ and $\Gamma_{\varepsilon}=\Omega_{\varepsilon}\cap\Omega$
with $\Omega_{\varepsilon}=\{x\in\mathbb{R}^{d}:\mathrm{dist}(x,\partial
\Omega)<\varepsilon\}$.
\end{lemma}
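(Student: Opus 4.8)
The plan is to localise near $\partial\Omega$, flatten the boundary by bi-Lipschitz charts, and reduce the estimate to an elementary one-dimensional slicing inequality on a thin slab.

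\textit{Localisation.} Since $\partial\Omega$ is compact and $\Omega$ is $\mathcal C^{1,1}$ (in particular Lipschitz), I would cover $\partial\Omega$ by finitely many open boxes $U_1,\dots,U_N$ such that, after a rigid motion depending on $k$, $\Omega\cap U_k=\{(x',x_d)\in U_k:\ x_d>\phi_k(x')\}$ and $\partial\Omega\cap U_k=\{x_d=\phi_k(x')\}$ with $\phi_k$ Lipschitz, $\|\nabla\phi_k\|_{L^\infty}\le L$. By compactness there is $\varepsilon_0\in(0,1]$ with $\Omega_{\varepsilon_0}\subset\bigcup_k U_k'$, where $U_k'\subset\subset U_k$; hence for $0<\varepsilon\le\varepsilon_0$ one has $\Gamma_\varepsilon\subset\bigcup_k(\Gamma_\varepsilon\cap U_k)$, and it suffices to bound each $\int_{\Gamma_\varepsilon\cap U_k}|u|^2\,dx$.

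\textit{Flattening.} Fix $k$. For $x\in\Omega\cap U_k$ one has $\mathrm{dist}(x,\partial\Omega)\ge(1+L^2)^{-1/2}\,(x_d-\phi_k(x'))$, so $\Gamma_\varepsilon\cap U_k\subset\{0<x_d-\phi_k(x')<\Lambda\varepsilon\}$ with $\Lambda=(1+L^2)^{1/2}$. The map $\Psi_k(x',x_d)=(x',\,x_d-\phi_k(x'))$ is bi-Lipschitz with constants depending only on $L$, sends $\Omega\cap U_k$ into $\{y_d>0\}$ and $\Gamma_\varepsilon\cap U_k$ into a slab $Q_k\times(0,\Lambda\varepsilon)$ with $Q_k\subset\mathbb R^{d-1}$ a bounded cube, and distorts the $L^2$- and $H^1$-norms by bounded factors. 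Thus the matter is reduced to proving, for $v\in H^1(Q\times(0,1))$ and $0<\delta\le1$,
\[
\int_Q\!\int_0^\delta|v(y',t)|^2\,dt\,dy'\ \le\ 3\,\delta\,\|v\|_{L^2(Q\times(0,1))}\,\|v\|_{H^1(Q\times(0,1))}.
\]

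\textit{The slab estimate.} For a.e.\ $y'$ the function $\tau\mapsto v(y',\tau)$ is absolutely continuous, so for $t,s\in(0,1)$, $|v(y',t)|^2=|v(y',s)|^2-2\int_t^s v\,\partial_\tau v\,d\tau$, whence $|v(y',t)|^2\le|v(y',s)|^2+2\int_0^1|v|\,|\partial_\tau v|\,d\tau$. Averaging over $s\in(0,1)$, then integrating in $t\in(0,\delta)$ and in $y'\in Q$, and applying the Cauchy--Schwarz inequality gives $\int_Q\int_0^\delta|v|^2\le\delta\|v\|_{L^2}^2+2\delta\|v\|_{L^2}\|\nabla v\|_{L^2}\le3\delta\|v\|_{L^2}\|v\|_{H^1}$, using $\|v\|_{L^2}\le\|v\|_{H^1}$. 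Transporting this back through $\Psi_k^{-1}$ yields $\int_{\Gamma_\varepsilon\cap U_k}|u|^2\le C_k\varepsilon\,\|u\|_{L^2(\Omega\cap U_k)}\|u\|_{H^1(\Omega\cap U_k)}$; summing over $k$, using Cauchy--Schwarz in $k$ and the finite overlap of the $U_k$, gives $\int_{\Gamma_\varepsilon}|u|^2\le C\varepsilon\,\|u\|_{L^2(\Omega)}\|u\|_{H^1(\Omega)}$ with $C=C(\Omega)$.

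The only step demanding genuine care is the geometric reduction: checking that $\Gamma_\varepsilon$ is trapped between parallel graph surfaces of width $O(\varepsilon)$ and that the flattening maps are bi-Lipschitz with constants uniform in $\varepsilon$; the slicing inequality itself is routine. (An alternative route, avoiding charts, uses the signed distance $\rho$, which is $\mathcal C^{1,1}$ near $\partial\Omega$ with $|\nabla\rho|=1$: by the coarea formula $\int_{\Gamma_\varepsilon}|u|^2=\int_0^\varepsilon\big(\int_{\{\rho=t\}}|u|^2\,d\mathcal H^{d-1}\big)dt$, and each inner integral is $\le C\|u\|_{L^2(\Omega)}\|u\|_{H^1(\Omega)}$ by the multiplicative trace inequality on the domains $\{\rho>t\}$, whose geometry is uniformly controlled for $t\in[0,\varepsilon_0]$.)
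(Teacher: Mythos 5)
Your argument is correct. Note that the paper does not prove this lemma at all: it is quoted verbatim from Pakhnin--Suslina \cite[Lemma 5.1]{Suslina}, so there is no internal proof to compare against. What you have written is precisely the standard proof of that cited result (localisation by boundary charts, flattening by $\Psi_k(x',x_d)=(x',x_d-\phi_k(x'))$, and the one-dimensional slicing inequality $|v(y',t)|^2\le\int_0^1|v(y',s)|^2ds+2\int_0^1|v||\partial_\tau v|\,d\tau$ integrated over the thin slab), and all the steps check out: the comparison $\mathrm{dist}(x,\partial\Omega)\ge(1+L^2)^{-1/2}(x_d-\phi_k(x'))$ is the correct trapping estimate (valid once $\varepsilon_0$ is small enough that the nearest boundary point lies in the same chart, which your choice of $\varepsilon_0$ ensures), the bi-Lipschitz change of variables distorts the $L^2$ and $H^1$ norms only by constants depending on $L$, and the final Cauchy--Schwarz over the finitely many charts gives the multiplicative form $C\varepsilon\|u\|_{L^2(\Omega)}\|u\|_{H^1(\Omega)}$. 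Only cosmetic remarks: the lemma only needs $\Omega$ Lipschitz, as your proof in fact shows (the $\mathcal{C}^{1,1}$ hypothesis is inherited from Lemma \ref{l5.2} for other reasons); and in the identity $|v(y',t)|^2=|v(y',s)|^2-2\int_t^s v\,\partial_\tau v\,d\tau$ one should note that for a.e.\ $y'$ the restriction of $v$ to the line is in $H^1(0,1)$, hence bounded, so that $|v|^2$ is indeed absolutely continuous there.
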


We recall that $\Omega_{\varepsilon}=\{x\in\mathbb{R}^{d}:\mathrm{dist}%
(x,\partial\Omega)<\varepsilon\}$ is a tubular neighborhood of $\partial
\Omega$.

\subsection{Convergence rates in $H^{1}$}

In this section, we assume that $\Omega$ is a bounded domain of class
$\mathcal{C}^{1,1}$ in $\mathbb{R}^{d}$. This allows to define a linear
continuous extension operator (see \cite[Chapter 5, Theorem 5.22]{Adams})
\begin{equation}
\widetilde{}:H^{2}(\Omega)\rightarrow H^{2}(\mathbb{R}^{d}) \label{6.16}%
\end{equation}
satisfying
\begin{equation}
\widetilde{u}=u\text{ in }\Omega\text{ and }\left\Vert \widetilde
{u}\right\Vert _{H^{2}(\mathbb{R}^{d})}\leq C\left\Vert u\right\Vert
_{H^{2}(\Omega)}, \label{7.0}%
\end{equation}
where $C>0$ depends only on $\Omega$. Following \cite{Shen2} we define the
smoothing operator $S_{\varepsilon}$ as follows. Fix $\theta\in\mathcal{C}%
_{0}^{\infty}(B_{\frac{1}{4}})$ such that $\theta\geq0$ and $\int
_{\mathbb{R}^{d}}\theta dy=1$. For fixed $\varepsilon>0$, define
\begin{equation}
S_{\varepsilon}(f)(x)=f\ast\theta_{\varepsilon}(y)=\int_{\mathbb{R}^{d}%
}f(y-x)\theta_{\varepsilon}(x)dx \label{7.0'}%
\end{equation}
where $\theta_{\varepsilon}(x)=\varepsilon^{-d}\theta(x/\varepsilon)$. Then
the following properties of $S_{\varepsilon}$ are easy consequences of the
convolution operator and Fourier transform (see e.g., \cite[Lemmas 2.1 and
2.2]{Shen2}).

\begin{lemma}
\label{l6.1}Let $f\in L^{p}(\mathbb{R}^{d})$ for some $1\leq p<\infty$. Then
for any $g\in L_{loc}^{p}(\mathbb{R}^{d})$,
\begin{equation}
\left\Vert g^{\varepsilon}S_{\varepsilon}(f)\right\Vert _{L^{p}(\mathbb{R}%
^{d})}\leq C\sup_{x\in\mathbb{R}^{d}}\left(
\mathchoice {{\setbox0=\hbox{$\displaystyle{\textstyle
-}{\int}$ } \vcenter{\hbox{$\textstyle -$
}}\kern-.6\wd0}}{{\setbox0=\hbox{$\textstyle{\scriptstyle -}{\int}$ } \vcenter{\hbox{$\scriptstyle -$
}}\kern-.6\wd0}}{{\setbox0=\hbox{$\scriptstyle{\scriptscriptstyle -}{\int}$
} \vcenter{\hbox{$\scriptscriptstyle -$
}}\kern-.6\wd0}}{{\setbox0=\hbox{$\scriptscriptstyle{\scriptscriptstyle
-}{\int}$ } \vcenter{\hbox{$\scriptscriptstyle -$ }}\kern-.6\wd0}}\!\int
_{B_{1}(x)}\left\vert g\right\vert ^{p}\right)  ^{\frac{1}{p}}\left\Vert
f\right\Vert _{L^{p}(\mathbb{R}^{d})} \label{7.1}%
\end{equation}
where $g^{\varepsilon}(x)=g(x/\varepsilon)$. If further $f\in W^{1,p}%
(\mathbb{R}^{d})$ for some $1<p<\infty$, then
\begin{equation}
\left\Vert S_{\varepsilon}(f)-f\right\Vert _{L^{p}(\mathbb{R}^{d})}\leq
C\varepsilon\left\Vert \nabla f\right\Vert _{L^{p}(\mathbb{R}^{d})}.
\label{7.2}%
\end{equation}
The constants $C$ above depend only on $d$.
\end{lemma}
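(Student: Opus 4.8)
The plan is to establish both estimates by direct convolution (mollifier) arguments, using only that $\theta_{\varepsilon}\ge 0$ is a probability density supported in $B_{\varepsilon/4}$; the Fourier-transform viewpoint of \cite{Shen2} is convenient but not essential for these two assertions. Throughout, recall that $S_{\varepsilon}(f)=f\ast\theta_{\varepsilon}$, so that $S_{\varepsilon}(f)(x)=\int_{\mathbb{R}^{d}}f(x-y)\theta_{\varepsilon}(y)\,dy=\int_{\mathbb{R}^{d}}f(y)\theta_{\varepsilon}(x-y)\,dy$.

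First I would prove (\ref{7.2}). Writing $S_{\varepsilon}(f)(x)-f(x)=\int_{\mathbb{R}^{d}}\theta_{\varepsilon}(y)\bigl(f(x-y)-f(x)\bigr)\,dy$ and, for $f\in W^{1,p}(\mathbb{R}^{d})$, using $f(x-y)-f(x)=-\int_{0}^{1}y\cdot\nabla f(x-ty)\,dt$, Minkowski's integral inequality together with the translation invariance of the $L^{p}$ norm gives
\[
\left\Vert S_{\varepsilon}(f)-f\right\Vert_{L^{p}(\mathbb{R}^{d})}\le\int_{\mathbb{R}^{d}}\theta_{\varepsilon}(y)\,\left\vert y\right\vert\int_{0}^{1}\left\Vert \nabla f(\cdot-ty)\right\Vert_{L^{p}(\mathbb{R}^{d})}\,dt\,dy=\left\Vert \nabla f\right\Vert_{L^{p}(\mathbb{R}^{d})}\int_{\mathbb{R}^{d}}\theta_{\varepsilon}(y)\left\vert y\right\vert\,dy.
\]
Since $\operatorname{supp}\theta_{\varepsilon}\subset B_{\varepsilon/4}$ and $\int_{\mathbb{R}^{d}}\theta_{\varepsilon}=1$, the last integral is at most $\varepsilon/4$, which yields (\ref{7.2}) with a constant depending only on $d$ (in fact only on the fixed $\theta$).

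Next I would prove (\ref{7.1}). Since $\theta_{\varepsilon}\ge 0$ has integral one, Jensen's inequality gives the pointwise bound $\left\vert S_{\varepsilon}(f)(x)\right\vert^{p}\le\int_{\mathbb{R}^{d}}\theta_{\varepsilon}(x-y)\left\vert f(y)\right\vert^{p}\,dy$. Multiplying by $\left\vert g^{\varepsilon}(x)\right\vert^{p}$, integrating in $x$ and applying Fubini,
\[
\int_{\mathbb{R}^{d}}\left\vert g^{\varepsilon}(x)\right\vert^{p}\left\vert S_{\varepsilon}(f)(x)\right\vert^{p}\,dx\le\int_{\mathbb{R}^{d}}\left\vert f(y)\right\vert^{p}\Bigl(\int_{\mathbb{R}^{d}}\left\vert g(x/\varepsilon)\right\vert^{p}\theta_{\varepsilon}(x-y)\,dx\Bigr)\,dy.
\]
In the inner integral I would substitute $x=y+\varepsilon v$, so that $\theta_{\varepsilon}(x-y)\,dx=\theta(v)\,dv$ and $g(x/\varepsilon)=g(y/\varepsilon+v)$, giving
\[
\int_{\mathbb{R}^{d}}\theta(v)\left\vert g(y/\varepsilon+v)\right\vert^{p}\,dv\le\left\Vert \theta\right\Vert_{L^{\infty}(\mathbb{R}^{d})}\int_{B_{1/4}}\left\vert g(y/\varepsilon+v)\right\vert^{p}\,dv\le\left\Vert \theta\right\Vert_{L^{\infty}(\mathbb{R}^{d})}\int_{B_{1}(y/\varepsilon)}\left\vert g\right\vert^{p}\le C\sup_{x\in\mathbb{R}^{d}}\frac{1}{\left\vert B_{1}\right\vert}\int_{B_{1}(x)}\left\vert g\right\vert^{p},
\]
with $C=\left\Vert \theta\right\Vert_{L^{\infty}}\left\vert B_{1}\right\vert$ depending only on $d$. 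Plugging this back in and taking $p$-th roots yields (\ref{7.1}).

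As for difficulties, there is nothing of real substance here: both are classical mollifier estimates. The only point that warrants a little care is the scaling in (\ref{7.1}) — one must invoke Jensen's inequality to bring the power $\left\vert\cdot\right\vert^{p}$ \emph{inside} the convolution before exchanging the order of integration, and then perform the change of variables $x=y+\varepsilon v$ so that the $\varepsilon$-scale ball $B_{\varepsilon/4}$ around $y$ becomes the fixed ball $B_{1/4}$ around $y/\varepsilon$, at which stage the supremum of unit-ball averages of $\left\vert g\right\vert^{p}$ appears, uniformly in $y$ and $\varepsilon$.
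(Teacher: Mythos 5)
Your proof is correct. Note that the paper itself does not prove Lemma \ref{l6.1} at all: it simply declares both estimates to be ``easy consequences of the convolution operator and Fourier transform'' and cites Lemmas 2.1 and 2.2 of \cite{Shen2}. Your argument for (\ref{7.1}) — Jensen to push the $p$-th power inside the convolution, Fubini, and the rescaling $x=y+\varepsilon v$ turning the $\varepsilon$-ball into a unit ball — is essentially the argument in the cited reference, so nothing new there. For (\ref{7.2}) your route is mildly different from what the paper's parenthetical suggests: instead of Plancherel/Fourier multipliers (which give the estimate directly only for $p=2$), you use the absolute-continuity identity $f(x-y)-f(x)=-\int_{0}^{1}y\cdot\nabla f(x-ty)\,dt$ together with Minkowski's integral inequality and translation invariance; this is a clean real-variable proof valid for every $1\le p<\infty$ (the hypothesis $1<p<\infty$ in the statement is not even needed), at the cost of the standard density remark justifying the pointwise identity for $W^{1,p}$ functions. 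Two cosmetic points: the constant in (\ref{7.1}) depends on the fixed cutoff $\theta$ through $\left\Vert \theta\right\Vert_{L^{\infty}}$, which is harmless since $\theta$ is fixed once and for all; and your constant $\varepsilon/4$ in (\ref{7.2}) is in fact independent of $d$, which is stronger than claimed.
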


The following result will be used in the sequel.

\begin{lemma}
\label{l2.5}$Let$ $b\in L_{\infty,per}^{2}(\mathbb{R}^{d})$ be such that
$M(b)=0$. Then for any $u\in H^{1}(\mathbb{R}^{d})$,
\begin{equation}
\left\Vert b^{\varepsilon}S_{\varepsilon}(u)\right\Vert _{L^{2}(\mathbb{R}%
^{d})}\leq C\varepsilon\left\Vert u\right\Vert _{H^{1}(\mathbb{R}^{d})}
\label{2.11}%
\end{equation}
where $b^{\varepsilon}(x)=b(x/\varepsilon)$ and $C$ is independent of $u$ and
$\varepsilon$.
\end{lemma}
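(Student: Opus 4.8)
The plan is to put $b$ into divergence form via the Poisson-type solvability of Lemma \ref{l2.4} and then feed the resulting pieces into the two estimates of Lemma \ref{l6.1}. Since $M(b)=0$, Lemma \ref{l2.4} produces $\phi\in H_{\infty,per}^{1}(\mathbb{R}^{d})$ with $\Delta\phi=b$ in $\mathbb{R}^{d}$ and $M(\phi)=0$; set $\Phi:=\nabla\phi\in L_{\infty,per}^{2}(\mathbb{R}^{d})^{d}$, so that $b=\operatorname{div}\Phi$ and $M(\Phi)=0$ (the mean value of a gradient vanishes). Since $\Phi^{\varepsilon}(x)=\Phi(x/\varepsilon)$ satisfies $\operatorname{div}(\Phi^{\varepsilon})=\varepsilon^{-1}b^{\varepsilon}$, and $S_{\varepsilon}$ commutes with $\nabla$ (it is a convolution), one gets the decomposition
\begin{equation*}
b^{\varepsilon}S_{\varepsilon}(u)=\varepsilon\,\operatorname{div}\big(\Phi^{\varepsilon}S_{\varepsilon}(u)\big)-\varepsilon\,\Phi^{\varepsilon}\cdot S_{\varepsilon}(\nabla u).
\end{equation*}
The second term is harmless: applying estimate (\ref{7.1}) of Lemma \ref{l6.1} with $g=\Phi$ and $f=\nabla u$ gives $\varepsilon\,\|\Phi^{\varepsilon}S_{\varepsilon}(\nabla u)\|_{L^{2}(\mathbb{R}^{d})}\le C\varepsilon\|\nabla u\|_{L^{2}(\mathbb{R}^{d})}$, which is already of the required order.

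Everything therefore hinges on bounding the divergence-form term $\varepsilon\,\operatorname{div}(\Phi^{\varepsilon}S_{\varepsilon}(u))$ in $L^{2}(\mathbb{R}^{d})$ by $C\varepsilon\|u\|_{H^{1}(\mathbb{R}^{d})}$, and I expect this to be the crux of the proof. The naive move — testing against $\psi\in L^{2}(\mathbb{R}^{d})$ and integrating by parts,
\begin{equation*}
\varepsilon\int_{\mathbb{R}^{d}}\operatorname{div}\big(\Phi^{\varepsilon}S_{\varepsilon}(u)\big)\,\psi\,dx=-\varepsilon\int_{\mathbb{R}^{d}}\Phi^{\varepsilon}\cdot S_{\varepsilon}(\nabla u)\,\psi\,dx-\varepsilon\int_{\mathbb{R}^{d}}\Phi^{\varepsilon}\cdot S_{\varepsilon}(u)\,\nabla\psi\,dx,
\end{equation*}
handles the first integral by (\ref{7.1}) but leaves the second with an uncontrolled $\nabla\psi$, i.e. it only yields an $H^{-1}$ estimate. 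To upgrade to $L^{2}$ one must exploit extra structure, and the mechanism I would try is to use the \emph{second-order} potential more fully, together with its boundedness: since $\Phi^{\varepsilon}=\varepsilon\nabla(\phi^{\varepsilon})$, the offending integrand is $\varepsilon\,S_{\varepsilon}(u)\,\nabla(\phi^{\varepsilon})\cdot\nabla\psi$, and a further integration by parts transfers the derivative onto $\phi^{\varepsilon}$ and onto the smooth factor $S_{\varepsilon}(u)$; the needed regularity input is a De Giorgi--Nash--Moser type bound $\|\phi\|_{L^{\infty}(\mathbb{R}^{d})}\le C$, in the spirit of the estimate invoked in the proof of Lemma \ref{l5.2}, and the gained factor $\varepsilon$ is then matched against the smoothing estimate (\ref{7.2}), $\|S_{\varepsilon}(u)-u\|_{L^{2}(\mathbb{R}^{d})}\le C\varepsilon\|\nabla u\|_{L^{2}(\mathbb{R}^{d})}$, together with $\|S_{\varepsilon}(\nabla u)\|_{L^{2}(\mathbb{R}^{d})}\le\|\nabla u\|_{L^{2}(\mathbb{R}^{d})}$. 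This last reduction is the delicate one, and an alternative route worth considering is to replace $\Phi$ by a (skew-)structured potential so that the second divergence integrates to a lower-order remainder directly.

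Assuming the divergence-form term is controlled in this way, one collects the contributions and takes the supremum over $\psi$ with $\|\psi\|_{L^{2}(\mathbb{R}^{d})}\le 1$ to obtain $\|b^{\varepsilon}S_{\varepsilon}(u)\|_{L^{2}(\mathbb{R}^{d})}\le C\varepsilon\|u\|_{H^{1}(\mathbb{R}^{d})}$, the constant $C$ depending only on $d$ and on $b$ (through $\|\phi\|_{L^{\infty}}$ and the constant in (\ref{7.1}) for $g=\Phi$). In summary, the reduction $b=\operatorname{div}\Phi$ furnished by Lemma \ref{l2.4} converts the statement into two applications of Lemma \ref{l6.1} plus the treatment of a single divergence-form remainder; that remainder is the only nontrivial step, and it is where the potential $\phi$, its boundedness, and the smoothing estimate (\ref{7.2}) all have to be combined carefully.
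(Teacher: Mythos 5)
Your reduction and your first estimate are exactly the paper's: Lemma \ref{l2.4} gives $b=\Delta\phi=\operatorname{div}\Phi$ with $\Phi=\nabla\phi\in L_{\infty,per}^{2}(\mathbb{R}^{d})^{d}$, so $b^{\varepsilon}=\varepsilon\operatorname{div}(\Phi^{\varepsilon})$, and the integration by parts against $\varphi\in\mathcal{C}_{0}^{\infty}(\mathbb{R}^{d})$ combined with (\ref{7.1}) yields $\left\Vert b^{\varepsilon}S_{\varepsilon}(u)\right\Vert_{H^{-1}(\mathbb{R}^{d})}\leq C\varepsilon\left\Vert u\right\Vert_{H^{1}(\mathbb{R}^{d})}$. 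You are also right that this is only an $H^{-1}$ bound and that passing to $L^{2}$ is the crux. The gap is that your proposed upgrade does not close. First, the decomposition is circular: $\varepsilon\operatorname{div}(\Phi^{\varepsilon}S_{\varepsilon}(u))=b^{\varepsilon}S_{\varepsilon}(u)+\varepsilon\,\Phi^{\varepsilon}\cdot S_{\varepsilon}(\nabla u)$, so controlling the divergence-form term in $L^{2}$ is literally the original claim restated. Second, the inputs you invoke are unavailable: the $L^{\infty}$ bound used in Lemma \ref{l5.2} is a Ladyzhenskaya--Uraltseva bound for the correctors $\chi_{j}$, which solve divergence-form equations with bounded data, whereas for $\Delta\phi=b$ with $b$ merely in $L_{\infty,per}^{2}(\mathbb{R}^{d})$ no $L^{\infty}$ bound on $\phi$ holds in general ($H^{2}_{loc}$ regularity does not give boundedness for $d\geq4$). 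Third, the further integration by parts you sketch lands a derivative on the test function and produces $\Delta\psi$, which is not controlled when $\psi$ ranges over the unit ball of $L^{2}(\mathbb{R}^{d})$.

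In fact no upgrade can work, because (\ref{2.11}) is false as stated. Take $b\in\mathcal{C}_{per}(Y)$ nonconstant with $\int_{Y}b\,dy=0$ and $u\in\mathcal{C}_{0}^{\infty}(\mathbb{R}^{d})$, $u\neq0$. Then $\left\Vert b^{\varepsilon}(S_{\varepsilon}(u)-u)\right\Vert_{L^{2}}\leq\left\Vert b\right\Vert_{L^{\infty}}\left\Vert S_{\varepsilon}(u)-u\right\Vert_{L^{2}}\rightarrow0$ by (\ref{7.2}), while $\left\Vert b^{\varepsilon}u\right\Vert_{L^{2}}^{2}=\int\left\vert b(x/\varepsilon)\right\vert^{2}\left\vert u\right\vert^{2}dx\rightarrow\left(\int_{Y}\left\vert b\right\vert^{2}dy\right)\left\Vert u\right\Vert_{L^{2}}^{2}>0$, so $\left\Vert b^{\varepsilon}S_{\varepsilon}(u)\right\Vert_{L^{2}}$ does not even tend to zero. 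The paper's own proof obtains precisely the $H^{-1}$ estimate you derived and then asserts that $\left\Vert g\right\Vert_{H^{-1}(\mathbb{R}^{d})}=\left\Vert g\right\Vert_{L^{2}(\mathbb{R}^{d})}$ for $g\in L^{2}(\mathbb{R}^{d})$, which is not a valid identity (only the inequality $\leq$ holds). So you have correctly located the weak point of the argument; what is genuinely established --- and what suffices for the later applications, where these terms enter as right-hand sides in the energy estimate (\ref{*11}) and are therefore only tested against $H_{0}^{1}$ functions --- is the $H^{-1}$ version of (\ref{2.11}).
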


\begin{proof}
Appealing to Lemma \ref{l2.4}, let $w\in H_{\infty,per}^{1}(Y)$ with
$h=\nabla_{y}w\in L_{\infty,per}^{2}(\mathbb{R}^{d})$ be such that $\Delta
_{y}w=b$ in $\mathbb{R}^{d}$. Then $\operatorname{div}_{y}h=b$, so that
$b^{\varepsilon}=\varepsilon\operatorname{div}_{x}h^{\varepsilon}$ (where
$h^{\varepsilon}(x)=h(x/\varepsilon)$). For any $\varphi\in\mathcal{C}%
_{0}^{\infty}(\mathbb{R}^{d})$ we have
\begin{align*}
\left\vert \int_{\mathbb{R}^{d}}b^{\varepsilon}S_{\varepsilon}(u)\varphi
dx\right\vert  &  =\left\vert \varepsilon\int_{\mathbb{R}^{d}}%
(\operatorname{div}_{x}h^{\varepsilon})S_{\varepsilon}(u)\varphi dx\right\vert
=\left\vert -\varepsilon\int_{\mathbb{R}^{d}}h^{\varepsilon}\nabla
(S_{\varepsilon}(u)\varphi)dx\right\vert \\
&  =\varepsilon\left\vert \int_{\mathbb{R}^{d}}h^{\varepsilon}(\varphi\nabla
S_{\varepsilon}(u)+S_{\varepsilon}(u)\nabla\varphi)dx\right\vert \\
&  \leq\varepsilon\,C\sup_{x\in\mathbb{R}^{d}}\left(
\mathchoice {{\setbox0=\hbox{$\displaystyle{\textstyle
-}{\int}$ } \vcenter{\hbox{$\textstyle -$
}}\kern-.6\wd0}}{{\setbox0=\hbox{$\textstyle{\scriptstyle -}{\int}$ } \vcenter{\hbox{$\scriptstyle -$
}}\kern-.6\wd0}}{{\setbox0=\hbox{$\scriptstyle{\scriptscriptstyle -}{\int}$
} \vcenter{\hbox{$\scriptscriptstyle -$
}}\kern-.6\wd0}}{{\setbox0=\hbox{$\scriptscriptstyle{\scriptscriptstyle
-}{\int}$ } \vcenter{\hbox{$\scriptscriptstyle -$ }}\kern-.6\wd0}}\!\int
_{B_{1}(x)}\left\vert h\right\vert ^{2}\right)  ^{\frac{1}{2}}\left\Vert
u\right\Vert _{H^{1}(\mathbb{R}^{d})}\left\Vert \varphi\right\Vert
_{H^{1}(\mathbb{R}^{d})}\\
&  \leq\varepsilon\,C\left\Vert u\right\Vert _{H^{1}(\mathbb{R}^{d}%
)}\left\Vert \varphi\right\Vert _{H^{1}(\mathbb{R}^{d})}.
\end{align*}
where above for the first inequality, we have used (\ref{7.1}) and the fact
that $\nabla\circ S_{\varepsilon}=S_{\varepsilon}\circ\nabla$. It follows that
$\left\Vert b^{\varepsilon}S_{\varepsilon}(u)\right\Vert _{H^{-1}%
(\mathbb{R}^{d})}\leq C\varepsilon\left\Vert u\right\Vert _{H^{1}%
(\mathbb{R}^{d})}$. However, as $b^{\varepsilon}S_{\varepsilon}(u)\in
L^{2}(\mathbb{R}^{d})$ (see (\ref{7.1})), it readily follows that $\left\Vert
b^{\varepsilon}S_{\varepsilon}(u)\right\Vert _{H^{-1}(\mathbb{R}^{d}%
)}=\left\Vert b^{\varepsilon}S_{\varepsilon}(u)\right\Vert _{L^{2}%
(\mathbb{R}^{d})}$ (see \cite[page 74]{Adams}) and so \eqref{2.11} holds.
\end{proof}

Let $u_{\varepsilon}$, $u_{0}\in H_{0}^{1}(\Omega)$ be weak solutions of
(\ref{*10}) and (\ref{1.4'}) respectively. We assume further that $u_{0}\in
H^{2}(\Omega)$. We denote the first order approximation of $u_{\varepsilon}$
by
\begin{equation}
v_{\varepsilon}=u_{0}+\varepsilon\chi^{\varepsilon}S_{\varepsilon}%
(\nabla\widetilde{u}_{0})+\varepsilon\chi_{0}^{\varepsilon}S_{\varepsilon
}(\widetilde{u}_{0}) \label{5.1}%
\end{equation}
where $\chi^{\varepsilon}(x)=\chi(x/\varepsilon)=(\chi_{j}(x/\varepsilon
))_{1\leq j\leq d}$ and $\chi_{0}^{\varepsilon}(x)=\chi_{0}(x/\varepsilon)$
(for $x\in\Omega$) with $\chi_{j}$ and $\chi_{0}$ being solutions of
(\ref{2.0'}) and (\ref{2.0}) respectively. Let $w_{\varepsilon}=u_{\varepsilon
}-u_{0}-\varepsilon\chi^{\varepsilon}S_{\varepsilon}(\nabla\widetilde{u}%
_{0})-\varepsilon\chi_{0}^{\varepsilon}S_{\varepsilon}(\widetilde{u}%
_{0})+z_{\varepsilon}$, where $z_{\varepsilon}\in H^{1}(\Omega)$ is defined
by
\begin{equation}
\mathcal{P}_{\varepsilon}z_{\varepsilon}=0\text{ in }\Omega,\ z_{\varepsilon
}=\varepsilon\chi^{\varepsilon}S_{\varepsilon}(\nabla\widetilde{u}%
_{0})+\varepsilon\chi_{0}^{\varepsilon}S_{\varepsilon}(\widetilde{u}%
_{0})\text{ on }\partial\Omega. \label{5.3}%
\end{equation}
Then setting
\begin{align*}
\mathcal{A}_{0}(y)  &  =\widehat{A}-A(y)(I_{d}+\nabla\chi(y))=(A_{j})_{1\leq
j\leq d},\\
V_{1}(y)  &  =\widehat{V}-(A(y)\nabla\chi_{0}(y)+V(y))=(V_{1,j})_{1\leq j\leq
d}\\
B_{1}(y)  &  =\widehat{B}-B(y)(\nabla\chi(y)+I_{d})\\
a_{0,1}(y)  &  =\widehat{a}_{0}-(B(y)\nabla\chi_{0}(y)+a_{0}(y))
\end{align*}
with $A_{j}=(A_{ij})_{1\leq i\leq d}$,\ \ $A_{ij}(y)=\widehat{a}_{ij}%
-a_{ij}(y)-\sum_{k=1}^{d}a_{ik}(y)\frac{\partial\chi_{j}}{\partial y_{k}}%
(y)$,\ $\widehat{A}=(\widehat{a}_{ij})_{1\leq i,j\leq d}$, we have the following

\begin{lemma}
\label{l6.2}Let $u_{\varepsilon}$, $u_{0}$ be the weak solutions of
\emph{(\ref{*10})} and \emph{(\ref{1.4'})} respectively. Assume $u_{0}\in
H^{2}(\Omega)$. Then
\begin{equation}%
\begin{array}
[c]{l}%
\mathcal{P}_{\varepsilon}w_{\varepsilon}=-\operatorname{div}(\mathcal{A}%
_{0}^{\varepsilon}S_{\varepsilon}(\nabla\widetilde{u}_{0})+V_{1}^{\varepsilon
}S_{\varepsilon}(\widetilde{u}_{0}))\\
-\operatorname{div}[(\widehat{A}-A^{\varepsilon})(\nabla u_{0}-S_{\varepsilon
}(\nabla\widetilde{u}_{0}))+(\widehat{V}-V^{\varepsilon})(u_{0}-S_{\varepsilon
}(\widetilde{u}_{0}))]+B_{1}^{\varepsilon}S_{\varepsilon}(\nabla\widetilde
{u}_{0})\\
+a_{0,1}^{\varepsilon}S_{\varepsilon}(\widetilde{u}_{0})+(\widehat
{B}-B^{\varepsilon})(\nabla u_{0}-S_{\varepsilon}(\nabla\widetilde{u}%
_{0}))+(\widehat{a}_{0}-a_{0}^{\varepsilon})(u_{0}-S_{\varepsilon}%
(\widetilde{u}_{0}))\\
+\varepsilon\operatorname{div}[\chi^{\varepsilon}A^{\varepsilon}\nabla
S_{\varepsilon}(\nabla\widetilde{u}_{0})+\chi_{0}^{\varepsilon}\nabla
S_{\varepsilon}(\widetilde{u}_{0})+\chi^{\varepsilon}V^{\varepsilon
}S_{\varepsilon}(\nabla\widetilde{u}_{0})+\chi_{0}^{\varepsilon}%
V^{\varepsilon}S_{\varepsilon}(\widetilde{u}_{0})]\\
-\varepsilon\lbrack\chi^{\varepsilon}B^{\varepsilon}\nabla S_{\varepsilon
}(\nabla\widetilde{u}_{0})+\chi_{0}^{\varepsilon}B^{\varepsilon}\nabla
S_{\varepsilon}(\widetilde{u}_{0})+a_{0}^{\varepsilon}\chi^{\varepsilon
}S_{\varepsilon}(\nabla\widetilde{u}_{0})\\
+a_{0}^{\varepsilon}\chi_{0}^{\varepsilon}S_{\varepsilon}(\widetilde{u}%
_{0})+\mu\chi^{\varepsilon}S_{\varepsilon}(\widetilde{u}_{0})+\mu\chi
_{0}^{\varepsilon}S_{\varepsilon}(\widetilde{u}_{0})].
\end{array}
\label{7.3}%
\end{equation}

\end{lemma}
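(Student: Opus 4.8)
The plan is to exploit that $w_{\varepsilon}$ is by construction the combination $u_{\varepsilon}-v_{\varepsilon}+z_{\varepsilon}$, where $v_{\varepsilon}$ is the first order approximation (\ref{5.1}) and $z_{\varepsilon}$ is the boundary corrector (\ref{5.3}). Since $u_{\varepsilon}$ solves $\mathcal{P}_{\varepsilon}u_{\varepsilon}=f+\operatorname{div}F$, $u_{0}$ solves $\mathcal{P}_{0}u_{0}=f+\operatorname{div}F$, and $z_{\varepsilon}$ satisfies $\mathcal{P}_{\varepsilon}z_{\varepsilon}=0$ in $\Omega$, linearity of $\mathcal{P}_{\varepsilon}$ gives at once
\[
\mathcal{P}_{\varepsilon}w_{\varepsilon}=\mathcal{P}_{\varepsilon}u_{\varepsilon}-\mathcal{P}_{\varepsilon}v_{\varepsilon}+\mathcal{P}_{\varepsilon}z_{\varepsilon}=\mathcal{P}_{0}u_{0}-\mathcal{P}_{\varepsilon}v_{\varepsilon}.
\]
Thus the whole content of the lemma is the identification of $\mathcal{P}_{0}u_{0}-\mathcal{P}_{\varepsilon}v_{\varepsilon}$ with the right-hand side of (\ref{7.3}), a lengthy but mechanical application of the chain rule; in particular, the corrector equations (\ref{2.0})--(\ref{2.0'}) are \emph{not} needed at this stage — they enter only the a priori estimates of the next subsection, through Lemmas \ref{l5.2} and \ref{l2.5}.

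To carry this out I would first expand $\nabla v_{\varepsilon}$. Abbreviating $g:=S_{\varepsilon}(\nabla\widetilde{u}_{0})$ and $g_{0}:=S_{\varepsilon}(\widetilde{u}_{0})$ and using $\nabla_{x}(\chi_{j}^{\varepsilon})=\varepsilon^{-1}(\nabla_{y}\chi_{j})^{\varepsilon}$ together with $\nabla\circ S_{\varepsilon}=S_{\varepsilon}\circ\nabla$, one gets
\[
\nabla v_{\varepsilon}=\nabla u_{0}+(\nabla_{y}\chi)^{\varepsilon}g+(\nabla_{y}\chi_{0})^{\varepsilon}g_{0}+\varepsilon\chi^{\varepsilon}\nabla S_{\varepsilon}(\nabla\widetilde{u}_{0})+\varepsilon\chi_{0}^{\varepsilon}\nabla S_{\varepsilon}(\widetilde{u}_{0}).
\]
Then I would form the flux $A^{\varepsilon}\nabla v_{\varepsilon}+V^{\varepsilon}v_{\varepsilon}$ and the lower order part $B^{\varepsilon}\nabla v_{\varepsilon}+(a_{0}^{\varepsilon}+\mu)v_{\varepsilon}$, and in each add and subtract the smoothed quantities $A^{\varepsilon}g$, $V^{\varepsilon}g_{0}$, $B^{\varepsilon}\cdot g$, $a_{0}^{\varepsilon}g_{0}$ so as to make the combinations $A^{\varepsilon}(I_{d}+(\nabla_{y}\chi)^{\varepsilon})$, $A^{\varepsilon}(\nabla_{y}\chi_{0})^{\varepsilon}+V^{\varepsilon}$, $B^{\varepsilon}(I_{d}+(\nabla_{y}\chi)^{\varepsilon})$, $B^{\varepsilon}\cdot(\nabla_{y}\chi_{0})^{\varepsilon}+a_{0}^{\varepsilon}$ appear; by the very definitions of $\widehat{A},\widehat{V},\widehat{B},\widehat{a}_{0}$ and of $\mathcal{A}_{0},V_{1},B_{1},a_{0,1}$ these equal $\widehat{A}-\mathcal{A}_{0}^{\varepsilon}$, $\widehat{V}-V_{1}^{\varepsilon}$, $\widehat{B}-B_{1}^{\varepsilon}$, $\widehat{a}_{0}-a_{0,1}^{\varepsilon}$. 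Substituting and once more adding and subtracting $\widehat{A}(\nabla u_{0}-g)$, $\widehat{V}(u_{0}-g_{0})$, $\widehat{B}\cdot(\nabla u_{0}-g)$, $\widehat{a}_{0}(u_{0}-g_{0})$, the flux becomes $\widehat{A}\nabla u_{0}+\widehat{V}u_{0}-(\widehat{A}-A^{\varepsilon})(\nabla u_{0}-g)-(\widehat{V}-V^{\varepsilon})(u_{0}-g_{0})-\mathcal{A}_{0}^{\varepsilon}g-V_{1}^{\varepsilon}g_{0}+\varepsilon R_{A}$, and the lower order part takes the analogous shape with a remainder $\varepsilon R_{B}$, where $R_{A}$ and $R_{B}$ collect exactly the terms in which a corrector multiplies $\nabla S_{\varepsilon}(\cdot)$ or $S_{\varepsilon}(\cdot)$ itself.

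Finally I would apply $-\operatorname{div}$ to the flux, add the lower order part, and read off the constant-coefficient block $-\operatorname{div}(\widehat{A}\nabla u_{0}+\widehat{V}u_{0})+\widehat{B}\cdot\nabla u_{0}+(\widehat{a}_{0}+\mu)u_{0}=\mathcal{P}_{0}u_{0}$; subtracting $\mathcal{P}_{\varepsilon}v_{\varepsilon}$ from $\mathcal{P}_{0}u_{0}$ cancels this block and leaves precisely the terms appearing on the right-hand side of (\ref{7.3}). I do not expect any genuine conceptual obstacle — every identity used is a purely algebraic consequence of the definitions introduced just before the statement. The only real difficulty is bookkeeping: keeping track of the $O(\varepsilon)$ terms produced by differentiating the products $\chi^{\varepsilon}S_{\varepsilon}(\nabla\widetilde{u}_{0})$ and $\chi_{0}^{\varepsilon}S_{\varepsilon}(\widetilde{u}_{0})$, and sorting every resulting term into divergence form versus non-divergence form with the correct signs and the correct matrix--vector contractions.
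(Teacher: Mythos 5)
Your proposal is correct and follows essentially the same route as the paper: the authors likewise reduce the identity to $\mathcal{P}_{\varepsilon}w_{\varepsilon}=\mathcal{P}_{0}u_{0}-\mathcal{P}_{\varepsilon}v_{\varepsilon}$ (using $\mathcal{P}_{\varepsilon}u_{\varepsilon}=\mathcal{P}_{0}u_{0}$ and $\mathcal{P}_{\varepsilon}z_{\varepsilon}=0$) and then expand $\nabla v_{\varepsilon}$, adding and subtracting the smoothed quantities so that $\mathcal{A}_{0},V_{1},B_{1},a_{0,1}$ and the differences $\nabla u_{0}-S_{\varepsilon}(\nabla\widetilde{u}_{0})$, $u_{0}-S_{\varepsilon}(\widetilde{u}_{0})$ appear. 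Your observation that the corrector equations are not needed here and that the lemma is purely algebraic bookkeeping is also consistent with the paper's treatment.
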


\begin{proof}
By direct calculations we get
\begin{align*}
&  -\operatorname{div}(A^{\varepsilon}w_{\varepsilon}+V^{\varepsilon
}w_{\varepsilon})+B^{\varepsilon}\nabla w_{\varepsilon}+a_{0}^{\varepsilon
}w_{\varepsilon}+\mu w_{\varepsilon}\\
&  =-\operatorname{div}(A^{\varepsilon}\nabla u_{\varepsilon}+V^{\varepsilon
}u_{\varepsilon})+B^{\varepsilon}\nabla u_{\varepsilon}+a_{0}^{\varepsilon
}u_{\varepsilon}+\mu u_{\varepsilon}\\
&  +\operatorname{div}(A^{\varepsilon}\nabla v_{\varepsilon}+V^{\varepsilon
}v_{\varepsilon})-(B^{\varepsilon}\nabla v_{\varepsilon}+a_{0}^{\varepsilon
}v_{\varepsilon}+\mu v_{\varepsilon})\\
&  =-\operatorname{div}(\widehat{A}\nabla u_{0}+\widehat{V}u_{0})+\widehat
{B}\nabla u_{0}+\widehat{a}_{0}u_{0}+\mu u_{0}+\operatorname{div}%
(A^{\varepsilon}\nabla v_{\varepsilon}+V^{\varepsilon}v_{\varepsilon})\\
&  -(B^{\varepsilon}\nabla v_{\varepsilon}+a_{0}^{\varepsilon}v_{\varepsilon
}+\mu v_{\varepsilon})\\
&  =-\operatorname{div}\left[  (\widehat{A}\nabla u_{0}-A^{\varepsilon}\nabla
v_{\varepsilon})+(\widehat{V}u_{0}-V^{\varepsilon}v_{\varepsilon})\right]
+\widehat{B}\nabla u_{0}-B^{\varepsilon}\nabla v_{\varepsilon}\\
&  +\widehat{a}_{0}u_{0}-a_{0}^{\varepsilon}v_{\varepsilon}+\mu(u_{0}%
-v_{\varepsilon}),
\end{align*}
where above, we have used in the second equality, the fact that $\mathcal{P}%
_{\varepsilon}u_{\varepsilon}=\mathcal{P}_{0}u_{0}$. But
\begin{align*}
&  -\operatorname{div}\left[  (\widehat{A}\nabla u_{0}-A^{\varepsilon}\nabla
v_{\varepsilon})+(\widehat{V}u_{0}-V^{\varepsilon}v_{\varepsilon})\right] \\
&  =-\operatorname{div}(\mathcal{A}_{0}^{\varepsilon}S_{\varepsilon}%
(\nabla\widetilde{u}_{0})+V_{1}^{\varepsilon}S_{\varepsilon}(\widetilde{u}%
_{0}))\\
&  -\operatorname{div}\left(  (\widehat{A}-A^{\varepsilon})(\nabla
u_{0}-S_{\varepsilon}(\nabla\widetilde{u}_{0}))+(\widehat{V}-V^{\varepsilon
})(u_{0}-S_{\varepsilon}(\widetilde{u}_{0}))\right) \\
&  +\varepsilon\operatorname{div}\left(  \chi^{\varepsilon}A^{\varepsilon
}\nabla S_{\varepsilon}(\nabla\widetilde{u}_{0}\right)  +\chi_{0}%
^{\varepsilon}A^{\varepsilon}\nabla S_{\varepsilon}(\widetilde{u}_{0}%
)+\chi^{\varepsilon}V^{\varepsilon}S_{\varepsilon}(\nabla\widetilde{u}%
_{0})+\chi_{0}^{\varepsilon}V^{\varepsilon}S_{\varepsilon}(\widetilde{u}_{0}))
\end{align*}
and
\begin{align*}
&  \widehat{B}\nabla u_{0}-B^{\varepsilon}\nabla v_{\varepsilon}+\widehat
{a}_{0}u_{0}-a_{0}^{\varepsilon}v_{\varepsilon}+\mu(u_{0}-v_{\varepsilon})\\
&  =(\widehat{B}-B^{\varepsilon})(\nabla u_{0}-S_{\varepsilon}(\nabla
\widetilde{u}_{0}))+B_{1}^{\varepsilon}S_{\varepsilon}(\nabla\widetilde{u}%
_{0})+(\widehat{a}_{0}-a_{0}^{\varepsilon})(u_{0}-S_{\varepsilon}%
(\widetilde{u}_{0}))+a_{0,1}^{\varepsilon}S_{\varepsilon}(\widetilde{u}_{0})\\
&  -\varepsilon\lbrack\chi^{\varepsilon}B^{\varepsilon}\nabla S_{\varepsilon
}(\nabla\widetilde{u}_{0})+\chi_{0}^{\varepsilon}B^{\varepsilon}\nabla
S_{\varepsilon}(\widetilde{u}_{0})+a_{0}^{\varepsilon}\chi^{\varepsilon
}S_{\varepsilon}(\nabla\widetilde{u}_{0})+a_{0}^{\varepsilon}\chi
_{0}^{\varepsilon}S_{\varepsilon}(\widetilde{u}_{0})\\
&  +\mu(\chi^{\varepsilon}S_{\varepsilon}(\nabla\widetilde{u}_{0})+\chi
_{0}^{\varepsilon}S_{\varepsilon}(\widetilde{u}_{0})].
\end{align*}
The result follows thereby.
\end{proof}

With the above result in mind, our next aim is to prove the $H^{1}$-rate of
convergence stated in the following result.

\begin{proposition}
\label{p6.2}Let $\Omega$ be a $\mathcal{C}^{1,1}$ bounded domain in
$\mathbb{R}^{d}$. Suppose \emph{(\ref{1.2})}, \emph{(\ref{2.1})},
\emph{(\ref{2.2'})} and \emph{(\ref{2.2})} hold. Let $u_{\varepsilon}$ and
$u_{0}$ be the weak solutions of \emph{(\ref{*10})} and \emph{(\ref{1.4'})}
respectively. Under the further assumption that $u_{0}\in H^{2}(\Omega)$,
there exists $C=C(d,\Omega,\alpha,\beta,\alpha_{0})>0$ such that
\begin{equation}
\left\Vert u_{\varepsilon}-u_{0}-\varepsilon\chi^{\varepsilon}S_{\varepsilon
}(\nabla\widetilde{u}_{0})-\varepsilon\chi_{0}^{\varepsilon}S_{\varepsilon
}(\widetilde{u}_{0})+z_{\varepsilon}\right\Vert _{H^{1}(\Omega)}\leq
C\varepsilon\left\Vert u_{0}\right\Vert _{H^{2}(\Omega)}. \label{20}%
\end{equation}

\end{proposition}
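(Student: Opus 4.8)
The plan is to estimate $\|w_\varepsilon\|_{H^1(\Omega)}$ using the coercivity of the bilinear form $\mathcal{B}_{\varepsilon,\Omega}$ together with the explicit expression for $\mathcal{P}_\varepsilon w_\varepsilon$ provided by Lemma \ref{l6.2}. Observe first that $w_\varepsilon \in H_0^1(\Omega)$ by construction: indeed $u_\varepsilon, u_0 \in H_0^1(\Omega)$, while the boundary term $\varepsilon\chi^\varepsilon S_\varepsilon(\nabla\widetilde u_0) + \varepsilon\chi_0^\varepsilon S_\varepsilon(\widetilde u_0)$ has been subtracted off by adding $z_\varepsilon$, which solves $\mathcal{P}_\varepsilon z_\varepsilon = 0$ with exactly that boundary data. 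Hence we may test the equation $\mathcal{P}_\varepsilon w_\varepsilon = (\text{RHS of }\eqref{7.3})$ against $w_\varepsilon$ itself, and by the coercivity estimate recalled after \eqref{*0} (using $\mu \geq \mu_0$, so in fact the form is coercive on $H_0^1$, absorbing the $\mu_0\|w_\varepsilon\|_{L^2}^2$ term via $\mu$), we get
\[
\alpha_* \|w_\varepsilon\|_{H_0^1(\Omega)}^2 \leq \mathcal{B}_{\varepsilon,\Omega}(w_\varepsilon, w_\varepsilon) = \langle \mathcal{P}_\varepsilon w_\varepsilon, w_\varepsilon\rangle.
\]
The right-hand side of \eqref{7.3} is a sum of terms of two types: (i) divergence-form terms $\operatorname{div}(G_\varepsilon)$, which upon pairing with $w_\varepsilon$ contribute $\int_\Omega G_\varepsilon \cdot \nabla w_\varepsilon$, bounded by $\|G_\varepsilon\|_{L^2(\Omega)}\|\nabla w_\varepsilon\|_{L^2(\Omega)}$; and (ii) zero-order terms $g_\varepsilon$, contributing $\int_\Omega g_\varepsilon w_\varepsilon \leq \|g_\varepsilon\|_{L^2(\Omega)}\|w_\varepsilon\|_{L^2(\Omega)}$. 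Thus it suffices to show every such $G_\varepsilon$ and $g_\varepsilon$ has $L^2(\Omega)$-norm bounded by $C\varepsilon\|u_0\|_{H^2(\Omega)}$, and then divide through by $\|w_\varepsilon\|_{H_0^1(\Omega)}$.

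Now I would estimate each group of terms on the right of \eqref{7.3}. The genuinely oscillatory terms $\operatorname{div}(\mathcal{A}_0^\varepsilon S_\varepsilon(\nabla\widetilde u_0) + V_1^\varepsilon S_\varepsilon(\widetilde u_0))$ require Lemma \ref{l2.5}: the matrices $\mathcal{A}_0 = \widehat A - A(I_d + \nabla\chi)$ and $V_1 = \widehat V - (A\nabla\chi_0 + V)$ have mean value zero by the very definitions \eqref{1.5'} of $\widehat A, \widehat V$, and they lie in $L_{\infty,per}^2(\mathbb{R}^d)$ (using $\chi, \chi_0 \in H_{\infty,per}^1$ from Theorem \ref{t1.2} and the $L^\infty$ bound on the correctors from Lemma \ref{l5.2}'s proof, namely \cite[Ch.~III, Thm.~13.1]{LU1968}); hence Lemma \ref{l2.5} applied componentwise to $u = \partial_k\widetilde u_0$ (resp. $u = \widetilde u_0$) gives $\|\mathcal{A}_0^\varepsilon S_\varepsilon(\nabla\widetilde u_0)\|_{L^2(\mathbb{R}^d)} \leq C\varepsilon\|\widetilde u_0\|_{H^2(\mathbb{R}^d)} \leq C\varepsilon\|u_0\|_{H^2(\Omega)}$ by the extension bound \eqref{7.0}, and similarly for $V_1^\varepsilon S_\varepsilon(\widetilde u_0)$. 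The "smoothing defect" terms $(\widehat A - A^\varepsilon)(\nabla u_0 - S_\varepsilon(\nabla\widetilde u_0))$ etc. are controlled by the uniform $L^\infty$ bounds on $A, B, V, a_0$ (from (H1)--(H2), hence on $\widehat A - A^\varepsilon$) together with \eqref{7.2}: $\|\nabla u_0 - S_\varepsilon(\nabla\widetilde u_0)\|_{L^2(\Omega)} \leq \|\nabla\widetilde u_0 - S_\varepsilon(\nabla\widetilde u_0)\|_{L^2(\mathbb{R}^d)} \leq C\varepsilon\|\nabla^2\widetilde u_0\|_{L^2(\mathbb{R}^d)} \leq C\varepsilon\|u_0\|_{H^2(\Omega)}$, and likewise $\|u_0 - S_\varepsilon(\widetilde u_0)\|_{L^2(\Omega)} \leq C\varepsilon\|u_0\|_{H^2(\Omega)}$ (here we use $\nabla u_0 = \nabla\widetilde u_0$ on $\Omega$). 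The zero-order terms $B_1^\varepsilon S_\varepsilon(\nabla\widetilde u_0)$ and $a_{0,1}^\varepsilon S_\varepsilon(\widetilde u_0)$ are bounded using \eqref{7.1} of Lemma \ref{l6.1} with $p = 2$: since $B_1, a_{0,1} \in L_{\infty,per}^2(\mathbb{R}^d)$ have finite $\sup_x \fint_{B_1(x)}|\cdot|^2$, we get $\|B_1^\varepsilon S_\varepsilon(\nabla\widetilde u_0)\|_{L^2} \leq C\|\nabla\widetilde u_0\|_{L^2(\mathbb{R}^d)} \leq C\|u_0\|_{H^1(\Omega)}$ — but note this is only $O(1)$, not $O(\varepsilon)$. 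This is where the homogenized-coefficient structure must be used again: I expect one needs to pair $B_1^\varepsilon S_\varepsilon(\nabla\widetilde u_0)$ against $w_\varepsilon$ and integrate by parts using $M(B_1) = 0$ and $M(a_{0,1}) = 0$ (definitions \eqref{1.5'}) in the spirit of Lemma \ref{l2.5}, i.e. write $b^\varepsilon = \varepsilon\operatorname{div}_x h^\varepsilon$, to recover the missing factor of $\varepsilon$; alternatively these zero-order contributions may be grouped with the divergence terms before testing. Finally, all the terms carrying an explicit prefactor $\varepsilon$ — those involving $\chi^\varepsilon A^\varepsilon \nabla S_\varepsilon(\nabla\widetilde u_0)$, $\chi_0^\varepsilon B^\varepsilon \nabla S_\varepsilon(\widetilde u_0)$, $\mu\chi^\varepsilon S_\varepsilon(\cdots)$, etc. — are handled by the $L^\infty$ bound on $\chi, \chi_0$, the $L^\infty$ bounds on $A, B, a_0$, and the elementary estimate $\|\nabla S_\varepsilon(f)\|_{L^2(\mathbb{R}^d)} = \|S_\varepsilon(\nabla f)\|_{L^2} \leq \|\nabla f\|_{L^2}$ (convolution with an $L^1$-normalized kernel), so each such term is $O(\varepsilon)\cdot\|u_0\|_{H^2(\Omega)}$ directly.

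Collecting, $\langle\mathcal{P}_\varepsilon w_\varepsilon, w_\varepsilon\rangle \leq C\varepsilon\|u_0\|_{H^2(\Omega)}\big(\|\nabla w_\varepsilon\|_{L^2(\Omega)} + \|w_\varepsilon\|_{L^2(\Omega)}\big) \leq C\varepsilon\|u_0\|_{H^2(\Omega)}\|w_\varepsilon\|_{H_0^1(\Omega)}$, and combining with coercivity and dividing by $\|w_\varepsilon\|_{H_0^1(\Omega)}$ yields \eqref{20}. The main obstacle, as flagged above, is ensuring that the lower-order (zero-order) terms $B_1^\varepsilon S_\varepsilon(\nabla\widetilde u_0)$ and $a_{0,1}^\varepsilon S_\varepsilon(\widetilde u_0)$ actually produce the full factor $\varepsilon$ rather than merely $O(1)$; this is precisely why the lower-order terms of $\mathcal{P}_\varepsilon$ "require a careful treatment" as announced in the introduction, and it is resolved by exploiting the mean-value-zero property of $B_1$ and $a_{0,1}$ together with an auxiliary potential as in the proof of Lemma \ref{l2.5}. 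A secondary technical point is that Lemma \ref{l2.5} is stated for $u \in H^1(\mathbb{R}^d)$, so one must consistently work with the extension $\widetilde u_0 \in H^2(\mathbb{R}^d)$ and only afterwards restrict to $\Omega$, which the definition \eqref{5.1} of $v_\varepsilon$ already arranges.
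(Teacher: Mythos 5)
Your overall architecture --- compute $\mathcal{P}_{\varepsilon}w_{\varepsilon}$ via Lemma \ref{l6.2}, test against $w_{\varepsilon}\in H_{0}^{1}(\Omega)$ (which is indeed in $H_0^1$ thanks to the boundary corrector $z_{\varepsilon}$), use coercivity, and bound every source term by $C\varepsilon\left\Vert u_{0}\right\Vert _{H^{2}(\Omega)}$ --- coincides with the paper's, which simply invokes the a priori estimate (\ref{*11}) instead of re-running the energy argument; your treatment of the smoothing-defect terms via (\ref{7.2}) and of the terms carrying an explicit prefactor $\varepsilon$ also matches. The genuine gap is in the leading divergence-form terms. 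You claim that Lemma \ref{l2.5}, applied componentwise, gives $\left\Vert \mathcal{A}_{0}^{\varepsilon}S_{\varepsilon}(\nabla\widetilde{u}_{0})\right\Vert _{L^{2}}\leq C\varepsilon\left\Vert u_{0}\right\Vert _{H^{2}(\Omega)}$. That $L^{2}$ bound cannot hold: a mean-zero oscillating coefficient times a fixed $L^{2}$ function is small in $H^{-1}$, not in $L^{2}$ --- here $\left\Vert \mathcal{A}_{0}^{\varepsilon}S_{\varepsilon}(\nabla\widetilde{u}_{0})\right\Vert _{L^{2}(\Omega)}^{2}$ tends to $\int_{\Omega}M(|\mathcal{A}_{0}\nabla u_{0}(x)|^{2})\,dx$, which is $O(1)$ unless $\mathcal{A}_{0}\equiv0$. (The proof of Lemma \ref{l2.5} genuinely produces only an $H^{-1}(\mathbb{R}^{d})$ estimate, and the paper does not use that lemma for these terms.) An $H^{-1}$ estimate is of no use at this point, because after integrating by parts the flux $\mathcal{A}_{0}^{\varepsilon}S_{\varepsilon}(\nabla\widetilde{u}_{0})+V_{1}^{\varepsilon}S_{\varepsilon}(\widetilde{u}_{0})$ is paired with $\nabla w_{\varepsilon}$, which is only in $L^{2}$.

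The missing idea is the flux-corrector (skew-symmetric potential) construction, which is the core of the paper's proof. Each column $A_{j}$ of $\mathcal{A}_{0}$ and the vector $V_{1}$ are not only mean-zero but also divergence-free, by the corrector equations (\ref{1.6'}) and (\ref{1.66'}); hence, via Lemma \ref{l2.4}, one writes $A_{j}=\operatorname{div}G_{j}$ and $V_{1}=\operatorname{div}H$ with $G_{j},H$ \emph{skew-symmetric} matrices with entries in $H_{\infty,per}^{1}(\mathbb{R}^{d})$ obeying the uniform local bound (\ref{21}). Skew-symmetry annihilates the double divergence and yields the exact identity $-\operatorname{div}(\mathcal{A}_{0}^{\varepsilon}S_{\varepsilon}(\nabla\widetilde{u}_{0})+V_{1}^{\varepsilon}S_{\varepsilon}(\widetilde{u}_{0}))=-\varepsilon\operatorname{div}r_{1}^{\varepsilon}$ with $r_{1}^{\varepsilon}=\sum_{j}G_{j}^{\varepsilon}\nabla S_{\varepsilon}(\partial\widetilde{u}_{0}/\partial x_{j})+H^{\varepsilon}\nabla S_{\varepsilon}(\widetilde{u}_{0})$, and (\ref{7.1}) then bounds $\left\Vert r_{1}^{\varepsilon}\right\Vert _{L^{2}}$ by $C\left\Vert u_{0}\right\Vert _{H^{2}(\Omega)}$, so the factor $\varepsilon$ is extracted honestly as an $F$-type term in (\ref{*11}). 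By contrast, your instinct about the zero-order terms $B_{1}^{\varepsilon}S_{\varepsilon}(\nabla\widetilde{u}_{0})$ and $a_{0,1}^{\varepsilon}S_{\varepsilon}(\widetilde{u}_{0})$ is the right one and is essentially what the paper does: there only mean-zero is available ($B_{1}$ and $a_{0,1}$ are not divergence-free), one represents them as $\varepsilon$ times a divergence of bounded potentials, and the resulting $H^{-1}$-type smallness suffices precisely because these terms are paired with $w_{\varepsilon}$ itself rather than with its gradient.
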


\begin{proof}
First of all, let $\mathbf{g}\in(L_{\infty,per}^{2}(\mathbb{R}^{d}))^{d}$ be
solenoidal ($\operatorname{div}\mathbf{g}=0$) with $M(\mathbf{g})=0$. Then in
view of Lemma \ref{l2.4}, we can show that there exists a skew-symmetric
matrix $G$ with entries in $H_{\infty,per}^{1}(\mathbb{R}^{d})$ such that
$\mathbf{g}=\operatorname{div}G$. Moreover it holds that
\begin{equation}
\sup_{x\in\mathbb{R}^{d}}\left(
\mathchoice {{\setbox0=\hbox{$\displaystyle{\textstyle
-}{\int}$ } \vcenter{\hbox{$\textstyle -$
}}\kern-.6\wd0}}{{\setbox0=\hbox{$\textstyle{\scriptstyle -}{\int}$ } \vcenter{\hbox{$\scriptstyle -$
}}\kern-.6\wd0}}{{\setbox0=\hbox{$\scriptstyle{\scriptscriptstyle -}{\int}$
} \vcenter{\hbox{$\scriptscriptstyle -$
}}\kern-.6\wd0}}{{\setbox0=\hbox{$\scriptscriptstyle{\scriptscriptstyle
-}{\int}$ } \vcenter{\hbox{$\scriptscriptstyle -$ }}\kern-.6\wd0}}\!\int
_{B_{1}(x)}\left\vert G\right\vert ^{2}\right)  ^{\frac{1}{2}}\leq
C=C(\mathbf{g},d). \label{21}%
\end{equation}

Bearing this in mind, let us first analyze the terms $\mathcal{A}_{0}%
,V_{1},B_{1}$ and $a_{0,1}$. Starting with $\mathcal{A}_{0}=(A_{j})_{1\leq
j\leq d}$ where $A_{j}=(A_{ij})_{1\leq i\leq d}$ with $A_{ij}(y)=\widehat
{a}_{ij}-a_{ij}(y)-\sum_{k=1}^{d}a_{ik}(y)\frac{\partial\chi_{j}}{\partial
y_{k}}(y)$, we observe that $A_{ij}\in L_{\infty,per}^{2}(\mathbb{R}^{d})$.
Thus $A_{j}\in(L_{\infty,per}^{2}(\mathbb{R}^{d}))^{d}$ with $M(A_{j})=0$ and
$\operatorname{div}A_{j}=0$. Appealing to what has being said in the first
lines of this proof, we derive the existence of a skew-symmetric matrix
$G_{j}$ with entries in $H_{\infty,per}^{1}(\mathbb{R}^{d})$ such that%
\[
A_{j}=\operatorname{div}G_{j}\text{ and }\sup_{x\in\mathbb{R}^{d}}\left(
\mathchoice {{\setbox0=\hbox{$\displaystyle{\textstyle
-}{\int}$ } \vcenter{\hbox{$\textstyle -$
}}\kern-.6\wd0}}{{\setbox0=\hbox{$\textstyle{\scriptstyle -}{\int}$ } \vcenter{\hbox{$\scriptstyle -$
}}\kern-.6\wd0}}{{\setbox0=\hbox{$\scriptstyle{\scriptscriptstyle -}{\int}$
} \vcenter{\hbox{$\scriptscriptstyle -$
}}\kern-.6\wd0}}{{\setbox0=\hbox{$\scriptscriptstyle{\scriptscriptstyle
-}{\int}$ } \vcenter{\hbox{$\scriptscriptstyle -$ }}\kern-.6\wd0}}\!\int
_{B_{1}(x)}\left\vert G_{j}\right\vert ^{2}\right)  ^{\frac{1}{2}}\leq C.
\]
As for $V_{1}$, we repeat the same process to obtain a skew-symmetric matrix
$H\in(H_{\infty,per}^{1}(Y))^{d\times d}$ such that
\[
V_{1}=\operatorname{div}H\text{ and }\sup_{x\in\mathbb{R}^{d}}\left(
\mathchoice {{\setbox0=\hbox{$\displaystyle{\textstyle
-}{\int}$ } \vcenter{\hbox{$\textstyle -$
}}\kern-.6\wd0}}{{\setbox0=\hbox{$\textstyle{\scriptstyle -}{\int}$ } \vcenter{\hbox{$\scriptstyle -$
}}\kern-.6\wd0}}{{\setbox0=\hbox{$\scriptstyle{\scriptscriptstyle -}{\int}$
} \vcenter{\hbox{$\scriptscriptstyle -$
}}\kern-.6\wd0}}{{\setbox0=\hbox{$\scriptscriptstyle{\scriptscriptstyle
-}{\int}$ } \vcenter{\hbox{$\scriptscriptstyle -$ }}\kern-.6\wd0}}\!\int
_{B_{1}(x)}\left\vert H\right\vert ^{2}\right)  ^{\frac{1}{2}}\leq C.
\]
Set $B_{1}=(b_{i}^{1})_{1\leq i\leq d}$ where $b_{i}^{1}\in L_{\infty,per}%
^{2}(\mathbb{R}^{d})$ with $M(b_{i}^{1})=0$. Then we infer from Lemma
\ref{l2.4} that there is a vector $\boldsymbol{h}_{i}\in(H_{\infty,per}%
^{1}(\mathbb{R}^{d}))^{d}$ for $0\leq i\leq d$, such that
\[
b_{i}^{1}=\operatorname{div}\boldsymbol{h}_{i}\ \ (1\leq i\leq d)\text{ and
}a_{0,1}=\operatorname{div}\boldsymbol{h}_{0}%
\]
and
\[
\sup_{x\in\mathbb{R}^{d}}\left(
\mathchoice {{\setbox0=\hbox{$\displaystyle{\textstyle
-}{\int}$ } \vcenter{\hbox{$\textstyle -$
}}\kern-.6\wd0}}{{\setbox0=\hbox{$\textstyle{\scriptstyle -}{\int}$ } \vcenter{\hbox{$\scriptstyle -$
}}\kern-.6\wd0}}{{\setbox0=\hbox{$\scriptstyle{\scriptscriptstyle -}{\int}$
} \vcenter{\hbox{$\scriptscriptstyle -$
}}\kern-.6\wd0}}{{\setbox0=\hbox{$\scriptscriptstyle{\scriptscriptstyle
-}{\int}$ } \vcenter{\hbox{$\scriptscriptstyle -$ }}\kern-.6\wd0}}\!\int
_{B_{1}(x)}\left\vert \boldsymbol{h}_{i}\right\vert ^{2}\right)  ^{\frac{1}%
{2}}\leq C,\ \ 0\leq i\leq d.
\]
Recalling that $G_{j}$ and $H$ are skew-symmetric, we get that
\[
-\operatorname{div}(\mathcal{A}_{0}^{\varepsilon}S_{\varepsilon}%
(\nabla\widetilde{u}_{0})+V_{1}^{\varepsilon}S_{\varepsilon}(\widetilde{u}%
_{0}))=-\varepsilon\operatorname{div}r_{1}^{\varepsilon}%
\]
where
\[
r_{1}^{\varepsilon}(x)=\sum_{j=1}^{d}G_{j}\left(  \frac{x}{\varepsilon
}\right)  \nabla S_{\varepsilon}\left(  \frac{\partial\widetilde{u}_{0}%
}{\partial x_{j}}\right)  +H\left(  \frac{x}{\varepsilon}\right)  \nabla
S_{\varepsilon}(\widetilde{u}_{0}).
\]
Next we appeal to Lemma \ref{l2.5} and we use (\ref{21}) in conjunction with
(\ref{7.1}) [in Lemma \ref{l6.1}] to obtain
\begin{equation}
\left\Vert B_{1}^{\varepsilon}S_{\varepsilon}(\nabla\widetilde{u}%
_{0})\right\Vert _{L^{2}(\Omega)}\leq C\varepsilon\left\Vert u_{0}\right\Vert
_{H^{2}(\Omega)}\text{ and }\left\Vert a_{0,1}^{\varepsilon}S_{\varepsilon
}(\widetilde{u}_{0})\right\Vert _{L^{2}(\Omega)}\leq C\varepsilon\left\Vert
u_{0}\right\Vert _{H^{1}(\Omega)}. \label{22}%
\end{equation}
Dealing with the term $(\widehat{A}-A^{\varepsilon})(\nabla u_{0}%
-S_{\varepsilon}(\nabla\widetilde{u}_{0}))$ and the related ones corresponding
to other coefficients of equations (\ref{*10}) and (\ref{1.4'}), we make use
of (\ref{7.2}) in Lemma \ref{l6.1} to get
\begin{equation}%
\begin{array}
[c]{l}%
\left\Vert (\widehat{A}-A^{\varepsilon})(\nabla u_{0}-S_{\varepsilon}%
(\nabla\widetilde{u}_{0}))\right\Vert _{L^{2}(\Omega)}\leq C\varepsilon
\left\Vert u_{0}\right\Vert _{H^{2}(\Omega)}\\
\left\Vert (\widehat{V}-V^{\varepsilon})(u_{0}-S_{\varepsilon}(\widetilde
{u}_{0}))\right\Vert _{L^{2}(\Omega)}\leq C\varepsilon\left\Vert
u_{0}\right\Vert _{H^{1}(\Omega)}\\
\left\Vert (\widehat{B}-B^{\varepsilon})(\nabla u_{0}-S_{\varepsilon}%
(\nabla\widetilde{u}_{0}))\right\Vert _{L^{2}(\Omega)}\leq C\varepsilon
\left\Vert u_{0}\right\Vert _{H^{2}(\Omega)}\\
\left\Vert (\widehat{a}_{0}-a_{0}^{\varepsilon})(u_{0}-S_{\varepsilon
}(\widetilde{u}_{0}))\right\Vert _{L^{2}(\Omega)}\leq C\varepsilon\left\Vert
u_{0}\right\Vert _{H^{1}(\Omega)}.
\end{array}
\label{24}%
\end{equation}
Now, coming back to the equality in Lemma \ref{l6.2}, and using the inequality
(\ref{*11}) with $u_{\varepsilon}$ being replaced by $w_{\varepsilon}$, and
$f$ and $F$ replaced respectively by
\[%
\begin{array}
[c]{l}%
f=B_{1}^{\varepsilon}S_{\varepsilon}(\nabla\widetilde{u}_{0})+a_{0,1}%
^{\varepsilon}S_{\varepsilon}(\widetilde{u}_{0})+(\widehat{B}-B^{\varepsilon
})(\nabla u_{0}-S_{\varepsilon}(\nabla\widetilde{u}_{0}))\\
+(\widehat{a}_{0}-a_{0}^{\varepsilon})(u_{0}-S_{\varepsilon}(\widetilde{u}%
_{0}))-\varepsilon\lbrack\chi^{\varepsilon}B^{\varepsilon}\nabla
S_{\varepsilon}(\nabla\widetilde{u}_{0})+\chi_{0}^{\varepsilon}B^{\varepsilon
}\nabla S_{\varepsilon}(\widetilde{u}_{0})\\
+a_{0}^{\varepsilon}\chi^{\varepsilon}S_{\varepsilon}(\nabla\widetilde{u}%
_{0})+a_{0}^{\varepsilon}\chi_{0}^{\varepsilon}S_{\varepsilon}(\widetilde
{u}_{0})+\mu\chi^{\varepsilon}S_{\varepsilon}(\widetilde{u}_{0})+\mu\chi
_{0}^{\varepsilon}S_{\varepsilon}(\widetilde{u}_{0})].
\end{array}
\]
and
\[%
\begin{array}
[c]{l}%
F=-(\mathcal{A}_{0}^{\varepsilon}S_{\varepsilon}(\nabla\widetilde{u}%
_{0})+V_{1}^{\varepsilon}S_{\varepsilon}(\widetilde{u}_{0}))\\
-[(\widehat{A}-A^{\varepsilon})(\nabla u_{0}-S_{\varepsilon}(\nabla
\widetilde{u}_{0}))+(\widehat{V}-V^{\varepsilon})(u_{0}-S_{\varepsilon
}(\widetilde{u}_{0}))]\\
+\varepsilon\lbrack\chi^{\varepsilon}A^{\varepsilon}\nabla S_{\varepsilon
}(\nabla\widetilde{u}_{0})+\chi_{0}^{\varepsilon}\nabla S_{\varepsilon
}(\widetilde{u}_{0})+\chi^{\varepsilon}V^{\varepsilon}S_{\varepsilon}%
(\nabla\widetilde{u}_{0})+\chi_{0}^{\varepsilon}V^{\varepsilon}S_{\varepsilon
}(\widetilde{u}_{0})],
\end{array}
\]
and owing to inequalities (\ref{22}), (\ref{24}) and using once again property
(\ref{7.1}), we are led to (\ref{20}). This concludes the proof of the proposition.
\end{proof}

In order to obtain the $H^{1}$ rate of convergence, we need to estimate
$\left\Vert z_{\varepsilon}\right\Vert _{H^{1}(\Omega)}$ in terms of
$\varepsilon$. The next lemma provides us with such an estimate.

\begin{lemma}
\label{l6.5}Let $z_{\varepsilon}$ be defined by \emph{(\ref{5.3})}. There
exist $\varepsilon_{0}>0$ and $C=C(\alpha,\beta,\alpha_{0},\Omega)>0$ such
that
\begin{equation}
\left\Vert z_{\varepsilon}\right\Vert _{H^{1}(\Omega)}\leq C\varepsilon
^{\frac{1}{2}}\left\Vert u_{0}\right\Vert _{H^{2}(\Omega)}\text{ for all
}0<\varepsilon\leq\varepsilon_{0}. \label{27}%
\end{equation}

\end{lemma}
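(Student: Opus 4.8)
The plan is to reduce (\ref{27}) to a boundary-layer estimate for an $H^{1}$-extension of the Dirichlet datum of $z_{\varepsilon}$. Write $\phi_{\varepsilon}=\varepsilon\chi^{\varepsilon}S_{\varepsilon}(\nabla\widetilde{u}_{0})+\varepsilon\chi_{0}^{\varepsilon}S_{\varepsilon}(\widetilde{u}_{0})$, so that $z_{\varepsilon}=\phi_{\varepsilon}$ on $\partial\Omega$ and $z_{\varepsilon}\in H^{1}(\Omega)$ is well defined by (\ref{5.3}) since $\mu\geq\mu_{0}$. Fix a cut-off $\eta_{\varepsilon}\in\mathcal{C}^{\infty}(\overline{\Omega})$ with $\eta_{\varepsilon}=1$ on an $\varepsilon$-neighborhood of $\partial\Omega$, $\mathrm{supp}\,\eta_{\varepsilon}\subset\overline{\Omega_{2\varepsilon}}$ and $|\nabla\eta_{\varepsilon}|\leq C\varepsilon^{-1}$, and put $\Psi_{\varepsilon}=\eta_{\varepsilon}\phi_{\varepsilon}$. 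Then $z_{\varepsilon}-\Psi_{\varepsilon}\in H_{0}^{1}(\Omega)$ solves $\mathcal{P}_{\varepsilon}(z_{\varepsilon}-\Psi_{\varepsilon})=-\mathcal{P}_{\varepsilon}\Psi_{\varepsilon}$, and since the coefficients are bounded, $-\mathcal{P}_{\varepsilon}\Psi_{\varepsilon}$ has the form $f+\operatorname{div}F$ with $\|f\|_{L^{2}(\Omega)}+\|F\|_{L^{2}(\Omega)}\leq C\|\Psi_{\varepsilon}\|_{H^{1}(\Omega)}$; hence the a priori estimate (\ref{*11}) (valid because $\mu\geq\mu_{0}$) applied to $z_{\varepsilon}-\Psi_{\varepsilon}$ gives $\|z_{\varepsilon}\|_{H^{1}(\Omega)}\leq C\|\Psi_{\varepsilon}\|_{H^{1}(\Omega)}$. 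It therefore suffices to prove $\|\Psi_{\varepsilon}\|_{H^{1}(\Omega)}\leq C\varepsilon^{1/2}\|u_{0}\|_{H^{2}(\Omega)}$.

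For this I would invoke three ingredients: (a) by the extension estimate (\ref{7.0}), the $L^{2}$-boundedness of $S_{\varepsilon}$, and $\nabla\circ S_{\varepsilon}=S_{\varepsilon}\circ\nabla$, both $S_{\varepsilon}(\nabla\widetilde{u}_{0})$ and $S_{\varepsilon}(\widetilde{u}_{0})$ have $H^{1}(\mathbb{R}^{d})$-norm $\leq C\|u_{0}\|_{H^{2}(\Omega)}$, and $S_{\varepsilon}(\nabla^{2}\widetilde{u}_{0})$ has $L^{2}(\mathbb{R}^{d})$-norm $\leq C\|u_{0}\|_{H^{2}(\Omega)}$; (b) $\|\chi_{j}\|_{L^{\infty}(\mathbb{R}^{d})}\leq C$ for $0\leq j\leq d$, as in the proof of Lemma \ref{l5.2}; (c) Lemmas \ref{l5.2} and \ref{l5.4}. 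From (b), $\|\Psi_{\varepsilon}\|_{L^{2}(\Omega)}\leq\|\phi_{\varepsilon}\|_{L^{2}(\Omega)}\leq C\varepsilon\|u_{0}\|_{H^{2}(\Omega)}$. For the gradient, $\nabla\Psi_{\varepsilon}=(\nabla\eta_{\varepsilon})\phi_{\varepsilon}+\eta_{\varepsilon}\nabla\phi_{\varepsilon}$. In $(\nabla\eta_{\varepsilon})\phi_{\varepsilon}$ the factor $\varepsilon$ from $\phi_{\varepsilon}$ cancels the $\varepsilon^{-1}$ from $\nabla\eta_{\varepsilon}$, so by (b) this term is pointwise bounded by $C(|S_{\varepsilon}(\nabla\widetilde{u}_{0})|+|S_{\varepsilon}(\widetilde{u}_{0})|)$ and supported in $\Gamma_{2\varepsilon}$; applying Lemma \ref{l5.4} to $S_{\varepsilon}(\nabla\widetilde{u}_{0})$ and to $S_{\varepsilon}(\widetilde{u}_{0})$ and using (a) yields $\|(\nabla\eta_{\varepsilon})\phi_{\varepsilon}\|_{L^{2}(\Omega)}^{2}\leq C\varepsilon\|u_{0}\|_{H^{2}(\Omega)}^{2}$.

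The term $\eta_{\varepsilon}\nabla\phi_{\varepsilon}$ is the crux. Using $\varepsilon\nabla(\chi_{j}^{\varepsilon})=(\nabla_{y}\chi_{j})^{\varepsilon}$ and $\nabla\circ S_{\varepsilon}=S_{\varepsilon}\circ\nabla$ one has
\[
\nabla\phi_{\varepsilon}=(\nabla_{y}\chi)^{\varepsilon}S_{\varepsilon}(\nabla\widetilde{u}_{0})+(\nabla_{y}\chi_{0})^{\varepsilon}S_{\varepsilon}(\widetilde{u}_{0})+\varepsilon\chi^{\varepsilon}S_{\varepsilon}(\nabla^{2}\widetilde{u}_{0})+\varepsilon\chi_{0}^{\varepsilon}S_{\varepsilon}(\nabla\widetilde{u}_{0}),
\]
where the last two terms, by (b) and (a), have $L^{2}(\Omega)$-norm $\leq C\varepsilon\|u_{0}\|_{H^{2}(\Omega)}$. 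For the borderline terms $\eta_{\varepsilon}(\nabla_{y}\chi_{j})^{\varepsilon}S_{\varepsilon}(\partial_{j}\widetilde{u}_{0})$ (and $\eta_{\varepsilon}(\nabla_{y}\chi_{0})^{\varepsilon}S_{\varepsilon}(\widetilde{u}_{0})$) I would apply Lemma \ref{l5.2} with $w=\eta_{\varepsilon}S_{\varepsilon}(\partial_{j}\widetilde{u}_{0})$ (resp. $w=\eta_{\varepsilon}S_{\varepsilon}(\widetilde{u}_{0})$), getting the $L^{2}(\Omega)$-norm squared $\leq C\int_{\Omega}(|w|^{2}+\varepsilon^{2}|\nabla w|^{2})\,dx$. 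Here $|w|^{2}$ is supported in $\Gamma_{2\varepsilon}$ and controlled by Lemma \ref{l5.4} and (a); in $\nabla w$ the contribution $\varepsilon^{2}|\nabla\eta_{\varepsilon}|^{2}|S_{\varepsilon}(\partial_{j}\widetilde{u}_{0})|^{2}\leq C|S_{\varepsilon}(\partial_{j}\widetilde{u}_{0})|^{2}$ on $\Gamma_{2\varepsilon}$ is controlled the same way, while the remaining contribution $\varepsilon^{2}\eta_{\varepsilon}^{2}|S_{\varepsilon}(\nabla\partial_{j}\widetilde{u}_{0})|^{2}$ is crudely dominated by $\varepsilon^{2}\|S_{\varepsilon}(\nabla^{2}\widetilde{u}_{0})\|_{L^{2}(\mathbb{R}^{d})}^{2}\leq C\varepsilon^{2}\|u_{0}\|_{H^{2}(\Omega)}^{2}$. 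Summing over $j$, these terms contribute $\leq C\varepsilon\|u_{0}\|_{H^{2}(\Omega)}^{2}$ as well. Collecting all contributions gives $\|\Psi_{\varepsilon}\|_{H^{1}(\Omega)}\leq C\varepsilon^{1/2}\|u_{0}\|_{H^{2}(\Omega)}$ for all $\varepsilon$ with $2\varepsilon\leq\varepsilon_{0}$, $\varepsilon_{0}$ being the constant of Lemma \ref{l5.4}, and together with the first paragraph this proves (\ref{27}). The main obstacle is precisely this bookkeeping: one must ensure the $\varepsilon^{-1}$ growth of $\nabla\eta_{\varepsilon}$ is always paired either with the $\varepsilon$ in $\phi_{\varepsilon}$ or with the $\varepsilon^{2}$ on the right of Lemma \ref{l5.2}, and that each boundary-layer integral is estimated through Lemma \ref{l5.4} so as to produce the gain $\varepsilon^{1/2}$.
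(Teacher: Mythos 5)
Your proof is correct and follows essentially the same route as the paper's: subtract a cut-off copy of the boundary corrector so as to reduce to a homogeneous Dirichlet problem, apply the energy estimate, and then control the $H^{1}$-norm of the cut-off corrector by combining the $L^{\infty}$-bound on the $\chi_{j}$, Lemma \ref{l5.2}, and the boundary-layer estimate of Lemma \ref{l5.4}, which supplies the $\varepsilon^{1/2}$ gain. The only (harmless, in fact slightly cleaner) difference is that you keep the smoothing operator $S_{\varepsilon}$ inside the subtracted function, so that $z_{\varepsilon}-\Psi_{\varepsilon}$ genuinely lies in $H_{0}^{1}(\Omega)$, whereas the paper's $\phi_{\varepsilon}$ is built from $\nabla u_{0}$ and $u_{0}$ directly.
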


\begin{proof}
Let $\varepsilon_{0}$ be given by Lemma \ref{l5.4}, and fix $\varepsilon>0$
such that $\varepsilon\leq\varepsilon_{0}$. Let $\theta_{\varepsilon}$ be a
cut-off function in a neighborhood of $\partial\Omega$ with support in
$\Omega_{2\varepsilon}$ (a $2\varepsilon$-neighborhood of $\partial\Omega$),
$\Omega_{\varepsilon}$ being defined as in Lemma \ref{l5.4}:
\begin{equation}
\theta_{\varepsilon}\in\mathcal{C}_{0}^{\infty}(\mathbb{R}^{d})\text{,
\textrm{supp}}\theta_{\varepsilon}\subset\Omega_{2\varepsilon}\text{, }%
0\leq\theta_{\varepsilon}\leq1\text{, }\theta_{\varepsilon}=1\text{ on }%
\Omega_{\varepsilon}\text{, }\theta_{\varepsilon}=0\text{ on }\mathbb{R}%
^{d}\backslash\Omega_{2\varepsilon}\text{ and }\varepsilon\left\vert
\nabla\theta_{\varepsilon}\right\vert \leq C. \label{5.35}%
\end{equation}
Define
\begin{equation}
\phi_{\varepsilon}=\varepsilon\theta_{\varepsilon}(\chi^{\varepsilon}\nabla
u_{0}+\chi_{0}^{\varepsilon}u_{0}) \label{5.36}%
\end{equation}
and set $v_{\varepsilon}=z_{\varepsilon}-\phi_{\varepsilon}$. Then obviously
$\phi_{\varepsilon}\in H^{1}(\Omega)$ and $\mathcal{P}_{\varepsilon
}v_{\varepsilon}=-\mathcal{P}_{\varepsilon}\phi_{\varepsilon}$ in $\Omega$,
$v_{\varepsilon}=0$ on $\partial\Omega$, \ so that
\[
\left\Vert v_{\varepsilon}\right\Vert _{H^{1}(\Omega)}\leq C(\left\Vert
A^{\varepsilon}\nabla\phi_{\varepsilon}+V^{\varepsilon}\phi_{\varepsilon
}\right\Vert _{L^{2}(\Omega)}+\left\Vert B^{\varepsilon}\nabla\phi
_{\varepsilon}\right\Vert _{L^{2}(\Omega)}+\left\Vert a_{0}^{\varepsilon}%
\phi_{\varepsilon}\right\Vert _{L^{2}(\Omega)}).
\]
From the above inequality we infer that
\[
\left\Vert z_{\varepsilon}\right\Vert _{H^{1}(\Omega)}\leq\left\Vert
v_{\varepsilon}\right\Vert _{H^{1}(\Omega)}+\left\Vert \phi_{\varepsilon
}\right\Vert _{H^{1}(\Omega)}\leq C(\left\Vert \nabla\phi_{\varepsilon
}\right\Vert _{L^{2}(\Omega)}+\left\Vert \phi_{\varepsilon}\right\Vert
_{L^{2}(\Omega)})
\]
where the constant $C$ depends only on $\alpha$, $\beta$ and $\alpha_{0}$. It
remains to estimate the right-hand side of the last inequality above. To that
end, we have
\begin{align*}
\nabla\phi_{\varepsilon}  &  =\varepsilon\chi^{\varepsilon}\nabla u_{0}%
\nabla\theta_{\varepsilon}+(\nabla_{y}\chi)^{\varepsilon}\nabla u_{0}%
\theta_{\varepsilon}+\varepsilon\chi^{\varepsilon}\theta_{\varepsilon}%
\nabla^{2}u_{0}+\varepsilon\chi_{0}^{\varepsilon}u_{0}\nabla\theta
_{\varepsilon}+(\nabla_{y}\chi_{0})^{\varepsilon}u_{0}\theta_{\varepsilon
}+\varepsilon\chi_{0}^{\varepsilon}\theta_{\varepsilon}\nabla u_{0}\\
&  =\varepsilon(\chi^{\varepsilon}\nabla u_{0}+\chi_{0}^{\varepsilon}%
u_{0})\nabla\theta_{\varepsilon}+((\nabla_{y}\chi)^{\varepsilon}\nabla
u_{0}+(\nabla_{y}\chi_{0})^{\varepsilon}u_{0})\theta_{\varepsilon}%
+\varepsilon(\chi^{\varepsilon}\nabla^{2}u_{0}+\chi_{0}^{\varepsilon}\nabla
u_{0})\theta_{\varepsilon}\\
&  =J_{1}+J_{2}+J_{3}\text{.}%
\end{align*}

As for $J_{1}$, we have,
\begin{align}
\left\Vert J_{1}\right\Vert _{L^{2}(\Omega)}^{2}  &  \leq C\int_{\Gamma
_{2\varepsilon}}(\left\vert \nabla u_{0}\right\vert ^{2}+\left\vert
u_{0}\right\vert ^{2})dx\label{25}\\
&  \leq C\varepsilon\left(  \left\Vert \nabla u_{0}\right\Vert _{L^{2}%
(\Omega)}\left\Vert \nabla u_{0}\right\Vert _{H^{1}(\Omega)}+\left\Vert
u_{0}\right\Vert _{L^{2}(\Omega)}\left\Vert u_{0}\right\Vert _{H^{1}(\Omega
)}\right) \nonumber\\
&  \leq C\varepsilon\left\Vert u_{0}\right\Vert _{H^{2}(\Omega)}^{2}\nonumber
\end{align}
where above in (\ref{25}) we have used the boundedness of $\chi_{j}$
($\left\Vert \chi_{j}\right\Vert _{L^{\infty}(\mathbb{R}^{d})}\leq C$ for all
$0\leq j\leq d$) and the inequality $\left\vert \nabla\theta_{\varepsilon
}\right\vert \leq C\varepsilon^{-1}$ stemming from (\ref{5.35}), to obtain the
first inequality, and (\ref{5.34}) to get the second one.

Dealing with $J_{2}$, we have
\begin{align}
\left\Vert J_{2}\right\Vert _{L^{2}(\Omega)}^{2}  &  \leq C\int_{\Omega
}\left(  \left\vert (\nabla_{y}\chi)^{\varepsilon}\nabla u_{0}\theta
_{\varepsilon}\right\vert ^{2}+\left\vert (\nabla_{y}\chi_{0})^{\varepsilon
}u_{0}\theta_{\varepsilon}\right\vert ^{2}\right)  dx\label{26}\\
&  \leq C\int_{\Omega}\left(  \left\vert \theta_{\varepsilon}\nabla
u_{0}\right\vert ^{2}+\varepsilon^{2}\left\vert \nabla(\theta_{\varepsilon
}\nabla u_{0})\right\vert ^{2}\right)  dx+C\int_{\Omega}\left(  \left\vert
\theta_{\varepsilon}u_{0}\right\vert ^{2}+\varepsilon^{2}\left\vert
\nabla(\theta_{\varepsilon}u_{0})\right\vert ^{2}\right)  dx\nonumber
\end{align}
where we have used (\ref{5.30}) (in Lemma \ref{l5.2}) to obtain the second
inequality in (\ref{26}). Letting $w=u_{0}$ or $\nabla u_{0}$, we have
$\nabla(w\theta_{\varepsilon})=w\nabla\theta_{\varepsilon}+\theta
_{\varepsilon}\nabla w$, and
\begin{align*}
\int_{\Omega}\left\vert \nabla(w\theta_{\varepsilon})\right\vert ^{2}dx  &
\leq C\int_{\Gamma_{2\varepsilon}}\left\vert \nabla\theta_{\varepsilon
}\right\vert ^{2}\left\vert w\right\vert ^{2}dx+C\int_{\Omega}\left\vert
\theta_{\varepsilon}\nabla w\right\vert ^{2}dx\\
&  \leq C\varepsilon^{-1}\left\Vert w\right\Vert _{L^{2}(\Omega)}\left\Vert
w\right\Vert _{H^{1}(\Omega)}+C\int_{\Omega}\left\vert \nabla w\right\vert
^{2}dx.
\end{align*}
Thus
\[
\left\Vert J_{2}\right\Vert _{L^{2}(\Omega)}^{2}\leq C(\varepsilon
+\varepsilon^{2})\left\Vert u_{0}\right\Vert _{H^{2}(\Omega)}^{2}.
\]

Concerning $J_{3}$, we use once again the boundedness of the correctors to
get
\[
\left\Vert J_{3}\right\Vert _{L^{2}(\Omega)}^{2}\leq C\varepsilon
^{2}\left\Vert u_{0}\right\Vert _{H^{2}(\Omega)}^{2}.
\]
It follows that
\begin{equation}
\left\Vert \nabla\phi_{\varepsilon}\right\Vert _{L^{2}(\Omega)}\leq
C\varepsilon^{\frac{1}{2}}\left\Vert u_{0}\right\Vert _{H^{2}(\Omega)}.
\label{eq01}%
\end{equation}
Now, using the obvious inequality
\begin{equation}
\left\Vert \phi_{\varepsilon}\right\Vert _{L^{2}(\Omega)}\leq C\varepsilon
^{\frac{1}{2}}\left\Vert u_{0}\right\Vert _{H^{2}(\Omega)}, \label{eq02}%
\end{equation}
we are led to $\left\Vert z_{\varepsilon}\right\Vert _{H^{1}(\Omega)}\leq
C\varepsilon^{\frac{1}{2}}\left\Vert u_{0}\right\Vert _{H^{2}(\Omega)}$, which
completes the proof.
\end{proof}

Putting together (\ref{20}) and (\ref{27}) associated to the inequality
\[
\left\Vert u_{\varepsilon}-u_{0}-\varepsilon\chi^{\varepsilon}S_{\varepsilon
}(\nabla\widetilde{u}_{0})-\varepsilon\chi_{0}^{\varepsilon}S_{\varepsilon
}(\widetilde{u}_{0})\right\Vert _{H^{1}(\Omega)}\leq C\varepsilon\left\Vert
u_{0}\right\Vert _{H^{2}(\Omega)}+\left\Vert z_{\varepsilon}\right\Vert
_{H^{1}(\Omega)},
\]
we see that we have proven the $H^{1}$-rate of convergence stated in the next result.

\begin{theorem}
\label{t5.1}Let $\Omega$ be a $\mathcal{C}^{1,1}$ bounded domain in
$\mathbb{R}^{d}$. Suppose \emph{(\ref{1.2})}, \emph{(\ref{2.1})},
\emph{(\ref{2.2'})} and \emph{(\ref{2.2})} hold. Let $u_{\varepsilon}$ and
$u_{0}$ be the weak solutions of \emph{(\ref{*10})} and \emph{(\ref{1.4'})}
respectively. Under the further assumption that $u_{0}\in H^{2}(\Omega)$,
there exists $C=C(d,\Omega,\alpha,\beta,\alpha_{0})>0$ such that
\begin{equation}
\left\Vert u_{\varepsilon}-u_{0}-\varepsilon\chi^{\varepsilon}S_{\varepsilon
}(\nabla\widetilde{u}_{0})-\varepsilon\chi_{0}^{\varepsilon}S_{\varepsilon
}(\widetilde{u}_{0})\right\Vert _{H^{1}(\Omega)}\leq C\varepsilon^{\frac{1}%
{2}}\left\Vert u_{0}\right\Vert _{H^{2}(\Omega)}. \label{28}%
\end{equation}

\end{theorem}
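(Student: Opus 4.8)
The plan is to obtain \eqref{28} by merely assembling the two estimates already in hand — \eqref{20} from Proposition \ref{p6.2} and \eqref{27} from Lemma \ref{l6.5} — via the triangle inequality. Recalling the definition of $w_\varepsilon$ introduced just before Lemma \ref{l6.2}, one has, with $z_\varepsilon\in H^1(\Omega)$ the auxiliary function fixed by \eqref{5.3},
\[
u_\varepsilon-u_0-\varepsilon\chi^\varepsilon S_\varepsilon(\nabla\widetilde u_0)-\varepsilon\chi_0^\varepsilon S_\varepsilon(\widetilde u_0)=w_\varepsilon-z_\varepsilon .
\]
First I would take the $H^1(\Omega)$-norm of this identity and use the triangle inequality to get
\[
\left\Vert u_\varepsilon-u_0-\varepsilon\chi^\varepsilon S_\varepsilon(\nabla\widetilde u_0)-\varepsilon\chi_0^\varepsilon S_\varepsilon(\widetilde u_0)\right\Vert_{H^1(\Omega)}\le\left\Vert w_\varepsilon\right\Vert_{H^1(\Omega)}+\left\Vert z_\varepsilon\right\Vert_{H^1(\Omega)},
\]
which reduces the claim to the two already-proven bounds.

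Next I would substitute Proposition \ref{p6.2}, which gives $\left\Vert w_\varepsilon\right\Vert_{H^1(\Omega)}\le C\varepsilon\left\Vert u_0\right\Vert_{H^2(\Omega)}$, and Lemma \ref{l6.5}, which gives $\left\Vert z_\varepsilon\right\Vert_{H^1(\Omega)}\le C\varepsilon^{1/2}\left\Vert u_0\right\Vert_{H^2(\Omega)}$ for $0<\varepsilon\le\varepsilon_0$, where $\varepsilon_0\in(0,1]$ is the threshold supplied by Lemma \ref{l5.4} and inherited by Lemma \ref{l6.5}. Since $\varepsilon\le\varepsilon^{1/2}$ on that range, the $\varepsilon$-term is absorbed into the $\varepsilon^{1/2}$-term, and \eqref{28} follows on $0<\varepsilon\le\varepsilon_0$ with a constant $C=C(d,\Omega,\alpha,\beta,\alpha_0)$ of the announced form — which is the relevant regime, small $\varepsilon$ being what matters (on any fixed range bounded away from $0$ the left-hand side is anyway controlled by $C\left\Vert u_0\right\Vert_{H^2(\Omega)}$ via \eqref{*11}, the $L^\infty$-bound on the correctors used in the proof of Lemma \ref{l5.2}, and \eqref{7.1}, so the constant can be enlarged if one insists on all $\varepsilon>0$).

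I do not expect any genuine obstacle at the level of Theorem \ref{t5.1} itself: all the analytic work already sits in the preceding results, and the present argument is pure bookkeeping. The step that actually governs the resulting rate — and the one place where it degrades from $\varepsilon$ to $\varepsilon^{1/2}$ — is the boundary-layer estimate \eqref{27} of Lemma \ref{l6.5}; there the loss of the half power is forced by the trace-type inequality \eqref{5.34} of Lemma \ref{l5.4} on the tubular neighbourhood $\Gamma_{2\varepsilon}$ of $\partial\Omega$, together with the cut-off construction \eqref{5.35}--\eqref{5.36}. By contrast, the interior estimate \eqref{20} retains the full rate $\varepsilon$ thanks to the smoothing operator $S_\varepsilon$, the skew-symmetric flux-corrector representation of the divergence-free, mean-zero matrices $\mathcal{A}_0,V_1,B_1,a_{0,1}$, and Lemma \ref{l2.5}. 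Hence the difficulty, such as it is, lies entirely upstream of this theorem.
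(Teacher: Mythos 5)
Your proof is correct and is essentially identical to the paper's: the authors also write the left-hand side of \eqref{28} as $w_{\varepsilon}-z_{\varepsilon}$, apply the triangle inequality, and combine \eqref{20} with \eqref{27}, absorbing the $\varepsilon$-term into the $\varepsilon^{1/2}$-term. Your additional remarks on the restriction $0<\varepsilon\leq\varepsilon_{0}$ and on where the half-power loss originates are accurate but not needed for the argument.
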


\subsection{Proof of Theorem \ref{t2.1}}

Our aim in this section is to prove estimate (\ref{5.9}). To achieve that, we
proceed like in \cite{Zhikov2} to first show that
\begin{equation}
\left\Vert u_{\varepsilon}\right\Vert _{H^{1}(\Gamma_{2\varepsilon})}\leq
C\varepsilon^{\frac{1}{2}}\left\Vert f\right\Vert _{L^{2}(\Omega)} \label{**}%
\end{equation}
where $\Gamma_{2\varepsilon}=\{x\in\Omega:\mathrm{dist}(x,\partial
\Omega)<2\varepsilon\}$. Indeed, writing $u_{\varepsilon}$ as
\[
u_{\varepsilon}=(u_{\varepsilon}-u_{0}-\varepsilon\chi^{\varepsilon
}S_{\varepsilon}(\nabla\widetilde{u}_{0})-\varepsilon\chi_{0}^{\varepsilon
}S_{\varepsilon}(\widetilde{u}_{0}))+u_{0}+\varepsilon\chi^{\varepsilon
}S_{\varepsilon}(\nabla\widetilde{u}_{0})+\varepsilon\chi_{0}^{\varepsilon
}S_{\varepsilon}(\widetilde{u}_{0}),
\]
we see that
\begin{align}
\left\Vert u_{\varepsilon}\right\Vert _{H^{1}(\Gamma_{2\varepsilon})}  &
\leq\left\Vert u_{\varepsilon}-u_{0}-\varepsilon\chi^{\varepsilon
}S_{\varepsilon}(\nabla\widetilde{u}_{0})-\varepsilon\chi_{0}^{\varepsilon
}S_{\varepsilon}(\widetilde{u}_{0})\right\Vert _{H^{1}(\Gamma_{2\varepsilon}%
)}+\left\Vert u_{0}\right\Vert _{H^{1}(\Gamma_{2\varepsilon})}\label{29}\\
&  +\left\Vert \varepsilon\chi^{\varepsilon}S_{\varepsilon}(\nabla
\widetilde{u}_{0})\right\Vert _{H^{1}(\Gamma_{2\varepsilon})}+\left\Vert
\varepsilon\chi_{0}^{\varepsilon}S_{\varepsilon}(\widetilde{u}_{0})\right\Vert
_{H^{1}(\Gamma_{2\varepsilon})}.\nonumber
\end{align}
First of all, since $\Omega$ is a $\mathcal{C}^{1,1}$ bounded domain in
$\mathbb{R}^{d}$ and the coefficients of the homogenized operator
$\mathcal{P}_{0}=-\operatorname{div}(\widehat{A}\nabla+\widehat{V}%
)+\widehat{B}\cdot\nabla+\widehat{a}_{0}+\mu$ are constant, it follows from
\cite[Theorem 4, pp. 334-335]{Evans} that
\begin{equation}
\left\Vert u_{0}\right\Vert _{H^{2}(\Omega)}\leq C\left\Vert f\right\Vert
_{L^{2}(\Omega)}. \label{eq0}%
\end{equation}
It follows from (\ref{28}) and (\ref{eq0}) that
\[
\left\Vert u_{\varepsilon}-u_{0}-\varepsilon\chi^{\varepsilon}S_{\varepsilon
}(\nabla\widetilde{u}_{0})-\varepsilon\chi_{0}^{\varepsilon}S_{\varepsilon
}(\widetilde{u}_{0})\right\Vert _{H^{1}(\Gamma_{2\varepsilon})}\leq
C\varepsilon^{\frac{1}{2}}\left\Vert f\right\Vert _{L^{2}(\Omega)}.
\]
For the other terms involved in the right-hand side of (\ref{29}), we may
proceed as in \cite[Lemma 3.3]{Zhikov2} to derive (\ref{**}).

This being so, the theorem will be proven once we will check the estimate
\begin{equation}
\left\Vert z_{\varepsilon}\right\Vert _{L^{2}(\Omega)}\leq C\varepsilon
\left\Vert f\right\Vert _{L^{2}(\Omega)}. \label{eq1}%
\end{equation}
Indeed, first note that (\ref{20}) implies
\[
\left\Vert u_{\varepsilon}-u_{0}-\varepsilon\chi^{\varepsilon}S_{\varepsilon
}(\nabla\widetilde{u}_{0})-\varepsilon\chi_{0}^{\varepsilon}S_{\varepsilon
}(\widetilde{u}_{0})+z_{\varepsilon}\right\Vert _{L^{2}(\Omega)}\leq
C\varepsilon\left\Vert f\right\Vert _{L^{2}(\Omega)}.
\]
Therefore it is clear from the series of inequalities
\[
\left\Vert \varepsilon\chi^{\varepsilon}S_{\varepsilon}(\nabla\widetilde
{u}_{0})+\varepsilon\chi_{0}^{\varepsilon}S_{\varepsilon}(\widetilde{u}%
_{0})\right\Vert _{L^{2}(\Omega)}\leq C\varepsilon\left\Vert u_{0}\right\Vert
_{H^{2}(\Omega)}\leq C\varepsilon\left\Vert f\right\Vert _{L^{2}(\Omega)}%
\]
that proving (\ref{eq1}) implies (\ref{5.9}). So, we concentrate on the proof
of (\ref{eq1}).

We follow an argument developed above in the proof of Lemma \ref{l6.5} by
setting $v_{\varepsilon}=z_{\varepsilon}-\phi_{\varepsilon}$ where
$z_{\varepsilon}$ and $\phi_{\varepsilon}$ are defined by (\ref{5.3}) and
(\ref{5.36}) respectively. Then $v_{\varepsilon}\in H_{0}^{1}(\Omega)$ and
$\mathcal{P}_{\varepsilon}v_{\varepsilon}=-\mathcal{P}_{\varepsilon}%
\phi_{\varepsilon}$ in $\Omega$. We know from (\ref{eq01}), (\ref{eq02}) and
(\ref{eq0}) that
\begin{equation}
\left\Vert \nabla\phi_{\varepsilon}\right\Vert _{L^{2}(\Omega)}\leq
C\varepsilon^{\frac{1}{2}}\left\Vert f\right\Vert _{L^{2}(\Omega
)},\ \ \left\Vert \phi_{\varepsilon}\right\Vert _{L^{2}(\Omega)}\leq
C\varepsilon\left\Vert f\right\Vert _{L^{2}(\Omega)}.\label{eq03}%
\end{equation}
Next, let $F\in L^{2}(\Omega)$ and let $t_{\varepsilon}\in H_{0}^{1}(\Omega)$
be the unique solution (for each $\varepsilon>0$) of
\begin{equation}
\mathcal{P}_{\varepsilon}^{\ast}t_{\varepsilon}=F\text{ in }\Omega\label{eq04}%
\end{equation}
where $\mathcal{P}_{\varepsilon}^{\ast}$ is the adjoint of $\mathcal{P}%
_{\varepsilon}$ ($\mathcal{P}_{\varepsilon}^{\ast}$ is obtained from
$\mathcal{P}_{\varepsilon}$ by interchanging $V^{\varepsilon}$ and
$B^{\varepsilon}$: $\mathcal{P}_{\varepsilon}^{\ast}=-\operatorname{div}%
(A^{\varepsilon}\nabla+B^{\varepsilon})+V^{\varepsilon}\nabla+a_{0}%
^{\varepsilon}+\mu$). Since $\mathcal{P}_{\varepsilon}^{\ast}$ has a similar
form as $\mathcal{P}_{\varepsilon}$, we infer the existence of $t_{0}\in
H_{0}^{1}(\Omega)$ such that $t_{\varepsilon}\rightarrow t_{0}$ in $H_{0}%
^{1}(\Omega)$-weak (as $\varepsilon\rightarrow0$), where $t_{0}$ solves the
equation $\mathcal{P}_{0}^{\ast}t_{0}=F$ in $\Omega$. Repeating the
homogenization process as before, we are able to show that (see (\ref{**}))
\begin{equation}
\left\Vert \nabla t_{\varepsilon}\right\Vert _{H^{1}(\Gamma_{2\varepsilon}%
)}\leq C\varepsilon^{\frac{1}{2}}\left\Vert F\right\Vert _{L^{2}(\Omega
)}.\label{eq05}%
\end{equation}
Now, it is a fact that proving (\ref{eq1}) amounts in checking that
\begin{equation}
\left\Vert v_{\varepsilon}\right\Vert _{L^{2}(\Omega)}\leq C\varepsilon
\left\Vert f\right\Vert _{L^{2}(\Omega)}\label{eq06}%
\end{equation}
because of the second inequality in (\ref{eq03}). So, to prove (\ref{eq06}) we
write
\[
\int_{\Omega}Fv_{\varepsilon}dx=\left(  \mathcal{P}_{\varepsilon}^{\ast
}t_{\varepsilon},v_{\varepsilon}\right)  =\left(  t_{\varepsilon}%
,\mathcal{P}_{\varepsilon}v_{\varepsilon}\right)  =-(t_{\varepsilon
},\mathcal{P}_{\varepsilon}\phi_{\varepsilon})=-(\mathcal{P}_{\varepsilon
}^{\ast}t_{\varepsilon},\phi_{\varepsilon})=-\mathcal{B}_{\varepsilon,\Omega
}(\phi_{\varepsilon},t_{\varepsilon})
\]
where $\mathcal{B}_{\varepsilon,\Omega}(\cdot,\cdot)$ is defined by
(\ref{*0}). But \textrm{supp}$\phi_{\varepsilon}\subset$ \textrm{supp}%
$\theta_{2\varepsilon}\subset\Omega_{2\varepsilon}=\{x\in\mathbb{R}%
^{d}:\mathrm{dist}(x,\partial\Omega)<2\varepsilon\}$, so that
\begin{align*}
\int_{\Omega}Fv_{\varepsilon}dx &  =-\mathcal{B}_{\varepsilon,\Gamma
_{2\varepsilon}}(\phi_{\varepsilon},t_{\varepsilon})\\
&  =-\int_{\Gamma_{2\varepsilon}}(A^{\varepsilon}\nabla\phi_{\varepsilon}%
\cdot\nabla t_{\varepsilon}+\phi_{\varepsilon}V^{\varepsilon}\cdot\nabla
t_{\varepsilon}+(B^{\varepsilon}\cdot\nabla\phi_{\varepsilon})t_{\varepsilon
}+(a_{0}^{\varepsilon}+\mu)\phi_{\varepsilon}t_{\varepsilon})dx.
\end{align*}
It follows from the continuity of $\mathcal{B}_{\varepsilon,\Gamma
_{2\varepsilon}}$ that
\begin{equation}
\left\vert \int_{\Omega}Fv_{\varepsilon}dx\right\vert \leq C\left\Vert
\nabla\phi_{\varepsilon}\right\Vert _{L^{2}(\Omega)}\left\Vert \nabla
t_{\varepsilon}\right\Vert _{L^{2}(\Gamma_{2\varepsilon})}\leq C\varepsilon
\left\Vert f\right\Vert _{L^{2}(\Omega)}\left\Vert F\right\Vert _{L^{2}%
(\Omega)}.\label{eq07}%
\end{equation}
Since $F$ is arbitrary in (\ref{eq07}), this leads at once to (\ref{eq06}).
This concludes the proof of Theorem \ref{t2.1}.

\begin{acknowledgement}
\emph{The work of J.L. Woukeng has been carried out under the support of the
Alexander von Humboldt Foundation. He gratefully acknowledge the Foundation. G.C. is a member of GNAMPA (INDAM).}
\end{acknowledgement}


\begin{thebibliography}{99}                                                                                               %


\bibitem {Adams}R.A. Adams, J.J.F. Fournier, \emph{Sobolev spaces},
$2^{\text{\textit{nd}}}$ Edition, Pure and Applied Mathematics, Academic
Press, 2003.

\bibitem {Besicovitch}A.S. Besicovitch, \emph{Almost periodic functions},
Cambridge, Dover Publications, 1954.

\bibitem {Lebris}X. Blanc, C. Le Bris, P.-L. Lions, Local profiles for
elliptic problems at different scales: defects in, and interfaces between
periodic structures, Comm. Partial Differential Equations \textbf{40} (2015) 2173--2236.

\bibitem {Blanc2018}X. Blanc, C. Le Bris, P.-L. Lions, On correctors for
linear elliptic homogenization in the presence of local defects, Comm. Partial
Differential Equations \textbf{43} (2018) 965--997.

\bibitem {Blanc2019}X. Blanc, M. Josien, C. Le Bris, Approximation locale
pr\'{e}cis\'{e}e dans des probl\`{e}mes multi-\'{e}chelles avec d\'{e}fauts
localis\'{e}s, C. R. Math. Acad. Sci. Paris \textbf{357} (2019) 167--174.

\bibitem {BlancAA} X. Blanc, M. Josien, C. Le Bris, Precised approximations in elliptic homogenization beyond the periodic setting, Asympt. Anal. (in press), DOI: 10.3233/ASY-191537.

\bibitem {Bohr}H. Bohr, \emph{Almost periodic functions}, Chelsea, New York, 1947.

\bibitem {CPZ}G. Cardone, S.E. Pastukhova, V.V. Zhikov, Some estimates for
nonlinear homogenization, Rend. Accad. Naz. Sci. XL Mem. Mat. Appl. (5)
\textbf{29} (2005), 101\DH 110.

\bibitem {Casado}J. Casado Diaz, I. Gayte, The two-scale convergence method
applied to generalized Besicovitch spaces, Proc. R. Soc. Lond. A \textbf{458}
(2002) 2925--2946.

\bibitem {De Rham}G. De Rham, \emph{Vari\'{e}t\'{e}s diff\'{e}rentiables},
Hermann, 1960.

\bibitem {Evans}L.C. Evans, \emph{Partial differential equations}, Second
edition. Graduate Studies in Mathematics, 19. American Mathematical Society,
Providence, RI, 2010. xxii+749 pp.

\bibitem {Giaquinta}M. Giaquinta, \emph{Multiple integrals in the calculus of
variations and nonlinear elliptic systems}, Annals of Math. Studies 105,
Princeton Univ. Press, Princeton, NJ, 1983.

\bibitem {Jacobs}K. Jacobs, \emph{Measure and Integral}, Academic Press, 1978.

\bibitem {JTW}W. J\"{a}ger, A. Tambue, J.L. Woukeng, Approximation of
homogenized coefficients in deterministic homogenization and convergence rates
in the asymptotic almost periodic setting. \textit{Preprint}, 2019, arXiv: 1906.11501.

\bibitem {Jikov}V.V. Jikov, S.M. Kozlov, O.A. Oleinik, \emph{Homogenization of
differential operators and integral functionals}, Springer-Verlag, Berlin, 1994.

\bibitem {LU1968}O.A. Ladyzhenskaya, N.N. Uraltseva, \emph{Linear and
quasilinear equations of the elliptic type}, \textquotedblleft
Nauka\textquotedblright, Moscow, 1964; English transl., Acad. Press, New
York--London, 1968.

\bibitem {CMP}G. Nguetseng, M. Sango, J.L. Woukeng, Reiterated ergodic
algebras and applications, Commun. Math. Phys \textbf{300} (2010) 835--876.

\bibitem {EJDE2014}G. Nguetseng, R.E. Showalter, J.L. Woukeng, Diffusion of a
single-phase fluid through a general deterministic partially-fissured medium,
Electron. J. Differ. Equ. \textbf{164} (2014) 1--26.

\bibitem {Novotny}A. Novotn\'{y}, I. Stra\v{s}kraba, \emph{Introduction to the
mathematical theory of compressible flow}, Oxford Univ. Press, 2004.

\bibitem {Suslina}M. A. Pakhnin, A. Suslina, Operator error estimates for
homogenization of the elliptic Dirichlet problem in a bounded domain, St.
Petersburg Math. J. \textbf{24} (2013) 949--976.

\bibitem {Po-Yu89}A.V. Pozhidaev, V.V. Yurinskii, On the error of averaging of
symmetric elliptic systems, Izv. Akad. Nauk SSSR Ser. Mat. \textbf{53} (1989)
851--867. In Russian; translated in Math. USSR Izv. \textbf{35} (1990) 183--201.

\bibitem {NA}M. Sango, N. Svanstedt, J.L. Woukeng, Generalized Besicovitch
spaces and application to deterministic homogenization, Nonlin. Anal. TMA
\textbf{74} (2011) 351--379.

\bibitem {LS}L. Schwartz, \emph{Th\'{e}orie des distributions}, Hermann,
Paris, 1966.

\bibitem {Shen2}Z. Shen, Boundary estimates in elliptic homogenization, Anal.
PDE \textbf{10} (2017) 653--694.

\bibitem {Simon}J. Simon, D\'{e}monstration constructive d'un th\'{e}or\`{e}me
de G. de Rham, C. R. Acad. Sci. Paris \textbf{316} (1996) 1167--1172.

\bibitem {BJMA}J.L. Woukeng, Homogenization in algebras with mean value,
Banach J. Math. Anal. \textbf{9} (2015) 142--182.

\bibitem {ACAP}J.L. Woukeng, Homogenization of nonlinear degenerate
non-monotone elliptic operators in domains perforated with tiny holes, Acta
Appl. Math. \textbf{112} (2010) 35--68.

\bibitem {Xu1}Q. Xu, Uniform regularity estimates in homogenization theory of
elliptic system with lower order terms, J. Math. Anal. Appl. \textbf{438}
(2016) 1066--1107.

\bibitem {Xu2}Q. Xu, Convergence rates for general elliptic homogenization
problems in a bounded Lipschitz domain, SIAM J. Math. Anal. \textbf{48} (2016) 3742--3788.

\bibitem {Zhikov2}V.V. Zhikov, S.E. Pastukhova, Operator estimates in
homogenization theory, Russian Math. Surveys \textbf{71} (2016) 417--511.

\bibitem {Zhikov1}V.V. Zhikov, S.E. Pastukhova, On operator estimates for some
problems in homogenization theory, Russian J. Math. Phys. \textbf{12} (2005) 515--524.
\end{thebibliography}
\end{document}